\title{Universal sets for projections}
\author{Jacob B. Fiedler}
\address{Department of Mathematics, University of Wisconsin, Madison, Wisconsin 53715}
\email{jbfiedler2@wisc.edu}
\thanks{The first author was supported in part by NSF DMS-2037851 and NSF DMS-2246906.}
\author{D. M. Stull}
\address{Department of Mathematics, University of Chicago, Chicago, IL 60637}
\email{dmstull@uchicago.edu}
\subjclass[2020]{28A78, 28A80, 68Q30}
\newtheorem{thm}{Theorem}
\newtheorem{lem}[thm]{Lemma}
\newtheorem{prop}[thm]{Proposition}
\newtheorem{cor}[thm]{Corollary}
\theoremstyle{remark}
\newtheorem*{remark}{Remark}
\DeclareMathOperator{\Dim}{Dim}
\DeclareMathOperator{\dimH}{dim_H}
\newcommand{\R}{\mathbb{R}}
\newcommand{\N}{\mathbb{N}}
\newcommand{\Q}{\mathbb{Q}}
\newcommand{\ve}{\varepsilon}
\newcommand{\ML}{\text{ML}}
\begin{document}

\begin{abstract}
    We investigate variants of Marstrand's projection theorem that hold for sets of directions and classes of sets in $\mathbb{R}^2$. We say that a set of directions $D \subseteq\mathcal{S}^1$ is \emph{universal} for a class of sets if, for every set $E$ in the class, there is a direction $e\in D$ such that the projection of $E$ in the direction $e$ has maximal Hausdorff dimension. We construct small universal sets for certain classes. Particular attention is paid to the role of regularity. We prove the existence of universal sets with arbitrarily small positive Hausdorff dimension for the class of weakly regular sets. We prove that there is a universal set of zero Hausdorff dimension for the class of AD-regular sets. 
\end{abstract}

\maketitle
\section{Introduction}
Marstrand's projection theorem is one of the most prominent results in geometric measure theory. It states that for analytic $E\subseteq\mathbb{R}^n$, almost every direction $e\in S^{n-1}$ satisfies
\begin{equation}\label{eq:Marstrand}
    \dim_H(p_e E) = \min \{1, \dim_H(E)\}
\end{equation}
where $p_e x = e\cdot x$ denotes the projection of $x$ onto $e$ and $p_e E$ is the image of $E$ under this projection map \cite{Marstrand54}. Projections cannot increase the dimension of a set, nor can the Hausdorff dimension of a subset of $\R$ exceed $1$. Thus, Marstrand's theorem says that an analytic set's projections are as large as possible in almost every direction.

Variants and generalizations of this theorem have been a fruitful research direction essentially since its original proof. Mattila \cite{Mattila75} extended Marstrand's result to projections onto higher dimensional linear subspaces. Lutz and the second author \cite{LutStu24} generalized the original theorem to weakly regular sets, i.e., sets with equal Hausdorff and packing dimension. Orponen \cite{Orponen20a} extended their result to higher dimensional orthogonal projections of weakly regular sets. Recently, the second author \cite{Stull22a} further generalized \eqref{eq:Marstrand} to hold for sets with ``optimal oracles'', a class that strictly contains weakly regular sets, analytic sets, and sets with metric outer measures.

An equivalent formulation of Marstrand's theorem is the following. If $E\subseteq \R^2$ is analytic and $\mathcal{D}\subseteq\mathcal{S}^1$ such that $\dim_H(p_e E) < \min\{\dim_H(E), 1\}$ for all $e\in \mathcal{D}$, then $\mathcal{D}$ must be of (Lebesgue) measure zero. We consider the converse of this formulation of Marstrand's theorem. Namely, if $\mathcal{D}\subseteq\mathcal{S}^1$, is there an analytic set $E\subseteq\R^2$ such that $\dim_H(p_e E) < \min\{\dim_H(E), 1\}$ for all $e\in \mathcal{D}$? More generally, let $\mathcal{C}$ be a class of subsets of $\R^2$. We say that a set of directions $\mathcal{D}\subseteq\mathcal{S}^1$ is \textit{universal for $\mathcal{C}$} if, for every $E\in\mathcal{C}$, there is some $e\in \mathcal{D}$ such that $\dim_H(p_e E) = \min\{\dim_H(E), 1\}$. Note that Marstrand's projection theorem immediately implies any positive measure set of directions is universal for the class of analytic sets. In this paper, we consider the existence of small (in a dimensional sense) universal sets for a variety of classes. 

The main results of the paper show that, for classes of regular subsets of $\R^2$, there are very small universal sets. In particular, we show that there is a set $\mathcal{D}\subseteq\mathcal{S}^1$ of lower box counting (and therefore Hausdorff) dimension zero which is universal for the class of AD-regular sets. In addition, we prove that, for every $\ve > 0$, there is a set $\mathcal{D}\subseteq\mathcal{S}^1$ of lower box counting dimension at most $\ve$ which is universal for the class of weakly regular sets. This is explained in more detail in Section \ref{ssec:introuniversalregular}. 

Universal sets for projections have been investigated in higher dimensions in the context of the ``restricted'' projection problem. Working in $\mathbb{R}^3$, F\"assler and Orponen conjectured that if $\gamma: I\to S^2$ is a suitable curve, then for any analytic set $E$, almost every $\theta\in I$ is such that $p_{\gamma(\theta)}(E)$ has maximal Hausdorff dimension. In other words, (positive measure subsets of) such curves are universal for the class of analytic sets in $\mathbb{R}^3$. A more detailed discussion of progress on this problem is contained in the next subsection.

We remark that the concept of ``universal sets" is not unique to projection theorems. The question of the existence of universal sets is well defined for any almost everywhere theorem. Indeed, this has been very successfully investigated in the context of Rademacher's theorem, on the differentiability properties of Lipschitz functions. Preiss \cite{Preiss90} showed that there is a Lebesgue null set $A\subseteq\R^2$ which is universal for Lipschitz functions $f:\R^2\rightarrow \R$, i.e., for any such $f$, there is an $x\in A$ at which $f$ is differentiable. This has been extended in \cite{PreSpe15,AlbCsoPre10} for a complete classification in all dimensions. The authors have also studied universal sets in the context of the pinned distance set problem, where regularity similarly plays an important role \cite{FieStu23, FieStu24}. Thus the existence of universal sets is an interesting phenomenon which is present in many areas of geometric measure theory, and analysis in general. 

\subsection{The restricted projection problem}

A key difference between $\mathbb{R}^3$ and $\mathbb{R}^2$ is that $\mathcal{S}^2$, the set of directions in $\mathbb{R}^3$, contains ``nice'' small subspaces i.e. curves. The restricted projection problem asks under what conditions a version of Marstrand's projection theorem holds on these subspaces. We clearly need some condition on the curves, as the following basic example shows. Let $\gamma(\theta)=\{\sin(\theta), \cos(\theta), 0\}$ and let $Z$ be the $z$-axis in $\mathbb{R}^3$. Then for every $\theta\in[0, 2\pi)$, $p_{\gamma(\theta)}Z$ is a singleton. F{\"a}ssler and Orponen \cite{FasOrp14} conjectured that the following non-degeneracy condition is sufficient for $C^2$ curves $\gamma: I\to \mathcal{S}^2$ to admit a version of Marstrand's projection theorem onto lines:
\begin{equation*}
    \text{span}\{\gamma(\theta), \gamma^\prime(\theta), \gamma^{\prime\prime}(\theta)\}=\mathbb{R}^3 \qquad \forall \theta\in I
\end{equation*}
Moreover, they conjectured that for such a $\gamma$, the space of its orthogonal complements also admits a version of Marstrand's projection theorem; note that these are projections onto \textit{planes}. We summarize the work on this problem that first implied the universality of certain sets. We begin by considering projections onto lines in $\mathbb{R}^3$:
\begin{itemize}
    \item Nondegenerate $C^2$ curves are universal for the class of analytic sets of dimension $\leq \frac{1}{2}$ \cite{FasOrp14, Kaufman68}
    \item Nondegenerate $C^2$ curves are universal for the class of self-similar sets without rotations \cite{FasOrp14, Hochman14}
    \item Nondegenerate circles are universal for the class of analytic sets \cite{KaeOrpVen17}
    \item Nondegenerate $C^2$ curves are universal for the class of analytic sets \cite{PraYangZahl, GanGuthMal2023}
    \item Certain AD-regular sets of dimension more than 1 are universal for the class of analytic sets \cite{Chen18}
\end{itemize}
For projections onto planes in $\mathbb{R}^3$, we have that:
\begin{itemize}
    \item The orthogonal complements of nondegenerate $C^2$ curves are universal for the class of analytic sets of dimension $\leq 1$ \cite{FasOrp14, Kaufman68}
    \item The orthogonal complements of nondegenerate $C^2$ curves are universal for the class of analytic sets \cite{GanGuoGuthHarMalWang2024}
    \item Certain AD-regular sets of dimension more than 1 are universal for the class of analytic sets \cite{Chen18}
\end{itemize}

The restricted projection problem can also be posed in higher dimensions; \cite{GanGuoWang2022} essentially establishes that nondegenerate curves give rise to universal sets for projections from $\mathbb{R}^n$ onto $k$-planes. A further discussion of some of the partial progress on this problem can be found in \cite{GanGuoWang2022}. 

\subsection{Exceptional set estimates and universal sets}\label{ssec:exceptionalsetestimates}
Returning to $\mathbb{R}^2$, because small subspaces of $\mathcal{S}^1$ are necessarily discontinuous fractals, much less is known than in the case of restricted projections. In every dimension, however, the existence of universal sets is related to exceptional set estimates, a central topic in geometric measure theory. Given a set $E\subseteq\R^2$ and $0 < s \leq \min\{\dim_H(E), 1\}$, define the exceptional set of $E$ by
\begin{center}
    $D_s(E) := \{ e\in\mathcal{S}^1\mid \dim_H(p_e E) < s\}$.
\end{center}
In 1968, Kaufman \cite{Kaufman68} refined Marstrand's projection theorem, by showing that, if $E\subseteq\R^2$ is analytic and $0\leq s\leq \min\{\dim_H(E), 1\}$, 
\begin{equation*}
    \dim_H\{D_s(E)\} \leq s.
\end{equation*}
The existence of sufficiently strong exceptional set estimates imply the existence of (small) universal sets.

We say a set of Hausdorff dimension $s$ that cannot be written as a union of sets of strictly smaller dimension has \textit{essential Hausdorff dimension} $s$. Applying Kaufman's theorem, it is easy to see that any set of directions with essential Hausdorff dimension $1$ is universal for the class of analytic sets. Briefly, given an analytic set $E$, let $d=\min\{\dim_H(E), 1\}$. Then, if $D(E) = \cup_{s<d} D_s(E)$ is the set of exceptional directions for $E$, we have
\begin{equation*}
    D(E) = \bigcup_{i\in\mathbb{N}} \{\theta\in S^1: \dim_H(p_\theta E)<d-\frac{1}{i}\}.
\end{equation*}
That is, $D(E)$ does not have essential Hausdorff dimension $1$ for any analytic set $E$. We may therefore conclude that any set $\mathcal{D}\subseteq\mathcal{S}^1$ of essential Hausdorff dimension $1$ is universal for the class of analytic sets. 

In this paper, we generalize Kaufman's exceptional estimate to a broader class of sets. As mentioned, in \cite{Stull22a} the second author introduced sets with ``optimal oracles'', a very general notion which we will define precisely in Section 2.2. Let 
\begin{equation*}
    \mathcal{C}_{OO} = \{E\subseteq\mathbb{R}^2: E\text{ has optimal oracles}\}
\end{equation*}
and
\begin{equation*}
    \mathcal{C}_{OO}(s) = \{E\subseteq\mathbb{R}^2: E\text{ has optimal oracles and } \dim_H(E)\leq s\}.
\end{equation*}
We prove the following
\begin{thm}
  Let $E\subseteq\R^2$ be a set with optimal oracles. Let $0 < s \leq \min\{\dim_H(E), 1\}$. Then,
    \begin{center}
        $\dim_H(\{e\in\mathcal{S}^1\mid \dim_H(p_e E) < s\}) \leq s$.
    \end{center}
\end{thm}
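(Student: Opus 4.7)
The plan is to extend the effective proof of Kaufman's theorem, developed via the point-to-set principle and Kolmogorov complexity in \cite{LutStu24, Stull22a}, from the analytic setting to the class of sets with optimal oracles. Recall that the point-to-set principle asserts $\dim_H(F)=\inf_A\sup_{y\in F}\dim^A(y)$, where $\dim^A(y)$ denotes the effective (Kolmogorov) dimension of $y$ relative to an oracle $A$. To prove the theorem it therefore suffices to exhibit a single oracle $A$ with the property that $\dim^A(e)>s$ implies $\dim_H(p_e E)\geq s$: the exceptional set in the statement is then contained in $\{e:\dim^A(e)\leq s\}$, which has Hausdorff dimension at most $s$ by point-to-set.

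I would fix $A$ to simultaneously encode the value $\dim_H(E)$ and a family witnessing the optimal-oracle structure of $E$, in the sense introduced in Section 2.2. The defining feature of optimal oracles is that, for any auxiliary oracle $B$ and any precision $r$ in a suitable cofinal set, $E$ contains a point whose complexity relative to $A\oplus B$ is essentially $r\min\{\dim_H(E),2\}$. Now fix $e$ with $\dim^A(e)>s$, and pick a sequence $r_n\to\infty$ along which $K^A_{r_n}(e)\geq s\cdot r_n-o(r_n)$. For an arbitrary oracle $B$, apply the optimal-oracle property with auxiliary oracle $A\oplus B\oplus e$ at precisions $r_n$ to extract $x_n\in E$ of nearly maximal complexity. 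The effective projection inequalities of \cite{LutStu24, Stull22a}, which lower bound $K^B_r(p_e x)$ in terms of $K^{A\oplus B}_r(x)$ and $K^A_r(e)$, then yield $K^B_{r_n}(p_e x_n)\geq s\cdot r_n-o(r_n)$. Since $B$ was arbitrary, the point-to-set principle applied to $p_e E$ forces $\dim_H(p_e E)\geq s$, as desired.

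The main obstacle is the middle step: aligning the precisions at which the direction $e$ is complexity-boosted with those at which the optimal-oracle property produces a high-complexity point of $E$, and verifying that the projection inequality can be invoked uniformly in the auxiliary oracle $B$. This alignment is precisely what the abstract definition of optimal oracles is engineered to make possible, so most of the technical work reduces to carefully unwinding that definition and feeding it into the projection lemmas already established in the effective framework. The final conversion from a pointwise complexity bound back to a Hausdorff dimension bound is then routine point-to-set bookkeeping.
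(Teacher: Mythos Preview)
Your overall strategy matches the paper's: reduce to the point-to-set principle, use the optimal-oracle property to find a point $x\in E$ whose complexity is not significantly reduced by the extra oracle $(B,e)$, and then invoke the effective projection bound (Theorem~\ref{thm:LutStu22}). The paper's argument is essentially the contrapositive of yours (it assumes $\dim_H(D_s(E))>s$ and picks $e\in D_s(E)$ with $\dim^{A}(e)>s$, where $A$ joins an optimal oracle for $E$, a Hausdorff oracle for $D_s(E)$, and a Hausdorff oracle for $p_eE$), but the mechanics are the same.

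There is, however, a genuine gap arising from a misreading of the optimal-oracle definition. You paraphrase it as producing, for each precision $r$ in a cofinal set, a point of $E$ of high complexity at that precision, and you then work with a varying sequence $x_n$. But a bound of the form $K^B_{r_n}(p_e x_n)\geq s\, r_n - o(r_n)$ with \emph{different} points $x_n$ says nothing about $\sup_{y\in p_e E}\dim^B(y)$: effective dimension is a $\liminf$ over all precisions for a fixed point, and no single $p_e x_n$ is controlled at more than one scale. The actual definition in Section~2.2 hands you a \emph{single} $x\in E$, depending only on $(B,\ve)$, with $K^{A,B,e}_r(x)\geq K^A_r(x)-\ve r$ for all sufficiently large $r$; this is exactly condition~(2) of Theorem~\ref{thm:LutStu22}. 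Likewise, $\dim^A(e)>s$ is a $\liminf$ statement, so $K^A_t(e)\geq st$ holds for \emph{all} large $t$, not merely along a subsequence, which is what condition~(1) of that theorem requires. Once you use the definitions as stated, the ``alignment of precisions'' you flag as the main obstacle evaporates: both hypotheses are uniform over all large $r$, and Theorem~\ref{thm:LutStu22} applies to the single $x$ directly. This is why the paper's proof is only a few lines after the oracles are set up.
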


\noindent As above, this immediately implies the existence of small universal sets for $\mathcal{C}_{\text{OO}}$ and $\mathcal{C}_{\text{OO}}(s)$. 

Although exceptional sets and universal sets are clearly connected, there are important differences. The mere existence of universal sets of a certain size does \emph{not} imply the corresponding exceptional set estimates. On the other hand, the small universal sets we discuss in the remainder of this section (unlike the above) do not come from exceptional set estimates; rather they are specific sets of directions with certain properties, such as large size at well-chosen scales. 
\subsection{Universal sets for classes of regular sets}\label{ssec:introuniversalregular}
 The main theorems of this paper prove the existence of very small universal sets for classes of regular sets. 

Informally, we say that a set is regular if it appears to be the same size when viewed at different scales. There are several common notions of regularity. Ahlfors-David regularity is a rather strong notion. Less restrictive is weak regularity. We say that a set is weakly regular if its Hausdorff and packing dimension are equal. For ease of reference, denote
\begin{equation*}
\mathcal{C}_{AD} = \{E\subseteq\mathbb{R}^2: E \text{ is AD-regular }\}
\end{equation*}
and 
\begin{equation*}
\mathcal{C}_{WR} = \{E\subseteq\mathbb{R}^2: E \text{ is weakly regular }\}.
\end{equation*}

Our first main theorem is the following:
\begin{thm}
    There exists a set $\mathcal{D}_0 \subseteq S^1$ of lower box (and hence Hausdorff) dimension zero that is universal for $\mathcal{C}_{AD}$.
\end{thm}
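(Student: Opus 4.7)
My plan combines the effective-dimension / point-to-set framework from \cite{Stull22a} with the quantitative scale-invariance of AD-regular sets.

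First, using the point-to-set principle I would reduce the universality question to a Kolmogorov-complexity condition on directions. For an $s$-AD-regular $E \subseteq \R^2$ with associated $s$-AD-regular measure $\mu$, this principle yields $\dimH(p_e E) = \min\{s,1\}$ whenever $e$ is ``generic'' with respect to an oracle $A$ encoding a dyadic discretization of $\mu$, in the sense that the conditional Kolmogorov complexity $K(e \uhr n \mid A \uhr n)$ stays sufficiently close to $n$ at infinitely many scales $n$. The force of AD-regularity is that $\mu$-typical $x$ have effective dimension (relative to $A$) equal to $s$ \emph{uniformly} across scales, so once $A$ is fixed, the set of non-generic $e$ is computably null and easily avoided.

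Second, I would build $\mathcal{D}_0$ by enumerating prototypes. Because AD-regularity is scale-invariant and essentially controlled by a single parameter (the AD-constant) plus bounded local information, the family of $s$-AD-regular measures on $[0,1]^2$ with fixed constant admits, at each scale $2^{-n}$, a ``prototype cover'' $\mathcal{P}_n$ of cardinality $|\mathcal{P}_n| \leq 2^{o(n)}$ under projection-equivalence. For each prototype $P \in \mathcal{P}_n$ I pick a direction $e_P$ generic for the associated oracle, and set $A_n := \{e_P : P \in \mathcal{P}_n\}$. Taking $\mathcal{D}_0 := \bigcup_k A_{n_k}$ along a rapidly increasing sequence $n_k$ with $n_{k+1} \gg n_k$, the gap between scales ensures the covering number at scale $2^{-n_k}$ is dominated by $|A_{n_k}| = 2^{o(n_k)}$, giving $\underline{\dim}_B(\mathcal{D}_0) = 0$.

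Third, for universality, any $s$-AD-regular $E$ with measure $\mu$ matches some $P_k \in \mathcal{P}_{n_k}$ at scale $2^{-n_k}$, so the chosen $e_{P_k} \in \mathcal{D}_0$ witnesses $\dimH(p_{e_{P_k}} E) \geq \min\{s,1\} - 1/k$. The scale-invariance of AD-regular measures should force the sequence $(P_k)_k$ to stabilize up to tangent-measure type, so that $(e_{P_k})_k$ takes only finitely many values in $\mathcal{D}_0$; pigeonhole then extracts a single $e \in \mathcal{D}_0$ achieving near-optimal projection at infinitely many scales, which suffices for $\dimH(p_e E) = \min\{s,1\}$.

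The main obstacle is the sub-exponential prototype covering in step two, together with the stabilization needed in step three. Quantifying how AD-regularity constrains the family of local shapes modulo projection-equivalence appears to require a delicate entropy estimate combining uniform density bounds with projection-theoretic input, and it is precisely where AD-regularity (rather than mere weak regularity) is indispensable --- which is also why the subsequent theorem of the paper only achieves dimension $\leq \varepsilon$ for $\mathcal{C}_{WR}$ instead of zero.
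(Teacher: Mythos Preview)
Your proposal has two genuine gaps, both of which you flag yourself but neither of which is close to resolved. The sub-exponential ``prototype cover'' in step two is unjustified: the space of $s$-AD-regular measures with fixed constant, discretized at scale $2^{-n}$, has genuinely exponential cardinality, and nothing in your outline collapses this entropy under any projection-equivalence you have defined. Tangent-measure compactness does not yield a finite-scale counting bound. More seriously, the stabilization in step three is a wish rather than an argument: there is no reason the sequence $(e_{P_k})$ should take only finitely many values, and even if it converged, projection dimension is not continuous in the direction, so near-optimal behaviour at scales $n_k$ for nearby directions does not transfer to a single limit $e$. Without stabilization you have produced, for each $E$, a \emph{sequence} of directions in $\mathcal{D}_0$ each good at one scale --- which says nothing about $\dimH(p_e E)$ for any fixed $e$.

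The paper's route is entirely different and sidesteps both problems. The set $\mathcal{D}_0$ is built once, independently of any AD-regular set, by taking Martin-L\"of random angles and zeroing their bits on intervals $(r_n, nr_n]$ for a super-exponential sequence $r_n$; the zeroed blocks force lower box dimension zero, while at each scale $r_n$ the directions retain full complexity relative to \emph{any} oracle. Given an AD-regular $E$, one takes an oracle $A$ for which $\mu$-almost every $x\in E$ satisfies $K^A_r(x)=\alpha r + O(\log r)$ at \emph{all} scales, and then picks $e\in\mathcal{D}_0$ random relative to $A$. The projection complexity at precision $r$ is bounded by partitioning $[0,r]$ into $\approx r/r_n$ sub-intervals of length $r_n$: on each piece $e$ has full complexity and $x$ has exactly-$\alpha$ growth, so a refined projection lemma gives $O((\log r)^2)$ error per piece, and the total error $O((r/r_n)(\log r)^2)$ stays $o(r)$ up to $r\approx (nr_n)^2$; beyond that the zeroed bits of $e$ are negligible anyway. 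The $O(\log r)$ (rather than $\varepsilon r$) control on $K^A_r(x)$ from AD-regularity is precisely what keeps the accumulated error sublinear across unboundedly many pieces --- and is the correct explanation of why weak regularity only gives dimension $\varepsilon$ in the subsequent theorem.
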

We note that there is precedent for regularity assumptions resulting in improved projection theorems. In \cite{Orponen16}, Orponen showed that the class of Hausdorff dimension $1$ AD-regular sets satisfies a certain universality property for \textit{box} dimension. Recently, Orponen \cite{Orponen24} proved bounds on exceptional sets for AD-regular sets in the context of box dimension. For even stronger notions of regularity, better estimates are known. Hochman \cite{Hochman14} proved that the exceptional set for self-similar sets is countable, and Peres and Shmerkin \cite{PerShm2007} showed that if the self-similar set is generated by irrational rotations, the exceptional set is \emph{empty}  (hence, any singleton is universal for this class). On the other hand, Orponen and Ren \cite{OrpRen24} investigated the extent to which these exceptional set estimates fail for weaker notions than AD-regularity.

Although \cite{OrpRen24} in general rules out strong exceptional set estimates when we slightly weaken the regularity assumption, there is still hope that small universal sets for such classes exist. Indeed, our second main theorem shows that this is true. 
\begin{thm}
    For every $\ve>0$, there exists a set $\mathcal{D}_\ve \subseteq S^1$ of lower box dimension $\ve$ that is universal for $\mathcal{C}_{WR}$.
\end{thm}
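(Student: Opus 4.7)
The plan is to combine the effective (algorithmic) proof of the Marstrand projection theorem for weakly regular sets from \cite{LutStu24,Stull22a} with a diagonal Cantor-type construction of $\mathcal{D}_\varepsilon$. By the point-to-set principle $\dim_H(E) = \min_A \sup_{x\in E}\dim^A(x)$, and by the main result of \cite{Stull22a}, every weakly regular $E$ admits an \emph{optimal oracle} $A$: one relative to which $E$ contains points whose effective Hausdorff and packing dimensions both equal $s=\dim_H(E)$. The effective projection theorem then states that for Lebesgue-a.e. $e\in S^1$, the projection $p_e$ preserves effective dimension at such a witness relative to $A$, giving $\dim_H(p_eE)=\min\{s,1\}$. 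Crucially, the ``good'' directions are characterized by a pointwise effective-genericity condition relative to $A$, and this condition is robust under refinement of the oracle.

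I would then construct $\mathcal{D}_\varepsilon$ as follows. Fix an enumeration $(A_k)_{k\in\mathbb{N}}$ of oracles dense (in the relativization preorder) among optimal oracles of weakly regular sets, and a rapidly growing sequence of precisions $(r_n)$ with $r_{n+1}\gg r_n$. Inductively build nested unions of arcs $U_1\supseteq U_2\supseteq\cdots$ in $S^1$, where $U_n$ consists of $\lfloor 2^{\varepsilon r_n}\rfloor$ arcs of length $2^{-r_n}$ contained in $U_{n-1}$, each arc chosen to lie inside the positive-measure set of directions that are effectively generic at precision $r_n$ with respect to each of $A_1,\dots,A_n$. Setting $\mathcal{D}_\varepsilon=\bigcap_n U_n$, the nesting and sparsity of $(r_n)$ give $N(\mathcal{D}_\varepsilon,2^{-r_n})\lesssim 2^{\varepsilon r_n}$, so $\underline{\dim}_B\mathcal{D}_\varepsilon \leq \varepsilon$; a mild lower bound at the complementary scales recovers exact lower box dimension $\varepsilon$.

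For universality, given $E\in\mathcal{C}_{WR}$ with optimal oracle $A$ and dimension $s$, choose $A_{k_0}$ sufficiently close to $A$ in the enumeration that genericity relative to $A_{k_0}$ at large precision implies the genericity relative to $A$ needed by the effective projection theorem. For every $n\geq k_0$ and every arc $I\subseteq U_n$, $I$ contains a direction effectively generic for $A$ at precision $r_n$; tracking a convergent sequence of such choices down the tree produces $e^*\in\mathcal{D}_\varepsilon$ satisfying the genericity condition at infinitely many precisions, whence the effective projection theorem yields $\dim_H(p_{e^*}E)=\min\{s,1\}$. The chief obstacle is this final convergence step: one needs a quantitative version of the effective projection theorem robust enough that genericity with respect to a finite-precision approximant of $A$ implies genericity with respect to $A$ itself with only an $o(r_n)$ loss in complexity, and one must calibrate the growth of $(r_n)$ to absorb the compound errors arising from the diagonalization over infinitely many oracles and scales while retaining the arc-count budget $|U_n|\approx 2^{\varepsilon r_n}$.
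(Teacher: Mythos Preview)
There are two genuine gaps in your plan.

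\textbf{The oracle enumeration does not exist.} You propose to fix a countable sequence $(A_k)$ of oracles ``dense (in the relativization preorder) among optimal oracles of weakly regular sets'' and, given a weakly regular $E$ with optimal oracle $A$, to pick $A_{k_0}$ close enough to $A$ that genericity relative to $A_{k_0}$ transfers to $A$. No such countable sequence exists: there are continuum-many Turing degrees, and for any countable list $(A_k)$ there is an oracle $A$ strictly above all of them, so randomness relative to any $A_{k_0}$ says nothing about randomness relative to $A$. The paper avoids this entirely. Its set $\mathcal{D}_s$ is defined once and for all by zeroing out the bits in the ranges $(r_n,\lfloor r_n/s\rfloor]$ of \emph{every} $\theta\in\ML$. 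The point is that for any single oracle $A$ (namely the optimal oracle for the given $E$), almost every $\theta$ is $A$-random, so $\mathcal{D}_s(A)\subseteq\mathcal{D}_s$ is nonempty. One therefore chooses $e$ \emph{after} seeing $E$ and $A$, not by anticipating all oracles in advance.

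\textbf{The cited effective projection theorem does not apply.} You invoke the effective projection theorem of \cite{LutStu24,Stull22a}, but that result requires $K^A_t(e)\geq st-\varepsilon r$ for \emph{all} $t\leq r$ (condition (1) of Theorem \ref{thm:LutStu22}). Directions in a set of lower box dimension $\varepsilon$ necessarily have long intervals of near-zero complexity growth, so this hypothesis fails badly at those scales. This is exactly the technical obstacle the paper is built to overcome: when $r$ lies in the range $[r_n,\tfrac{1}{\varepsilon^2}r_n/s]$ where $e$ has low complexity, one cannot apply Theorem \ref{thm:LutStu22}, and instead partitions $[r_n,r]$ into $\lesssim r/r_n$ subintervals each of length $r_n$, applies the ``almost yellow/teal'' machinery (Lemma \ref{lem:almostProjectionYellowTeal}) on each piece using the weak regularity of $x$ to control the complexity growth rate, and sums. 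Your sketch does not mention this partitioning or the yellow/teal lemmas, and without them the argument cannot reach precisions beyond $r_n$ during the low-complexity stretch of $e$.
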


Moreover, if we restrict the dimension of the sets in each class, we are able to also guarantee that the \emph{packing} dimension of each universal set is not too large.
\begin{prop}\label{prop:packingRegular}
    Let $s\in(0, 1)$,
    \begin{enumerate}
        \item There exists some $\mathcal{D} \subseteq S^1$ of lower box dimension 0 and packing dimension $s$ that is universal for $\mathcal{C}_{AD}\cap \mathcal{C}_s$
        \item For every $\ve>0$ there exists some $\mathcal{D}_{\ve} \subseteq S^1$ of lower box dimension $\ve$ and packing dimension $s$ that is universal for $\mathcal{C}_{WR}\cap \mathcal{C}_s$ 
    \end{enumerate}
\end{prop}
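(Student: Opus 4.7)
My plan is to strengthen the constructions behind Theorems 2 and 3 to yield universal sets whose packing dimension is also controlled. The key observation is that the hypothesis $\dim_H(E) \leq s$ relaxes the requirement on how densely the universal set must be populated at each scale: we no longer need to resolve projections of arbitrarily large dimension, but only of dimension at most $s$.

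Concretely, the universal sets $\mathcal{D}_0$ and $\mathcal{D}_\ve$ from Theorems 2 and 3 are naturally built as countable unions $\mathcal{D} = \bigcup_k F_k$, where each $F_k \subseteq S^1$ is a finite equispaced family of directions at a carefully chosen scale $r_k$, with $r_k \to 0$ very rapidly. For universality over the full classes $\mathcal{C}_{AD}$ or $\mathcal{C}_{WR}$, the sizes $|F_k|$ must grow nearly as $r_k^{-1}$, which can push the packing dimension of $\mathcal{D}$ up to $1$. When we restrict to $\mathcal{C}_s$, however, it suffices in view of Kaufman's bound to place only $|F_k| \asymp r_k^{-s}$ equispaced directions at scale $r_k$. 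The lower box dimension calculations of Theorems 2 and 3 then go through unchanged, since we have only shrunk the $F_k$; the packing dimension is bounded above by $s$ via the uniform estimate $\overline{\dim}_B(F_k) \leq s$ combined with the sparsity of the scales $r_k$ and the countable stability of $\dim_P$. The lower bound $\dim_P(\mathcal{D}) \geq s$ can be arranged by clustering the $F_k$ near a common accumulation point in $S^1$, so that the local upper box dimension there equals $s$.

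The main obstacle will be verifying that an $s$-dense net of directions at scale $r_k$ genuinely suffices for universality. Since the exceptional set of bad directions depends on $E$, one needs a scale-by-scale covering lemma: for every AD-regular (respectively weakly regular) set $E$ of dimension at most $s$, some direction in $F_k$ produces a projection of $E$ with near-maximal dimension at scale $r_k$, and these scale-wise guarantees must chain together in the limit. AD-regularity provides the quantitative scale-invariance needed for such a pigeonhole argument, which is why the clean lower box dimension $0$ is possible in part (1); the weakly regular case must additionally be coupled with the optimal-oracle framework of \cite{Stull22a}, and the $\ve$ slack in lower box dimension in part (2) compensates for the lack of uniformity in that setting.
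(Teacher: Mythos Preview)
Your proposal misreads the paper's construction at a basic level. The sets $\mathcal{D}_0$ and $\mathcal{D}_\ve$ in Section~\ref{sec:smallsets} are \emph{not} countable unions of finite equispaced nets; they are uncountable sets built by taking Martin--L\"of random angles $\theta$ and zeroing out blocks of their binary expansions. Every direction in $\mathcal{D}_0$ has effective packing dimension $1$ (its complexity is near-maximal on the intervals $[nr_n,r_{n+1}]$), which is why $\dim_P(\mathcal{D}_0)=1$; no amount of ``shrinking the $F_k$'' applies, because there are no $F_k$.

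The paper's actual modification (sketched in the remark following Theorem~\ref{thm:universalSetsADRegular}) stays entirely within the effective framework: one replaces $\ML$ by
\[
\ML(s)=\{\theta\in(0,\pi)\mid (\exists c)\; sr-c\log r\le K_r(\theta)\le sr+c\log r\ \text{for all }r\},
\]
i.e.\ angles of effective dimension exactly $s$ rather than $1$, and performs the same bit-zeroing construction. The resulting set has packing dimension $s$ by the point-to-set principle. Universality for $\mathcal{C}_{AD}\cap\mathcal{C}_s$ (resp.\ $\mathcal{C}_{WR}\cap\mathcal{C}_s$) then follows because a regular set of dimension at most $s$ yields points $x$ whose complexity grows at rate at most $s$, so every subinterval is $(s,c)$-teal; Lemmas~\ref{lem:projectionYellowTeal} and~\ref{lem:alternateProjectionYellowTeal} with $\sigma=s$ only require $K^A_t(e\mid x)\gtrsim st$, which directions built from $\ML(s)$ provide. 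Kaufman's bound is never invoked.

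Your alternative route has a genuine gap beyond this misreading. Even granting a scale-wise pigeonhole (some $e_k\in F_k$ good for $E$ at scale $r_k$), universality demands a \emph{single} $e\in\mathcal{D}$ with $\dim_H(p_eE)$ maximal. The directions $e_k$ vary with $k$, and you give no mechanism for extracting one direction that works at all scales simultaneously; ``these scale-wise guarantees must chain together in the limit'' is exactly the step that fails without further input. Kaufman's theorem controls the Hausdorff dimension of the full exceptional set $D_s(E)$, not covering numbers at individual scales, so it does not directly feed a pigeonhole argument of the kind you describe.
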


We also note some very natural examples of universal sets for regular classes. Define

\begin{equation*}
    H_s =\{e\in \mathcal{S}^1: \dim(e) = s\}
\end{equation*}

\noindent That is, $H_s$ is simply the set of directions of effective Hausdorff dimension $s$ (a notion we define in the next section). It follows from the point-to-set principle (theorem \ref{thm:pointToSet}) that $\dimH(H_s)=s$. From our definitions of $\mathcal{D}_s$ in section \ref{sec:smallsets}, it will be clear that for any $s\geq 0$, $\mathcal{D}_s\subseteq H_s$. Hence, $H_0$ is universal for the class of AD regular sets, and for any $s>0$, $H_s$ is universal for the class of weakly regular sets.

\subsection{Bourgain universal sets}
In this paper, we consider a slightly different formulation of the concept of universal sets, inspired by Bourgain's exceptional set estimate. In \cite{Bourgain10}, Bourgain proved a remarkable theorem, an exceptional set estimate showing that, for any analytic set $E\subseteq\R^2$,
\begin{equation}
    \dim_H(\{e \in \mathcal{S}^1\mid \dim_H(p_e E) \leq \dim_H(E)/2\} = 0.
\end{equation}
Call a set $\mathcal{D} \subseteq \mathcal{S}^1$ \textit{Bourgain universal for a class $\mathcal{C}$} if, for every set $E\in\mathcal{C}$, there is a direction $e \in \mathcal{D}$ such that
\begin{equation}
    \dim_H(p_e E) \geq \frac{\dim_H(E)}{2}.
\end{equation}

Bourgain's exceptional set estimate implies every set with \emph{nonzero} Hausdorff dimension is Bourgain universal for the analytic sets. However, it does not immediately imply that zero dimensional Bourgain universal sets exist. We establish the existence of dimension zero universal sets with the following theorem.  
\begin{thm}
    There exists some $\mathcal{D}\subseteq S^1$ of lower box dimension $0$ that is Bourgain universal for the class of analytic sets
\end{thm}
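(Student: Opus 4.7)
The strategy parallels the construction of $\mathcal{D}_0$ for the AD-regular class referenced in Section \ref{sec:smallsets}, with Marstrand's effective projection bound replaced by an effective (pointwise) version of Bourgain's exceptional set estimate. The first step is to formulate such an effective Bourgain inequality: for every oracle $A$ and every $x \in \R^2$, the projection $p_e x$ satisfies $\dim^A(p_e x) \geq \dim^A(x)/2$ provided the direction $e \in S^1$ carries sufficient Kolmogorov complexity at an appropriate infinite sequence of scales relative to $A$. Such a pointwise shadow of Bourgain's theorem should be obtainable by relativizing his argument (or subsequent quantitative refinements) to an arbitrary oracle, and then passing between $\dim^A$ and Kolmogorov complexity at scale via the machinery developed in \cite{LutStu24, Stull22a}.

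Given this pointwise Bourgain inequality, the rest of the argument is bookkeeping through the point-to-set principle, exactly as in the earlier theorems. For any analytic $E \subseteq \R^2$, the point-to-set principle (Theorem \ref{thm:pointToSet}) supplies an optimal oracle $A_E$ with $\dim_H(E) = \sup_{x \in E} \dim^{A_E}(x)$. If $\mathcal{D}$ contains even one direction $e$ whose complexity profile relative to $A_E$ is rich enough at the required scales, then applying the effective Bourgain inequality at a point $x \in E$ with $\dim^{A_E}(x)$ close to $\dim_H(E)$ yields
\[
    \dim_H(p_e E) \;\geq\; \dim^{A_E}(p_e x) \;\geq\; \dim^{A_E}(x)/2 \;\approx\; \dim_H(E)/2.
\]
Hence it suffices to exhibit a set $\mathcal{D}$ of lower box dimension zero that, for every oracle $A$, contains a direction with enough complexity at an appropriate scale sequence relative to $A$.

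The construction of $\mathcal{D}$ itself follows the blueprint of $\mathcal{D}_0$: fix a very rapidly growing sequence of \emph{sparse scales} $n_1 < n_2 < \cdots$ at which $\mathcal{D}$ is covered by only $2^{o(n_k)}$ balls of radius $2^{-n_k}$ (forcing $\underline{\dim}_B \mathcal{D} = 0$), while between consecutive sparse scales the set is allowed to be rich enough that every oracle $A$ witnesses some $e \in \mathcal{D}$ with adequate complexity at infinitely many non-sparse scales. The core obstacle is precisely this last requirement, executed uniformly over the uncountable family of oracles $\{A_E\}$: no direct diagonalization is available, so one must exploit the oracle-uniformity of the effective Bourgain bound together with the fact that it only demands moderate complexity along a sparse subsequence of scales. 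This decoupling between ``sparse" scales (where $\mathcal{D}$ is thin) and ``rich" scales (where the Bourgain bound is activated) is exactly what made $\mathcal{D}_0$ succeed for the AD-regular class, and the expectation is that an analogous explicit coding, now tuned to the half-dimension threshold of Bourgain rather than the full threshold of Marstrand, will produce the desired Bourgain universal set.
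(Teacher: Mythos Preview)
Your outline has the right overall shape but contains two genuine gaps, and it misidentifies the actual obstacle.

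\textbf{The displayed chain of inequalities is not justified.} You write $\dim_H(p_e E) \geq \dim^{A_E}(p_e x)$, but this requires $A_E$ to be a Hausdorff oracle for $p_e E$, which does not follow from $A_E$ being optimal (or Hausdorff) for $E$. In the paper's other reductions this is patched by joining in a Hausdorff oracle $B$ for $p_e E$ and proving a lower bound on $\dim^{A,B,e}(p_e x)$. In the present theorem the paper instead exploits analyticity: pass to compact $E_i\subseteq E$ with $0<\mathcal{H}^{s-1/i}(E_i)<\infty$, take $A$ so that each $E_i$ is effectively compact relative to $A$ and the restricted measures are $A$-computable, and then use that effective compactness forces $(A,e)$ itself to be a Hausdorff oracle for $p_e E_i$. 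Your plan does not supply any substitute for this step.

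\textbf{Optimal oracles give the wrong error term.} Even if you patch the previous point by joining $B$, the optimal-oracle property only yields $K^{A,B,e}_r(x)\geq K^A_r(x)-\ve r$. The paper's pointwise Bourgain bound (Proposition~\ref{prop:effThmBourgain}) needs the much tighter independence $K^{A,e}_r(x)\geq K^A_r(x)-c\log r$; with only an $\ve r$ error one is forced to shorten the zero-complexity blocks of the direction set and lands on $\mathcal{D}_\ve$ rather than $\mathcal{D}_0$ (this is exactly Proposition~\ref{prop:OOBourgain} and the remark following Theorem~\ref{thm:BourgainUniversal}). The paper obtains the $\log r$ error for analytic $E$ by using the computable measures $\mu_i=\mathcal{H}^{s-1/i}\vert_{E_i}$ together with Lemma~36 of \cite{Stull22c}, which shows that the set of $x\in E_i$ failing the $\log r$ bound is $\mu_i$-null. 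None of this is available from the bare optimal-oracle hypothesis you invoke.

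\textbf{The ``uncountable oracle'' worry is a red herring.} You flag as the core obstacle that $\mathcal{D}$ must work uniformly over uncountably many oracles $A_E$. It need not: for each fixed $E$ one builds a single oracle $A$ (the join of the $A_i$ above) and then Lemma~\ref{lem:propertiesOfD0} furnishes some $e\in\mathcal{D}_0$ with $e\in\mathcal{D}_0(A)$. The direction is allowed to depend on $E$; universality only asks that one such $e$ exist in $\mathcal{D}_0$.

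Finally, the pointwise inequality itself is not obtained by ``relativizing Bourgain's argument''; the paper proves it directly via the yellow/teal partitioning machinery (Lemmas~\ref{lem:projectionYellowTeal} and~\ref{lem:alternateProjectionYellowTeal}) with $(\log r)^2$ and $\sqrt{r}$ error control, which is exactly what makes the $\mathcal{D}_0$ construction (rather than $\mathcal{D}_\ve$) go through.
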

\noindent We also prove a related theorem for the broader class of sets with optimal oracles. However, by weakening the assumption to sets with optimal oracles, we only get arbitrarily small (Bourgain) universal sets. More precisely
\begin{prop}\label{prop:OOBourgain}
    For every $\ve>0$, there exists some $\mathcal{D}\subseteq S^1$ of Hausdorff dimension $\ve$ that is Bourgain universal for the class of sets with optimal oracles. 
\end{prop}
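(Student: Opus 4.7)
The plan is to derive an oracle-relativized version of Bourgain's exceptional set estimate for sets with optimal oracles, in direct parallel with the Kaufman-type generalization given as Theorem~1 above, and then to deduce the existence of the required dimension-$\ve$ Bourgain universal set as an easy consequence.

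The core technical step is to show that for every $E \subseteq \R^2$ with optimal oracles, the exceptional set
\[
B_E := \{e \in S^1 : \dimH(p_e E) \leq \dimH(E)/2\}
\]
has Hausdorff dimension zero. I would derive this by relativizing Bourgain's classical exceptional set argument to the optimal oracle $A$ of $E$, obtaining that the $A$-relative Hausdorff dimension of $B_E$ is zero, and then transferring to absolute Hausdorff dimension via the standard inequality relating the two (which is a consequence of the point-to-set principle: restricting the minimizing oracle to those Turing-above $A$ can only increase the minimum). This mirrors the derivation of Theorem~1 from Kaufman's classical bound.

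Given the above, for each $\ve > 0$ take $\mathcal{D}$ to be any set of Hausdorff dimension $\ve$, for concreteness $\mathcal{D} = H_\ve$, whose Hausdorff dimension equals $\ve$ by the point-to-set principle. For every $E$ with optimal oracles, $B_E$ has Hausdorff dimension zero while $\dimH(\mathcal{D}) = \ve > 0$, so $\mathcal{D} \not\subseteq B_E$. Choosing any $e \in \mathcal{D} \setminus B_E$ yields $\dimH(p_e E) > \dimH(E)/2$, establishing Bourgain universality. Note that with this strategy the dimension $\ve$ can be taken arbitrarily small but not zero, matching the statement of the proposition.

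The main obstacle is the oracle-relativized Bourgain estimate itself. Bourgain's original exceptional set argument uses delicate Fourier-analytic techniques, and adapting the proof to the optimal-oracle regime — in a form strong enough to yield a zero-dimensional exceptional set for every $E$ in the class — requires substantial work, in parallel with the effort needed to derive Theorem~1 from Kaufman's estimate but now carried out at the Bourgain exponent $\dimH(E)/2$ rather than at a generic exponent $s$. A secondary subtlety is verifying that the relativization transfers correctly across all oracles arising from the class; this should follow from the same framework that underpins the optimal-oracle Kaufman theorem.
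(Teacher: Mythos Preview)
Your route is genuinely different from the paper's, and it contains a real gap.

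The paper does \emph{not} prove an exceptional set estimate for the class $\mathcal{C}_{OO}$ at the Bourgain exponent. Instead, it takes the specific set $\mathcal{D}_\ve$ constructed in Section~\ref{sec:smallsets} and argues pointwise, exactly as in Proposition~\ref{prop:effThmBourgain}, but with the weaker independence hypothesis $K^{A,B,e}_r(x)\ge K^A_r(x)-\ve r$ that the optimal-oracle property supplies (in place of the $-c\log r$ available for analytic sets via the measure-theoretic Lemma~36 of \cite{Stull22c}). This $\ve r$ loss propagates into the bound $K^A_{s,b}(e\mid x)\ge K^A_s(e)-\ve r-O(\log r)$, so the partitioning argument with Lemma~\ref{lem:projectionYellowTeal} only tolerates complexity gaps in $e$ of length $O(\ve r)$; that is precisely what membership in $\mathcal{D}_\ve(A)$ (rather than $\mathcal{D}_0(A)$) guarantees. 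Taking $B$ to be a Hausdorff oracle for $p_eE$ then finishes as in Theorem~\ref{thm:CountablyUniversalWeaklyRegular}.

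Your proposed alternative --- prove $\dim_H(B_E)=0$ for every $E\in\mathcal{C}_{OO}$ and then take any $\ve$-dimensional $\mathcal{D}$ --- would indeed imply the proposition, and in fact would be strictly stronger (it would make \emph{every} positive-dimensional set Bourgain universal for $\mathcal{C}_{OO}$). But the method you sketch for obtaining it does not go through. First, you mischaracterize how Theorem~\ref{thm:exceptionalsetOptimalOracles} is proved: it is not obtained by relativizing Kaufman's classical potential-theoretic argument, but by applying the effective pointwise bound of Theorem~\ref{thm:LutStu22} directly. The analogous pointwise ingredient you would need at the Bourgain exponent --- something like ``$\dim^A(e)>0$ implies $\dim^{A,e}(p_ex)\ge\dim^A(x)/2$ for suitable $x$'' --- is not available; Theorem~\ref{thm:conditionalProjectionBounds} requires the direction to be essentially random up to precision $t\ge r/C$, which mere positivity of $\dim^A(e)$ does not give. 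Second, relativizing Bourgain's own Fourier-analytic proof is not a viable substitute: that argument relies on $E$ supporting Frostman measures, which flows from analyticity and is exactly what the optimal-oracle hypothesis does \emph{not} supply. The paper in fact flags the question of whether Theorem~\ref{thm:BourgainUniversal} extends to $\mathcal{C}_{OO}$ as open, so the zero-dimensional exceptional set estimate you are assuming is, as far as the paper is concerned, unproven.
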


Recently Ren and Wang proved a sharp exceptional set estimate containing the results of Kaufmann and Bourgain and verifying a conjecture of Oberlin. Namely, they showed that if $E\subseteq\mathbb{R}^2$ is Borel and $u$ is such that $0\leq u\leq \min\{1, \dim_H(E)\}$, then
\begin{equation*}
    \dim_H\{\theta\in S^1: \dim_H(\pi_\theta E)<u\}\leq \max\{2u - \dim_H(E), 0\}.
\end{equation*}

For the purpose of completeness, we note that this result implies the existence of small ``generalized'' universal sets in the same manner as the Kauffman and Bourgain estimates respectively imply the existence of small universal and Bourgain universal sets.

\section{Preliminaries}\label{sec:effprojresults}
\subsection{Geometric measure theory}
We briefly recall the classical notions of Hausdorff and packing dimension. Given $E\subseteq\mathbb{R}^n$, call a collection of open balls $\{B_i\}_{i\in\mathbb{N}}$ a $\delta$-cover for $E$ if $E\subseteq\bigcup_{i\in\mathbb{N}}B_i$ and $\text{diam}(B_i)\leq \delta$ for all $i$. Likewise, call $\{B_i\}_{i\in\mathbb{N}}$ a $\delta$-packing for $E$ if each ball has its center in $E$, the balls are pairwise disjoint, and $\text{diam}(B_i)\leq \delta$ for all $i$. The $s$-dimensional Hausdorff (outer) measure of $E$ is 
\begin{equation*}
    \mathcal{H}^s(E) = \lim_{\delta\to 0^+} \inf_{\substack{\delta-\text{covers}\\ \text{ of }E}}\limits\{\sum\limits_{i=1}^\infty\text{diam}(B_i)^s\}
\end{equation*}
while the $s$-dimensional packing pre-measure of $E$ is 
\begin{equation*}
    \bar{\mathcal{P}}^s(E) = \lim_{\delta\to 0^+} \sup_{\substack{\delta-\text{packings}\\ \text{ of }E}}\limits\{\sum\limits_{i=1}^\infty\text{diam}(B_i)^s\}.
\end{equation*}
We can use the above pre-measure to define a genuine (outer) measure as follows:
\begin{equation*}
    \mathcal{P}^s(E) = \inf\{\sum\limits_{i=1}^\infty \bar{\mathcal{P}}^s(E_i): E\subseteq\bigcup\limits_{i=1}^\infty E_i\}.
\end{equation*}
Then, the packing and Hausdorff dimensions are
\begin{equation*}
    \dim_H(E) = \inf\{s: \mathcal{H}^s(E) = 0\} \qquad \text{and} \qquad 
 \dim_P(E) = \inf\{s: \mathcal{P}^s(E) = 0\}. 
\end{equation*}

\subsection{Effective dimension and the point-to-set principle}

We begin by reviewing the definitions of effective dimension. More a more detailed discussion, we refer to \cite{LutLut18}. The Kolmogorov complexity of a string $\sigma\in\{0, 1\}^*$ is the length of the shortest string that is a description for $\sigma$ with no additional information, in the following sense. Let $\vert \pi \vert$ denote the length of a binary string $\pi$ and let $\lambda$ denote the empty string. Fix a universal (prefix-free) Turing machine $U$. The Kolmogorov complexity of $\sigma$ is 
\begin{equation*}
    K(\sigma) = \min_{\pi \in\{0, 1\}^*}\{ \vert \pi \vert: U(\pi, \lambda)=\sigma \}.
\end{equation*}
More generally, we can consider the complexity of $\sigma$ if $U$ is also given a string $\tau$. The Kolmogorov complexity of $\sigma$ conditioned on $\tau$ is
\begin{equation*}
    K(\sigma\vert \tau) = \min_{\pi \in\{0, 1\}^*}\{ \vert \pi \vert: U(\pi, \tau)=\sigma \}.
\end{equation*}
Using a fixed encoding of rational vectors as strings, we can naturally define $K(p)$ for $p\in\mathbb{Q}^n$. This, in turn, allows us to define the Kolmogorov complexity of points in $\mathbb{R}^n$ at a certain precision. Let $x\in\R^n$, and $r\in\N$. The Kolmogorov complexity of $x$ at precision $r$ is
\begin{equation*}
K_r(x) = \min\{K(p): p\in\mathbb{Q}^n\cap B_{2^{-r}}(x)\},
\end{equation*}
Thus, the complexity of $x$ at precision $r$ is the complexity of the simplest rational vector that is a precision $r$ approximation of $x$. Similarly, we can define the conditional Kolmogorov complexity of points. Let $x\in\R^n$, $y\in \R^m$ and $r,s\in\N$. The Kolmogorov complexity of $x$ at precision $r$ conditioned on $y$ at precision $s$ is
\begin{equation*}
    K_{r, s}(x\vert y) = \max\{\min\{K(p\vert q): p\in \mathbb{Q}^n\cap B_{2^-r}(x)\}: q\in \mathbb{Q}^m\cap B_{2^{-s}}(y)\}.
\end{equation*}
To save space, we often write $K_r(x\vert y):=K_{r, r}(x\vert y)$ and $K_{r, s}(x):=K_{r, s}(x\vert x)$. We now introduce the several properties of Kolmogorov complexity that we will frequently use. \\

\noindent \textbf{Symmetry of information:}
For finite binary strings, the symmetry of information states that $K(\sigma, \tau) = K(\sigma) + K(\tau\vert \sigma, K(\sigma)) + O(1)$. In \cite{LutStu20}, Lutz and the second author proved that a version of this holds in Euclidean space, namely
\begin{equation*}
    \vert K_{r,s}(x, y) -K_{r, s}(x\vert y) -K_s(y)\vert \leq O(\log r)+ O(\log s) + O(\log \log \vert y\vert). 
\end{equation*}

An important application of the symmetry of information is the following. If we let $y=x$, this enables us to ``partition'' the interval $[1, r]$ in the following sense. Let a sequence of precisions $1=r_0, r_1, r_2, ..., r_m=r$ be given. Then
\begin{align*}
    K_r(x) &= \sum\limits_{i=1}^m \left(K_{r_i}(x) - K_{r_{i-1}}(x)\right) + K_1(x)\\
    &\approx \sum_{i=1}^m K_{r_i, r_{i-1}}(x)
\end{align*}
where the second equality holds with an $O(m\log r)$ error term.\\

\noindent \textbf{Bounds on complexity growth:}
It is immediate from the definition of Kolmogorov complexity at a precision that for any $r, s,$ and $x$,
\begin{equation*}
    K_{r+s}(x) \geq K_r(x).
\end{equation*}
We also have the following upper bound on this quantity due to Case and Lutz \cite{CasLut15}
\begin{equation}\label{eq:caseLutz}
K_{r+s}(x) - K_{r}(x) \leq n s + O_{n}(\log r)+ O_{n}(\log s).
\end{equation}
We say a point $x$ is (Martin-Lof) random if its complexity is close to this upper bound at every precision, in particular if there exists some $c$ such that $K_r(x)\geq nr -c$ for all $r$. We note that set of ML random points has full Lebesgue measure in $\mathbb{R}^n$. \\

\noindent \textbf{Oracles:}
By giving the universal Turing machine $U$ access to an oracle $A\subseteq\mathbb{N}$, the above definitions can be ``relativized'', which we indicate with a superscript. This oracle contains information which the machine $U$ can use in its computations. All of the above facts are true relative to an arbitrary oracle. Adding access to a oracle can never make a computation appreciably harder, since the machine is able to, at worst, ignore the information in the oracle. In particular
\begin{equation*}
    K_r^{A, B}(x)\leq K^A_r(x) + O(\log r),
\end{equation*}
where the superscript $A, B$ indicates the Turing machine has access to the information in $A$ \emph{and} $B$. Using a standard encoding, we can consider points in $\mathbb{R}^n$ as oracles. Equipped with such an oracle $y$, $U$ can be thought of as having unrestricted access to its binary expansion, whereas with conditional complexity, the access is restricted to a certain precision. Hence, 
\begin{equation*}
    K_r^{A, y}(x)\leq K_{r, s}^{A}(x\mid y) + O(\log r)\leq K_{r}^{A}(x) + O(\log r).
\end{equation*}

We will often need to reduce the complexity of some point $x$ at a specified precision. However we want to do so in a way that does not lower the complexity of other objects, at least not any more than information about $x$ does. The following lemma of Lutz and the second author \cite{LutStu24} introduces an oracle $D$ which accomplishes this goal. 
\begin{lem} \label{lem:finite-oracle}
    Let $A \subseteq \{0,1\}^*$, $r \in \N$, $x \in \R^n$, and $\eta \in \Q^+$. There is an oracle $D = D(A, n, r, x, \eta)$ satisfying the following:
    \begin{enumerate}[label={\normalfont \textbf{(\arabic*)}}, itemsep=1.5pt, topsep=-3pt]
        \item For every natural number $t \leq r$, 
        \begin{equation*}
            K_t^{A,D}(x) = \min \, \{ \eta r, K_t^A(x) \} + O(\log r).
        \end{equation*}
        \item For every $m,t \in \N$ and $y \in \R^m$, 
        \begin{equation*}
            K_{t,r}^{A,D}(y \mid x) = K_{t,r}^A(y \mid x) + O(\log r) \quad \text{and} \quad K_t^{A,D,x}(y) = K_t^{A,x}(y) + O(\log r).
        \end{equation*}
        \item If $B \subseteq \{0,1\}^*$ satisfies $K_r^{A,B}(x) \geq K_r^A(x) - O(\log r)$
        \begin{equation*}
            K_r^{A,B,D}(x) \geq K_r^{A,D}(x) - O(\log r).
        \end{equation*}
        \item For every $m,t \in \N$, $u \in \R^n$, and $w \in \R^m$,
        \begin{equation*}
            K_{r,t}^A(u \mid w)\leq K_r^{A,D}(u \mid w) + K_{r,t}^A(x) - \eta r + O(\log r).
        \end{equation*}
    \end{enumerate}
\end{lem}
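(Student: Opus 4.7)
The plan is to construct $D$ as a ``complexity cap'' oracle: it reveals just enough information about $x$ to witness complexity $\eta r + O(\log r)$ at every precision $t$ with $K_t^A(x) > \eta r$, but is itself low enough over $(A, x \uhr r)$ that it cannot artificially interfere with complexity relationships involving other points. I will proceed in three stages.

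Stage one sets up the threshold and construction. Let $t_0 \in \{0, 1, \ldots, r\}$ be the largest integer with $K_{t_0}^A(x) \leq \eta r$ (setting $t_0 = 0$ if no such integer exists). For precisions $t \leq t_0$ we have $\min\{\eta r, K_t^A(x)\} = K_t^A(x)$, so $D$ must not lower complexity further; for $t_0 < t \leq r$ we have $\min\{\eta r, K_t^A(x)\} = \eta r$, so $D$ must supply a compressed description. To achieve this, I take a shortest $A$-program $\pi$ computing a precision-$t_0$ rational approximation $p$ of $x$, with $|\pi| \leq \eta r + O(\log r)$, and let $D$ encode the tuple $(t_0, n, p, \pi)$ together with a protocol that, at any precision $t > t_0$, extracts a precision-$t$ rational approximation of $x$ from $\pi$ using only $O(\log r)$ additional ``hint bits'' to be supplied from $(A, x \uhr t)$ at query time.

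Stage two verifies conditions (1) and (2). For (1), the upper bounds are immediate from the construction: ignore $D$ when $t \leq t_0$, and use the protocol encoded in $D$ when $t > t_0$. The lower bounds use that the total $A$-information content of $D$ is at most $\eta r + O(\log r)$: any $(A, D)$-program for $x \uhr t$ of length strictly below $\min\{\eta r, K_t^A(x)\} - c\log r$ could be combined with a short $A$-description of $D$ to yield an $A$-program beating $K_t^A(x)$, contradicting the definition of $t_0$ (when $t > t_0$) or of $K_t^A$ itself (when $t \leq t_0$). Condition (2) follows because the construction makes $D$ computable from $(A, x \uhr r)$ up to $O(\log r)$ overhead; hence any computation using $(A, D)$ while conditioning on $x$ is already simulable using $A$ alone while conditioning on $x$.

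Stage three handles conditions (3) and (4). For (3), the point is that $D$ is constructed from $A$ and $x$ by a procedure independent of $B$; hence if $B$ does not help compress $x$ relative to $A$, it cannot help compress $x$ relative to $(A, D)$ either, since any such $(A, B, D)$-witness could be repackaged (using the canonical structure of $D$) into an $(A, B)$-witness, violating the hypothesis on $B$. For (4), I apply the symmetry of information identity stated in the excerpt to split $K_{r,t}^A(x)$ along the threshold, identifying the $\eta r$ bits ``captured'' by $D$ and the residual $K_{r,t}^A(x) - \eta r$ bits; converting an $(A, D)$-bounded program for $u$ given $w$ into a bare $A$-bounded program then costs exactly this residual plus $O(\log r)$. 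The main obstacle throughout is the careful $O(\log r)$ bookkeeping under symmetry of information, and in particular ensuring that the ``hint bits'' used in the decoding protocol of $D$ do not inadvertently leak more than $O(\log r)$ bits of information about $x$ at precisions $\leq t_0$ — which is what would otherwise cause the lower bound in (1), and conditions (2)–(3), to fail.
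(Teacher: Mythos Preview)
The paper does not prove this lemma; it is quoted from \cite{LutStu24} and used as a black box, so there is no in-paper argument to compare against.

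That said, your construction is oriented the wrong way, and this breaks property (1). You let $D$ encode a precision-$t_0$ approximation $p$ of $x$ together with its shortest $A$-program $\pi$. But then for any $t \leq t_0$ one can simply read $p$ off of $D$ and truncate, yielding $K_t^{A,D}(x) = O(\log r)$ rather than $K_t^A(x) + O(\log r)$. Your lower-bound argument in Stage two does not rescue this: if $K^A(D) \approx \eta r$ and you combine a short $(A,D)$-program with a length-$\eta r$ description of $D$, the resulting $A$-program has length about $\eta r$, which is entirely consistent with $K_t^A(x) \leq \eta r$ for $t \leq t_0$ --- no contradiction arises, so no nontrivial lower bound is obtained. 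Your ``protocol with hint bits'' step is also problematic as written: $\pi$ only computes $x\uhr t_0$, so there is no way to extract a precision-$t$ approximation for $t > t_0$ from $\pi$ using $O(\log r)$ bits of hint.

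The construction that actually works has $D$ encode the \emph{high-precision} portion of $x$: take a shortest $A$-program for $x$ at precision $r$ \emph{given} $x$ at precision $t_0$, so that $K^A(D) \approx K_r^A(x) - \eta r$. For $t > t_0$ one then combines a length-$\eta r$ program for $x\uhr t_0$ with $D$ to reach $x\uhr r$, giving the cap $\eta r$; for $t \leq t_0$, symmetry of information shows $D$ is essentially independent of $x\uhr t$ relative to $A$, so it cannot lower $K_t^A(x)$. This orientation is also what makes (4) come out with the correct sign: the cost of stripping $D$ is $K^A(D) \approx K_r^A(x) - \eta r$, which is exactly the $K_{r,t}^A(x) - \eta r$ term appearing in the statement.
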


\noindent \textbf{The point-to-set principle:}
In this paper, the main application of Kolmogorov complexity is to define a notion of asymptotic complexity of points in Euclidean space. In particular, the effective Hausdorff dimension of $x\in\mathbb{R}^n$ relative to an oracle $A$ is
\begin{equation*}
    \dim^A(x) = \liminf \frac{K^A_r(x)}{r}
\end{equation*}
and the effective packing dimension of $x\in\mathbb{R}^n$ relative to $A$ is
\begin{equation*}
    \Dim^A(x) = \limsup \frac{K^A_r(x)}{r}.
\end{equation*}

The effective and classical notions of dimension are closely connected through the point-to-set principle of Lutz and Lutz \cite{LutLut18}.
\begin{thm}\label{thm:pointToSet}
    For every $E\subseteq\mathbb{R}^n$,
    \begin{equation}\label{eq:HPtS}
        \dim_H(E) = \min_{A\subseteq\mathbb{N}}\sup_{x\in E}\dim^A(x)
    \end{equation}
    and
    \begin{equation}\label{eq:PPtS}
        \dim_P(E) = \min_{A\subseteq\mathbb{N}}\sup_{x\in E}\Dim^A(x).
    \end{equation}
\end{thm}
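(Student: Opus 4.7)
The plan is to prove the two equalities by establishing both inequalities in each via standard arguments from algorithmic information theory: an ``easy'' direction which translates complexity bounds into Hausdorff/packing covers, and a ``hard'' direction which builds an oracle from an essentially optimal cover of $E$.

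For the Hausdorff equality, I would first prove that for \emph{every} oracle $A$,
\begin{equation*}
\sup_{x\in E}\dim^A(x) \geq \dim_H(E).
\end{equation*}
Fix $s$ strictly larger than the supremum and a small $\delta>0$. For each $x\in E$, the liminf definition of $\dim^A(x)$ yields infinitely many precisions $r$ with $K_r^A(x) \leq (s-\delta)r$, and a witnessing rational $p_{x,r}\in \Q^n$ lying within $2^{-r}$ of $x$. For each $r$ there are at most $2^{(s-\delta)r}$ such rationals (counting programs), so the collection of balls $B(p_{x,r}, 2^{-r})$ over all $x\in E$ and all witnessing $r\geq N$ forms a $2^{-N}$-cover of $E$, whose $s$-weighted sum is dominated by a convergent geometric series in $\delta$. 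Letting $N\to\infty$ gives $\mathcal{H}^s(E)=0$, hence $\dim_H(E)\leq s$.

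For the reverse inequality I would construct an oracle $A$ witnessing $\sup_{x\in E}\dim^A(x)\leq \dim_H(E)$. For each pair of positive rationals $s>\dim_H(E)$ and $n\in\N$, pick a $2^{-n}$-cover $\mathcal{U}_{s,n}$ of $E$ by open balls with $\sum_{B\in \mathcal{U}_{s,n}}\mathrm{diam}(B)^s < 1$; such covers exist because $\mathcal{H}^s(E)=0$. Let $A$ encode all these countable families (with rational centers and radii). Relativized to $A$, any $x\in E$ and any precision $r$ admit some $B\in \mathcal{U}_{s, r}$ containing $x$ with $\mathrm{diam}(B) = 2^{-j}$ for some $j\geq r$, where the constraint $\sum_{B}\mathrm{diam}(B)^s<1$ forces the rank of $B$ within the balls of that diameter to be at most $2^{sj}$. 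Describing $B$ by this rank costs $sj + O(\log j)$ bits given $A$, and from the center and diameter of $B$ we read off a $2^{-r}$-approximation to $x$. Together with the trivial $n(r-j)+ O(\log r)$ bits needed to refine from precision $j$ down to precision $r$, we would choose $j=r$ whenever possible so that $K_r^A(x)\leq sr + O(\log r)$, yielding $\dim^A(x)\leq s$. Taking $s\searrow \dim_H(E)$ along rationals and combining oracles by a standard join completes the construction.

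For the packing equation, I would adapt the same two steps but work with limsup and upper box-counting covers instead. The easy direction is essentially identical: if $\Dim^A(x)<s$ for every $x\in E$, then for each $x$ there is a precision $N(x)$ beyond which $K_r^A(x)\leq sr$ for \emph{all} $r\geq N(x)$; partitioning $E$ by $N(x)$ into countably many pieces $E_m$, each piece is covered by $2^{sr}$ balls of radius $2^{-r}$ for every $r\geq m$, so $\overline{\dim}_B(E_m)\leq s$ and hence $\dim_P(E)\leq s$ via the standard characterization $\dim_P(E)=\inf\{\sup_i \overline{\dim}_B(E_i):E=\bigcup_i E_i\}$. For the hard direction I would again use a countable decomposition: write $E=\bigcup_m E_m$ with $\overline{\dim}_B(E_m)$ approaching $\dim_P(E)$, and let the oracle $A$ encode, for each $m$ and each $n$, an essentially optimal $2^{-n}$-cover of $E_m$ (justified by the definition of upper box dimension). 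Then any $x\in E$ lies in some $E_m$, and for all sufficiently large $r$ there are at most $2^{(s+o(1))r}$ balls in the cover of $E_m$ at scale $2^{-r}$, so describing $x$'s ball costs $\approx sr$ bits given $A$, yielding $\Dim^A(x)\leq s$.

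The main obstacle, I expect, is the hard direction for packing dimension: one must carefully handle the decomposition into countably many $E_m$ and arrange the oracle so that it gives a uniform complexity bound at \emph{every} sufficiently large $r$ (not merely infinitely many), which is what distinguishes limsup from liminf. In particular the oracle must simultaneously encode covers at every scale for every piece, and we need the inequality $K_r^A(x)\leq sr+O(\log r)$ to hold for all large $r$ rather than just along a subsequence; this forces us to use upper box-counting covers of pieces $E_m$, and to merge the countably many cover families into a single oracle via a standard pairing trick.
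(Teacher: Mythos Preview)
The paper does not prove this theorem; it is quoted as a preliminary result and attributed to Lutz and Lutz \cite{LutLut18}, so there is no in-paper proof to compare against. Your sketch follows the standard Lutz--Lutz strategy and is essentially correct.

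Two small wrinkles worth tightening. In the hard direction for Hausdorff dimension you cannot in general ``choose $j=r$'': the ball from $\mathcal{U}_{s,r}$ that contains $x$ may have diameter $2^{-j}$ with $j\gg r$. This is harmless, since its center is then already a precision-$j$ approximation of $x$, giving $K_j^A(x)\leq sj+O(\log j)$ and hence $\dim^A(x)\leq s$ along the subsequence $\{j_r\}$; but your sentence about ``refining from precision $j$ down to precision $r$'' with $n(r-j)$ extra bits is backwards and should be deleted. In the packing hard direction, the correct characterization is $\dim_P(E)=\inf\bigl\{\sup_i \overline{\dim}_B(\overline{E_i}):E\subseteq\bigcup_i E_i\bigr\}$ with closures, so you should fix $s>\dim_P(E)$ and take a decomposition with $\sup_i \overline{\dim}_B(\overline{E_i})<s$, rather than phrasing it as ``$\overline{\dim}_B(E_m)$ approaching $\dim_P(E)$''. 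With those adjustments the argument goes through.
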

 We call an oracle $A$ a Hausdorff oracle for $E$ if it achieves the minimum in \eqref{eq:HPtS} and a packing oracle if it achieves the minimum in \eqref{eq:PPtS}. Thus, every set has a Hausdorff and packing oracle. We note that the join of a Hausdorff (packing) oracle for $E$  with an arbitrary oracle is still a Hausdorff (packing) oracle for $E$. \\

\noindent \textbf{Optimal oracles:} By the point-to-set principle, a Hausdorff oracle $A$ for $E$ has the property that, for any additional oracle $B$ and $\ve>0$, there is some $x\in E$ such that $\dim^{A, B}(x)\geq \dim^A(x)-\ve$. In other words, there is no oracle $B$ that is significantly ``more helpful'' for computing every point in $E$ than $A$, in the sense of effective Hasudorff dimension. In \cite{Stull22a}, the second author introduced a stronger notion capturing when additional oracles $B$ are similarly unhelpful \emph{at every precision} (not just after taking a limit inferior). To be more precise, we say an oracle $A\subseteq\mathbb{N}$ is Hausdorff optimal for $E\subseteq \R^n$ if
\begin{enumerate}
    \item $A$ is a Hausdorff oracle for $E$.
    \item For every $B\subseteq\mathbb{N}$ and $\varepsilon>0$, there is some $x\in E$ such that $\dim^{A, B}(x)>\dim_H(E)-\varepsilon$ and for all sufficiently large $r$, 
    \begin{equation*}
        K^{A, B}_r(x)\geq K^{A}_r(x) -\varepsilon r
    \end{equation*}
\end{enumerate}

We note that the join of an optimal oracle and an arbitrary oracle is optimal. Most ``reasonable'' sets have optimal oracles, including the analytic (and hence Borel) sets, the weakly regular sets, and sets with metric outer measures.

\section{Effective projections}\label{sec:effproj}
In this section, we turn our attention specifically towards the complexity of projections. We begin by stating two theorems we will employ directly. 

Essentially due to Lutz and the second author \cite{LutStu22}, this result gives sufficient conditions for strong lower bounds on the complexity of projected points. 
\begin{thm}\label{thm:LutStu22}
Let $A,B\subseteq \N$, $x \in \R^2$, $e \in \mathcal{S}^1$, $s \in (0, 1]$, $\ve > 0$ and $r\in \N$. Assume the following are satisfied.
\begin{enumerate}
\item For every $t \leq r$, $K^A_{t}(e) \geq st - \ve r$.
\item $K^{A, B, e}_r(x) \geq K^A_r(x) - \ve r$.
\end{enumerate}
Then,
\[K^{A,B, e}_r(p_e x ) \geq \min\{\dim^A(x), s\}r - 10\ve^{1/2} r - O(\log r)\,.\]
\end{thm}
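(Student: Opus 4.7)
My plan is to prove this by contradiction, closely following the strategy of the projection theorem in \cite{LutStu22}, with extra care to track the auxiliary oracle $B$ and the $\ve r$-slack in both hypotheses. Set $\alpha = \min\{\dim^A(x), s\}$ and assume toward contradiction that $K^{A,B,e}_r(p_e x) < \alpha r - 10 \ve^{1/2} r - O(\log r)$.

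First I would reduce to the case $\dim^A(x) \leq s$. When $\dim^A(x) > s$, invoke Lemma \ref{lem:finite-oracle} with density parameter just above $s$ to obtain an oracle $D$ that caps the complexity of $x$ at level $sr$. Property (2) of the lemma preserves the conditional complexities involving $e$, so hypothesis (1) survives relative to $A \cup D$; property (3) ensures hypothesis (2) survives as well. Property (4) guarantees that a contradiction obtained relative to $A \cup D$ contradicts the original assumption. This reduces matters to $\alpha = \dim^A(x) \leq s$, where the complexity $K_r^A(x)$ is close to $\alpha r$.

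The technical core is a multi-scale reconstruction at an intermediate precision $t^* = r - \eta r$, for a small parameter $\eta > 0$ to be optimized. Given the hypothetical short program for $p_e x$ at precision $r$ together with $e$ at precision $r$, the point $x$ is confined to a strip of width $O(2^{-r})$ with known slope and position. Hypothesis (1) at every intermediate scale forces this strip to be transversal to rational directions, so that it cannot contain too many $2^{-t^*}$-spaced rationals. Enumerating these rationals yields an estimate of the form
\[ K^{A,B,e}_{t^*}(x) \leq K^{A,B,e}_r(p_e x) + (1 - s)(r - t^*) + O(\ve r) + O(\log r). \]
Combining with the growth bound \eqref{eq:caseLutz} controlling $K^{A,B,e}_r(x) - K^{A,B,e}_{t^*}(x)$ and with hypothesis (2) gives an inequality of the shape $K^A_r(x) - K^{A,B,e}_r(p_e x) \leq O(\ve r / \eta) + O(\eta r) + O(\log r)$. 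Choosing $\eta = \ve^{1/2}$ balances the two error terms at $O(\ve^{1/2} r)$, which together with $K^A_r(x) \geq (\alpha - o(1)) r$ contradicts the assumed smallness of $K^{A,B,e}_r(p_e x)$.

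The main obstacle is the reconstruction step: turning the strip-containment of $x$ into a genuine upper bound on $K^{A,B,e}_{t^*}(x)$. This requires enumerating candidate strips coming from short programs for $p_e x$, and for each strip bounding the number of $2^{-t^*}$-spaced rationals it contains using the full strength of hypothesis (1) across scales $[0, r]$. Any shortcut bypassing this intermediate-scale transversality would lose the $\ve^{1/2}$ scaling, which is why the optimization parameter $\eta$ must be taken of order $\ve^{1/2}$ rather than $\ve$; accounting for the $(1-s) \leq 1$ constant and the logarithmic slack then yields the explicit $10\ve^{1/2} r$ loss in the statement.
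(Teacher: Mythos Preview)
The paper does not prove Theorem~\ref{thm:LutStu22}; it is quoted as ``essentially due to Lutz and the second author \cite{LutStu22}'' and used as a black box. So there is no in-paper proof to compare against. That said, your sketch has a genuine gap at the reconstruction step, and it does not match the actual argument in \cite{LutStu22,LutStu20}.

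Your outer structure is right: reduce the complexity of $x$ via Lemma~\ref{lem:finite-oracle}, then run an enumeration to bound $K_r(x\mid p_e x, e)$, then optimize a parameter to produce the $\ve^{1/2}$ loss. But the middle step is not what you describe. The sentence ``Hypothesis (1) at every intermediate scale forces this strip to be transversal to rational directions, so that it cannot contain too many $2^{-t^*}$-spaced rationals'' is not how the argument works, and is not even true as stated: a strip of width $2^{-r}$ contains the same order of $2^{-t^*}$-spaced dyadic rationals regardless of its slope. The mechanism is complexity-theoretic, not a rational-counting argument. The key tool is Lemma~\ref{lem:intersectionLemmaProjections}: any $w$ on the fiber $p_e^{-1}(p_e x)$ at distance $2^{-a}$ from $x$ satisfies $K^A_r(w) \geq K^A_r(x) + K^A_{r-a,r}(e\mid x) - O(\log r)$, because the direction from $x$ to $w$ is orthogonal to $e$ and hence encodes $e$ to precision $r-a$. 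Hypotheses (1) and (2) together (via symmetry of information) give $K^A_{t,r}(e\mid x) \geq st - \ve r - O(\log r)$, so competitors $w$ far from $x$ have \emph{high complexity}, not merely ``high denominator''. The enumeration then tests programs up to the (capped) complexity of $x$ and outputs a point that must be close to $x$; this bounds $K^{A,D}_r(x\mid p_e x, e)$ directly at precision $r$, not at an intermediate precision $t^*$.

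Consequently your displayed estimate $K^{A,B,e}_{t^*}(x) \leq K^{A,B,e}_r(p_e x) + (1-s)(r-t^*) + O(\ve r)$ has no derivation, and the $(1-s)$ factor has no source in your reasoning. The $\ve^{1/2}$ in the genuine proof comes from choosing the density $\eta$ in Lemma~\ref{lem:finite-oracle} so that the cap $\eta r$ sits roughly $\ve^{1/2} r$ below $\min\{K^A_r(x), sr\}$, balancing the $\ve r$ slack in hypothesis (1) against the complexity gap needed for the enumeration to succeed; there is no intermediate precision $t^*$ to optimize over.
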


\noindent In \cite{Stull22c}, the second author proved a related result.
\begin{thm}\label{thm:conditionalProjectionBounds}
 Let $x\in \mathbb{R}^2, e\in S^1, \ve> 0, C \geq 1, A\subseteq\mathbb{N}$, and $t\leq r\in \mathbb{N}$. Suppose that $r$ is sufficiently large and that the following hold
\begin{enumerate}
    \item $t\geq \frac{r}{C}$,
    \item $K_{s,r}^{A}(e\mid x)\geq s - \ve r$ for all $s\leq t$
\end{enumerate}
Then, 
\begin{equation*}
    K_r^A(x\mid p_ex, e)\leq \max\{K^A_r(x) - r, \frac{K^A_r(x) - t}{2}, 0\} + 10C\ve r,
\end{equation*}
\end{thm}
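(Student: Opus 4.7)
The plan is to transfer the desired upper bound on $K_r^A(x \mid p_e x, e)$ into a projection lower bound via the symmetry of information and to establish this lower bound using the reduction oracle of Lemma~\ref{lem:finite-oracle} together with the projection lower bound of Theorem~\ref{thm:LutStu22}. Writing $k = K_r^A(x)$, the symmetry of information gives
\[
  K_r^A(x \mid p_e x, e) \;=\; K_r^A(x \mid e) - K_r^A(p_e x \mid e) + O(\log r) \;\leq\; k - K_r^A(p_e x \mid e) + O(\log r),
\]
so it suffices to establish the projection lower bound $K_r^A(p_e x \mid e) \geq \min\bigl\{k,\; (k+t)/2,\; r\bigr\} - O(C\ve r)$.

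To attack the projection bound, pick a parameter $\eta > 0$ and apply Lemma~\ref{lem:finite-oracle} at precision $r$ with this $\eta$, obtaining an oracle $D$ such that $K_s^{A,D}(x) = \min\{\eta r, K_s^A(x)\} + O(\log r)$ at every precision $s \leq r$ and $K_{s,r}^{A,D}(e \mid x) = K_{s,r}^A(e \mid x) + O(\log r)$. Property (2) preserves the conditional hypothesis on $e$ relative to $(A, D)$, and property (4) supplies the transfer inequality $K_r^A(x \mid p_e x, e) \leq K_r^{A,D}(x \mid p_e x, e) + k - \eta r + O(\log r)$. Then apply Theorem~\ref{thm:LutStu22} relative to $(A, D)$ at precision $t$: extracting $K_s^{A,D}(e) \geq s - \ve r$ for $s \leq t$ from the conditional hypothesis yields $K_t^{A,D,e}(p_e x) \geq \min\{\eta r, t\} - O(\sqrt{\ve}\,r)$. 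After transfer and optimization of $\eta$, this already produces the endpoint cases $k - r$ (via $\eta r = 2r - t$) and $0$ (via $\eta r = k$ when $k \leq t$), but only yields $k - t$ in the middle regime.

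To sharpen to $(k - t)/2$ in the middle regime, one performs a scale-by-scale enumeration: if $y$ is a competitor in the precision-$r$ tube around $(p_e x, e)$ with $|y - x| \approx 2^{-s}$, then $(y - x)/|y - x|$ determines $e$ to precision $r - s$, so the conditional hypothesis forces $K^{A,D}(y) \gtrsim (r - s) - \ve r$ whenever $r - s \leq t$. Partitioning competitors across scales $s \in [r - t,\, r]$ and balancing, at each scale, the $\sim 2^{r-s}$ admissible tube positions against the complexity cap $2^{\eta r}$ from the reduction, produces the refined projection lower bound with the characteristic $1/2$ factor and an error $O(C\ve r)$ from summing $O(C) = O(r/t)$ scales. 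Choosing $\eta r = k$ in the middle regime $t \leq k \leq 2r - t$ and combining with the transfer inequality then delivers the claimed bound $(k - t)/2 + O(C\ve r)$. The main obstacle is exactly this refined enumeration: a naive combination of Theorem~\ref{thm:LutStu22} with the reduction oracle delivers only $k - t$ rather than $(k - t)/2$, so the halving must come from synthesizing the position constraint and the complexity constraint across all intermediate scales, which is also why the error degrades linearly in $C$ rather than as $\sqrt{\ve}$.
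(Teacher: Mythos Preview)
The paper does not give a self-contained proof of this statement; it is quoted from \cite{Stull22c}, and the paper explicitly describes that argument as a partition of $[1,r]$ into $(1,c)$-yellow and $(1,c)$-teal sub-intervals of length at most $t$ (the partition result is the ``Lemma 17 in \cite{Stull22c}'' invoked later in Proposition~\ref{prop:effThmBourgain}), followed by an application of the \cite{Stull22c} analog of Lemma~\ref{lem:projectionYellowTeal} on each piece. The $1/2$ emerges from bookkeeping over the partition: on teal pieces the conditional complexity increment is essentially zero, on yellow pieces it is at most $K^A_{b,a}(x)-(b-a)$, and combining with the growth-rate constraints (at most $2$ on yellow, at most $1$ on teal) yields the stated maximum.

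Your route is different and contains a real gap. The step ``apply Theorem~\ref{thm:LutStu22} relative to $(A,D)$ at precision $t$ \ldots\ yields $K_t^{A,D,e}(p_e x)\geq\min\{\eta r,t\}$'' is not what that theorem delivers: as stated it outputs $\min\{\dim^{A,D}(x),1\}\,t$, and $D$ (which acts only at precisions $\leq r$) does not alter $\dim^{A,D}(x)$. Even the natural finite-precision version outputs $\min\{K^{A,D}_t(x),t\}=\min\{\eta r,\,K^A_t(x),\,t\}$, and $K^A_t(x)$ is \emph{completely uncontrolled} by the hypotheses --- it can be anywhere in $[0,2t]$. This is exactly the obstruction the yellow/teal partition removes: on each sub-interval it supplies the structural lower bound on $s\mapsto K^A_s(x)$ (yellow) or upper bound (teal) needed to make an enumeration of the type in Lemma~\ref{lem:pseudoYellow} succeed. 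Your final enumeration paragraph does not escape this: the claimed ``$\sim 2^{r-s}$ admissible tube positions at scale $s$'' has no clear meaning here (the fibre through $x$ in direction $e^\perp$ meets each dyadic annulus in $O(1)$ points), and the genuine competitor bound from Lemma~\ref{lem:intersectionLemmaProjections} is $K^{A,D}_r(w)\geq K^{A,D}_s(x)+(r-s)$, which again drags in the uncontrolled quantity $K^{A,D}_s(x)$. Without a device that tames $s\mapsto K^A_s(x)$ interval by interval, neither the endpoint $k-r$ nor the $1/2$ factor follows from what you have written.
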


This theorem can be viewed as a generalization of the preceding theorem. Namely, in Theorem \ref{thm:LutStu22}, the direction $e$ is assumed to have high complexity at \emph{all} precisions, whereas in this case it only has high complexity up to some precision $t\leq r$. As a result of this restriction, the argument in \cite{Stull22c} involves ``partitioning'' to isolate desirable complexity properties on certain types of intervals of precisions, which we now define.

Let $A\subseteq\N$, $x\in\R^2$, and $\sigma\in (0,2]$. Let $a \leq b$. We say that $[a,b]$ is \textit{$(\sigma,c)$-teal} if 
\begin{equation}
    K^A_{b, s}(x\mid x) \leq \sigma(b-s) + c \log b,
\end{equation}
for all $a\leq s \leq b$. We say that $[a,b]$ is \textit{$(\sigma, c)$-yellow} if
\begin{equation}
    K^A_{s,a}(x\mid x) \geq \sigma(s-a) - c \log b,
\end{equation}
for all $a\leq s \leq b$.

In this paper, we will require more refined versions of Theorem \ref{thm:conditionalProjectionBounds}. In particular, we will need  refined bounds on the complexity of computing $x$ given its projection $p_e x$ on yellow and teal intervals. 

The following three lemmas strengthen the results of \cite{Stull22c} which are used in the proof of Theorem \ref{thm:conditionalProjectionBounds}. These lemmas will be essential in the proofs of Theorems \ref{thm:CountablyUniversalWeaklyRegular}, \ref{thm:universalSetsADRegular}, and \ref{thm:BourgainUniversal}. 

\begin{lem}\label{lem:projectionYellowTeal}
Let $x\in\R^2, e\in\mathcal{S}^1$, $c\in\N$, $\sigma\in\mathbb{Q}\cap(0, 1]$, $A\subseteq\mathbb{N}$ and $a<b\in\R_+$. Suppose that $b$ is sufficiently large (depending on $e$, $x$, and $\sigma$) and $K^A_{s, b}(e\mid x) \geq \sigma s - c\log b$, for all $s\leq b-a$. Then the following hold.
\begin{enumerate}
\item If $[a,b]$ is $(\sigma, c)$-yellow, 
\begin{center}
$K^A_{b,b,b,a}(x\mid p_e x, e,x) \leq K^A_{b,a}(x\mid x) - \sigma (b-a) + O_{c}(\log b)^2 $.
\end{center}
\item If $[a,b]$ is $(\sigma, c)$-teal, 
\begin{center}
$K^A_{b,b,b,a}(x\mid p_e x, e,x) \leq O_{c}(\log b)^2$.
\end{center}
\end{enumerate}
\end{lem}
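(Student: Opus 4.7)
The plan is to reduce both conclusions to a single conditional projection lower bound, then prove that bound via an oracle renormalization together with the effective projection theorems of the preceding subsection.

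First, symmetry of information combined with the fact that $p_e x$ is computable from $(e, x)$ yields the chain-rule inequality
\[
K^A_{b,b,b,a}(x \mid p_e x, e, x) \;\leq\; K^A_{b,a}(x \mid x) \;-\; K^A_b(p_e x \mid e, x_{\leq a}) \;+\; O(\log b),
\]
where $x_{\leq a}$ abbreviates ``$x$ at precision $a$''. Hence both parts of the lemma will follow once we establish the single projection lower bound
\[
K^A_b(p_e x \mid e, x_{\leq a}) \;\geq\; \min\bigl\{K^A_{b,a}(x \mid x),\; \sigma(b-a)\bigr\} \;-\; O_c(\log b)^2.
\]
In the yellow case, the yellow hypothesis forces the minimum to equal $\sigma(b-a) - O_c(\log b)$, producing conclusion (1); in the teal case, the teal hypothesis forces the minimum to equal $K^A_{b,a}(x \mid x) + O_c(\log b)$, producing conclusion (2) after cancellation.

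To establish this projection lower bound, I would invoke Lemma~\ref{lem:finite-oracle} with parameters $(A, 2, b, x, a/b)$ to obtain an oracle $D$ satisfying $K^{A,D}_t(x) = \min\{a, K^A_t(x)\} + O(\log b)$ for $t \leq b$, preserving conditional complexity given $x$, and admitting the translation-back estimate of property (4). Intuitively $D$ encodes $x$ at precision $a$, so relative to $A, D$ the point $x$ is effectively truncated below precision $a$ and its ``new'' complexity on $[a,b]$ corresponds to $K^A_{b,a}(x \mid x)$. Property (2) combined with the hypothesis $K^A_{s,b}(e \mid x) \geq \sigma s - c \log b$ for $s \leq b-a$ then gives $K^{A,D}_s(e) \geq \sigma s - O_c(\log b)$ for $s \leq b-a$, placing the direction-complexity hypothesis of Theorem~\ref{thm:LutStu22} in force at scale $r = b - a$ with rate $\sigma$.

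Applying Theorem~\ref{thm:LutStu22} in this renormalized setting produces a lower bound of the form $K^{A, D, e}_{b-a}(p_e x) \geq \min\{\delta,\; \sigma\}(b-a) - o(b-a)$, where $\delta$ is the shifted effective dimension of $x$ relative to $A, D$. Property (4) of Lemma~\ref{lem:finite-oracle} then translates $\delta$ back to $K^A_{b,a}(x \mid x)/(b-a)$ and moves the bound from the $(A,D)$-oracle back to $A$; combining this with the Lipschitz estimate $|p_e x - p_e x_{\leq a}| = O(2^{-a})$ to align precision $b-a$ on the projection side with precision $b$ completes the proof of the projection lower bound. The main obstacle will be the quantitative bookkeeping: Theorem~\ref{thm:LutStu22} as stated gives an error of size $\varepsilon^{1/2} r$, not polylogarithmic in $b$. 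Sharpening this to the claimed $O_c(\log b)^2$ error will require either iterating the theorem at geometric scales to bootstrap a polylogarithmic bound, or invoking a refined variant tailored to the precise yellow/teal color structure — this is the step that needs the most care, and where the hypothesis ``$s \leq b-a$'' (rather than ``$s \leq b$'') on the direction's complexity becomes essential, forcing the projection theorem to be applied at the renormalized scale $b - a$.
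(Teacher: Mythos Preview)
Your symmetry-of-information reduction in the first paragraph is correct and matches the paper's approach: both conclusions do reduce to the single lower bound $K^A_{b,b,a}(p_e x\mid e,x)\geq \min\{K^A_{b,a}(x\mid x),\sigma(b-a)\}-O_c(\log b)^2$, and the yellow/teal hypotheses pick out the two sides of the minimum exactly as you say.

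The gap is precisely where you flag it, and it is fatal for the route you propose. Theorem~\ref{thm:LutStu22} carries an intrinsic $\varepsilon^{1/2}r$ loss; applying it at scale $r=b-a$ with $\varepsilon\approx c\log b/(b-a)$ yields an error of order $\sqrt{(b-a)\log b}$, not $(\log b)^2$. Iterating at geometric scales does not help: each application introduces a fresh square-root loss, and summing $\sqrt{r_i\log b}$ over a geometric partition still gives a polynomial-in-$b$ error. There is no black-box way to extract polylogarithmic precision from Theorem~\ref{thm:LutStu22}. Separately, your choice $\eta=a/b$ in Lemma~\ref{lem:finite-oracle} does not do what you want: it caps $K^{A,D}_t(x)$ at $a$ \emph{bits}, not at \emph{precision} $a$, so $K^{A,D}_{b,a}(x\mid x)$ collapses to $O(\log b)$ rather than recovering $K^A_{b,a}(x\mid x)$.

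The paper does not invoke Theorem~\ref{thm:LutStu22} here at all. Instead it proves a tailored enumeration lemma: a Turing machine that, given $e$ to precision $b$, $x$ to precision $a$, and $p_e x$ to precision $b$, searches for low-complexity candidates $w$ near $x$ with $p_e w\approx p_e x$; the geometric intersection estimate $K^A_b(w)\geq K^A_s(x)+K^A_{b-s,b}(e\mid x)-O(\log b)$ (Lemma~\ref{lem:intersectionLemmaProjections}) forces any such $w$ to satisfy $b-s=O_c(\log b)$. This yields $K^{A,D}_{b,b,a}(x\mid e,x)\leq K^{A,D}_{b,b,a}(p_e x\mid e,x)+O_c(\log b)$ directly, with no square-root loss. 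The oracle $D$ from Lemma~\ref{lem:finite-oracle} is used, but with $\eta b=K^A_a(x)+\sigma(b-a)-2(\log b)^2$ in the yellow case (resp.\ $\eta b=K^A_b(x)-2(\log b)^2$ in the teal case), chosen so that the complexity cap sits exactly at the threshold the enumeration lemma needs; removing $D$ via property~(4) then produces the stated bounds. The polylogarithmic error is thus achieved by a direct machine construction, not by sharpening an existing projection theorem.
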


We will also need the following, almost identical, lemma which only requires a straightforward modification of the proof of Lemma \ref{lem:projectionYellowTeal}, namely the introduction of a larger error term in the first portion of the proof of Lemma \ref{lem:pseudoYellow} (and \ref{lem:pseudoTeal}).

\begin{lem}\label{lem:alternateProjectionYellowTeal}
Let $x\in\R^2, e\in\mathcal{S}^1$, $c\in\N$, $\sigma\in\mathbb{Q}\cap(0, 1]$, $A\subseteq\mathbb{N}$ and $a<b\in\R_+$. Suppose that $b$ is sufficiently large (depending on $e$, $x$, and $\sigma$) and $K^A_{s, b}(e\mid x) \geq \sigma s - c\sqrt{b}$, for all $s\leq b-a$. Then the following hold.
\begin{enumerate}
\item If $[a,b]$ is $(\sigma, c)$-yellow, 
\begin{center}
$K^A_{b,b,b,a}(x\mid p_e x, e,x) \leq K^A_{b,a}(x\mid x) - \sigma (b-a) + O_{c}(\sqrt{b}) $.
\end{center}
\item If $[a,b]$ is $(\sigma, c)$-teal, 
\begin{center}
$K^A_{b,b,b,a}(x\mid p_e x, e,x) \leq O_{c}(\sqrt{b})$.
\end{center}
\end{enumerate}
\end{lem}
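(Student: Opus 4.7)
The plan is to imitate the proof of Lemma \ref{lem:projectionYellowTeal} almost verbatim, making only the single modification that the authors themselves flag: in the opening portion of each of the auxiliary pseudo-yellow and pseudo-teal lemmas, the error term extracted from the direction-complexity hypothesis must be upgraded from $c\log b$ to $c\sqrt{b}$. Concretely, the proof of Lemma \ref{lem:projectionYellowTeal} rests on two auxiliary statements handling yellow and teal intervals separately. Each of these begins by using the lower bound $K^A_{s,b}(e\mid x)\geq \sigma s - c\log b$ to establish that, conditional on $x$, the direction $e$ still behaves as if it were ``random'' on $\mathcal{S}^1$ at every precision $s\leq b-a$; the deficit term $c\log b$ is then absorbed into the error budget of the subsequent symmetry-of-information manipulations. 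Under the weaker hypothesis $K^A_{s,b}(e\mid x)\geq \sigma s - c\sqrt{b}$, I would perform exactly the same manipulations, now with an initial deficit of $c\sqrt{b}$. Since $\sqrt{b}$ dominates any polylogarithmic term in $b$, it subsumes all of the secondary $O(\log b)$ errors arising from applications of symmetry of information and of Lemma \ref{lem:finite-oracle}, producing a total error of $O_c(\sqrt{b})$ rather than the original $O_c((\log b)^2)$.

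After the two pseudo-lemmas are re-established with the enlarged error term, the remainder of the proof of Lemma \ref{lem:projectionYellowTeal} --- the partitioning of $[a,b]$ into appropriate sub-intervals and the final summation of contributions --- carries over without change, since it does not otherwise depend on the precise form of the initial error term. This yields the stated bounds in the yellow and teal cases respectively, with the $O_c(\sqrt{b})$ term replacing $O_c(\log b)^2$ in the final inequality.

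The main (and largely bookkeeping) obstacle will be verifying that no step in the original proof implicitly relied on the direction-complexity error being polylogarithmic in a stronger sense than merely $o(b)$. Every estimate in the original proof handles this error either additively or by multiplication by an absolute constant (and never, for example, by a factor of $r$), and $\sqrt{b}$ obeys exactly the same additive and constant-multiplicative algebra as $\log b$ in these estimates; the only genuine change is that $c\sqrt{b}$ now swallows the polylogarithmic corrections rather than standing alongside them. As long as each inequality in the original chain is re-checked with this dominance in mind, the substitution propagates cleanly to the stated conclusion.
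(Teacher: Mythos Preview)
Your approach is correct and matches the paper's own: the authors explicitly state that this lemma ``only requires a straightforward modification of the proof of Lemma \ref{lem:projectionYellowTeal}, namely the introduction of a larger error term in the first portion of the proof of Lemma \ref{lem:pseudoYellow} (and \ref{lem:pseudoTeal}),'' which is exactly the substitution you describe. One small correction: the ``remainder'' of the proof of Lemma \ref{lem:projectionYellowTeal} does \emph{not} involve any partitioning of $[a,b]$ into sub-intervals or summation --- it is just a direct application of the two pseudo-lemmas to the single interval $[a,b]$ after checking that the yellow/teal hypotheses imply condition (2) of those lemmas; the partitioning you have in mind occurs only later, in the applications (e.g.\ Proposition \ref{prop:effThmBourgain}).
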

Lemmas \ref{lem:projectionYellowTeal} and \ref{lem:alternateProjectionYellowTeal} will complement each other in the following way. The first is useful when we can guarantee that the direction has essentially maximal complexity at some small precision (less than $r$), and we need to perform a partitioning argument with very little loss to obtain good bounds up to $r$. The second is useful when the direction has complexity that is only close to maximal, but \emph{at} the precision $r$ we are interested in. In this case, the larger error term is not harmful, since there is no partition, hence no accumulation of error. 

The above lemmas are useful when we need to be quite careful with our error terms, in particular when we want them to be much less than linear. However, the framework we use in our proof allows us to obtain bounds when we significantly loosen the requirements on the intervals of the complexity function for $x$. Let $a \leq b$. We say that $[a,b]$ is \textit{$(\sigma,\ve)$-almost teal} if 
\begin{equation}
    K^A_{b, s}(x\mid x) \leq \sigma(b-s) + \ve b,
\end{equation}
for all $a\leq s \leq b$. We say that $[a,b]$ is \textit{$(\sigma, \ve)$-almost yellow} if
\begin{equation}
    K^A_{s,a}(x\mid x) \geq \sigma(s-a) - \ve b,
\end{equation}
for all $a\leq s \leq b$. Note that we allow $\ve r$ error terms instead of the $\log r$ error terms for yellow and teal intervals. 

\begin{lem}\label{lem:almostProjectionYellowTeal}
Let $x\in\R^2, e\in\mathcal{S}^1$, $c\in\N$, $\sigma\in\mathbb{Q}\cap(0, 1]$, $A\subseteq\mathbb{N}$ and $a<b\in\R_+$. Suppose that $b$ is sufficiently large (depending on $e$, $x$, and $\sigma$) and $K^A_{s, b}(e\mid x) \geq \sigma s - c \log b$, for all $s\leq b-a$. Then the following hold:
\begin{enumerate}
\item If $[a,b]$ is $(\sigma, \ve)$-almost yellow, 
\begin{center}
$K^A_{b,b,b,a}(x\mid p_e x, e,x) \leq K^A_{b,a}(x\mid x) - \sigma (b-a) + 4 \ve b $.
\end{center}
\item If $[a,b]$ is $(\sigma, \ve)$-almost teal, 
\begin{center}
$K^A_{b,b,b,a}(x\mid p_e x, e,x) \leq 4\ve b$.
\end{center}
\end{enumerate}
\end{lem}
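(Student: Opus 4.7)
The plan is to adapt the proof of Lemma~\ref{lem:projectionYellowTeal} from \cite{Stull22c}, substituting the weakened almost-yellow/almost-teal hypotheses (with $\ve b$ slack) for the original yellow/teal hypotheses (with $c \log b$ slack). The overall strategy proceeds via symmetry of information:
\begin{equation*}
K^A_{b,b,b,a}(x \mid p_e x, e, x) \leq K^A_{b,a}(x \mid x) - K^A_{b,a}(p_e x \mid e, x) + O(\log b),
\end{equation*}
so the bulk of the work is to establish a suitable lower bound on the projection complexity $K^A_{b,a}(p_e x \mid e, x)$.

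For the almost-yellow case, the hypothesis on $x$ says that relative to an oracle encoding $x$ at precision $a$, the point $x$ has effective complexity at least $\sigma s - \ve b$ on the precision range $s \in [0, b-a]$; the hypothesis on $e$ gives the analogous lower bound $\sigma s - c\log b$ for $e$. Applying a single-scale version of Theorem~\ref{thm:LutStu22} with these two effective-dimension bounds, and with the matching dimension threshold $\sigma$, yields
\begin{equation*}
K^A_{b,a}(p_e x \mid e, x) \geq \sigma(b-a) - 3\ve b - O(\log b).
\end{equation*}
Substitution into the symmetry-of-information bound above gives the almost-yellow conclusion. For the almost-teal case, the additional hypothesis $K^A_{b,a}(x \mid x) \leq \sigma(b-a) + \ve b$ collapses the $K^A_{b,a}(x\mid x) - \sigma(b-a)$ term to at most $\ve b$, producing the desired bound $K^A_{b,b,b,a}(x\mid p_e x, e, x) \leq 4\ve b$. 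The lower bound on $K^A_{b,a}(p_e x \mid e, x)$ used here comes from the same projection argument; the almost-teal upper bound on $K^A_{b,s}(x\mid x)$ implies, via symmetry of information, that the true complexity of $x$ at each intermediate precision $s \in [a,b]$ is bounded below by $K^A_{b,a}(x\mid x) - \sigma(b-s) - \ve b$, which is enough to run the projection argument with the same $3\ve b$ slack.

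The main obstacle is the single-scale projection bound itself: the hypothesis on $x$ is not an unconditional lower bound on $K^A_s(x)$, but a conditional one relative to $x$ at precision $a$, and it carries an $\ve b$ slack rather than the $O(\log b)$ slack of the standard setup. Tracking the error terms through the proof of Theorem~\ref{thm:LutStu22} with this weaker input requires some care, but because we treat the entire interval $[a,b]$ at a single scale without further subdividing, the $\ve b$ slack propagates only additively and does not get multiplied by any factor of $\log b$. Our budget of $4\ve b$ in the stated conclusion then comfortably absorbs the $3\ve b$ from the projection bound together with $O(\log b)$ from symmetry of information, provided $b$ is taken sufficiently large.
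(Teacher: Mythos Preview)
Your approach and the paper's are essentially the same. The paper treats this lemma as a ``simple modification'' of Lemma~\ref{lem:projectionYellowTeal}, running the enumeration/oracle-$D$ machinery of Lemmas~\ref{lem:pseudoYellow}--\ref{lem:pseudoTeal} directly, with the $c\log b$ slack replaced by $\ve b$ and the choice of $\eta$ shifted accordingly (e.g.\ $\eta b = K^A_a(x) + \sigma(b-a) - 2\ve b$ in the almost-yellow case, $\eta b = K^A_b(x) - 2\ve b$ in the almost-teal case). Your proposal repackages this through an initial symmetry-of-information step and a lower bound on $K^A_{b,b,a}(p_e x \mid e, x)$ obtained from ``a single-scale version of Theorem~\ref{thm:LutStu22}''.

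There is, however, a soft gap: Theorem~\ref{thm:LutStu22} as stated cannot be invoked as a black box here. Its conclusion involves the asymptotic quantity $\dim^A(x)$ rather than a scale-$b$ complexity, and it carries a $10\ve^{1/2}r$ error rather than the $O(\ve r)$ you claim. To obtain your bound $K^A_{b,b,a}(p_e x \mid e, x) \geq \sigma(b-a) - 3\ve b$ you must, as you acknowledge, open up its proof---and that proof \emph{is} the enumeration lemma plus the oracle-$D$ reduction. So the symmetry-of-information detour does not avoid any of the real work; the two routes are the same argument viewed from opposite ends. (Minor point: in the almost-teal case the paper would use the teal condition \emph{directly} in verifying condition~(2) of the enumeration lemma, rather than first deriving a yellow-type lower bound as you do; both work, but the direct route is cleaner.)
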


\subsection{Proof of main lemmas}
In this section, we will prove Lemma \ref{lem:projectionYellowTeal}. We again note that the proofs for Lemma \ref{lem:alternateProjectionYellowTeal} and Lemma \ref{lem:almostProjectionYellowTeal} are simple modifications of the proof, and are omitted.  

The proof of Lemma \ref{lem:projectionYellowTeal} proceeds in two main steps. First, we need an ``enumeration'' lemma, which gives technical conditions under which we can find a strong upper bound for the complexity growth of $x$ on $[a, b]$ given $e$ up to precision $b$.

\begin{lem}
    Let $x\in\mathbb{R}^2$, $B\subseteq\mathbb{N}$, $e\in S^1$, $a, b\in \mathbb{N}, \sigma, c\in \mathbb{R}_{+}$, and $ \eta\in \mathbb{Q}^+$ be such that $b\geq a$ is sufficiently large and the following conditions hold:
    \begin{enumerate}
        \item $K^B_b(x)\leq \eta b +  c_1 \log b$
        \item For every $w\in B_{2^{-a}}(x)$ such that $p_e w = p_e x$, 
        \begin{equation*}
        K^B_b(w)\geq \eta b + \min \{c_2(\log b)^2, \sigma(b-s)- c_1 \log b\}  
        \end{equation*}
        whenever $s=-\log \vert x - w\vert \in (a, b]$
    \end{enumerate}
    Then for every $A\subseteq\mathbb{N}$
    \begin{equation*}
    K_{b, b, a}^{A, B}(x\mid e, x)\leq K_{b, b, a}^{A, B}(p_e x\mid e, x) + \left(\frac{4(c_1+1)}{\sigma} + C_{x, e}\right)\log b + K^B(\eta, c). 
    \end{equation*}
\end{lem}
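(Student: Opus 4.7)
The plan is to construct, from any description of $p_e x$ at precision $b$ (conditioned on $e$ at precision $b$ and $x$ at precision $a$), a description of $x$ at precision $b$ that pays only $O(\log b) + K^B(\eta,c)$ extra bits. The construction is an enumeration argument: oracle $B$ together with the parameters $\eta,c$ lets us list a short collection of low-complexity candidates that all cluster close to $x$, and we then pay the cost of specifying which candidate is $x$.

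First, I would describe the enumeration. Given $B$ and rational approximations of $\eta$ and $c$ (costing $K^B(\eta,c)$ bits to name), and having read off the precision-$a$ approximation of $x$ and the precision-$b$ approximations of $e$ and $p_e x$ supplied as inputs, enumerate all rationals $q$ satisfying: $q$ lies within $\approx 2^{-a}$ of the given approximation of $x$; the projection $p_{e'}(q)$ (using the approximate direction $e'$) is within $O(2^{-b})$ of the given approximation of $p_e x$; and $K^B(q)\le \eta b + c_1 \log b$. By hypothesis (1), $x$ itself is approximated within $2^{-b}$ by a rational on this list.

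Next, I would use hypothesis (2) to bound the spread of the candidate set. For any enumerated $q$ at distance $|x-q|=2^{-s}$ with $s\in(a,b]$, translating $q$ along the fiber $p_e^{-1}(p_e x)$ by at most $O(2^{-b})$ produces a point $q^*$ with $p_e q^* = p_e x$ exactly and $|x-q^*|\approx |x-q|$. Since $q^*$ is computable from $q$, $e$, and $p_e x$ at precision $b$, one has $K^B_b(q^*) \le K^B(q) + O(\log b) \le \eta b + c_1 \log b + O(\log b)$, and hypothesis (2) then forces
\begin{equation*}
\min\{c_2 (\log b)^2,\, \sigma(b-s) - c_1 \log b\} \le O(\log b).
\end{equation*}
For $b$ sufficiently large the $(\log b)^2$ branch is ruled out, leaving $b - s \le (2c_1 + O(1))\log b / \sigma$. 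Hence every candidate lies in a ball of radius $2^{-b + O(\log b/\sigma)}$ around $x$.

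Finally, to promote this cluster bound to an honest precision-$b$ approximation, I would run the same enumeration at a slightly higher precision $b' = b + O(\log b/\sigma)$ with the correspondingly adjusted threshold, so that every candidate lies within $2^{-b}$ of $x$. A two-dimensional volume count in the tube around the fiber shows that the number of such rationals is at most $b^{O((c_1+1)/\sigma)}$, with $C_{x,e}$ absorbing the factor that depends on the angle between $e$ and the coordinate axes and the local affine geometry of the projection at $x$. Specifying the index of the correct candidate therefore costs at most $\bigl(\tfrac{4(c_1+1)}{\sigma} + C_{x,e}\bigr)\log b$ bits, and combining this with the cost $K^B(\eta,c)$ of naming the enumeration parameters gives the claimed bound. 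The main obstacle is the middle step: transferring the approximate projection condition used in the enumeration to the exact condition required by hypothesis (2), carefully tracking the $O(\log b)$ slack introduced when $q$ is perturbed to $q^*$, and verifying that the $c_2(\log b)^2$ branch of the $\min$ really is inactive once $b$ is large enough.
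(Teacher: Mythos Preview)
Your enumeration strategy and your middle step (using hypothesis (2) to show every candidate is within $2^{-s}$ of $x$ with $b-s=O((c_1+1)\log b/\sigma)$) match the paper's proof essentially exactly. The divergence, and the gap, is in your final step.

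The paper does not count candidates or specify an index. It outputs the \emph{first} candidate $q_2$ the machine finds, notes (via your middle step, applied to the nearby exact-fiber point $w$) that $|x-q_2|\le 2^{-s}$ with $b-s\le \frac{2(c_1+1)}{\sigma}\log b$, and then simply pays the residual
\[
K^B_{b,s}(x)\;\le\; 2(b-s)+O(\log b)\;\le\;\frac{4(c_1+1)}{\sigma}\log b+O(\log b)
\]
via the growth-rate bound. That is the entire source of the constant $\frac{4(c_1+1)}{\sigma}$.

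Your final step has two problems. First, re-running the enumeration at precision $b'>b$ would need hypothesis~(2) at $b'$, which is not assumed. Second, a ``volume count'' does not bound the number of candidate \emph{rationals}: any ball contains infinitely many rationals, and the complexity ceiling $K^B(q)\le \eta b+c_1\log b$ gives no polynomial control on how many of them fall in a given $2^{-b}$-ball. (If you instead mean to count precision-$b$ \emph{cells} in the tube of length $2^{-s}$ and width $2^{-b}$, that cell count is $O(2^{b-s})=b^{O(1/\sigma)}$, and specifying the cell is exactly the same cost as the paper's growth-rate payment---so there is no need for $b'$, and no need for an index once you have any single candidate.) A minor point: your bound $K^B_b(q^*)\le K^B(q)+O(\log b)$ is correct, but the reason is just $|q-q^*|=O(2^{-b})$, not that $q^*$ is computable from $e,p_e x$ at precision $b$; the latter would introduce an unwanted $K^B_b(e)$ term.
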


As compared to other enumeration lemmas, this one involves $O(\log b)$ instead of $o(b)$ error terms, though its proof is very similar, in that we construct a Turing machine $M$ with conditional access to $x$ up to precision $a$ and the direction $e$ up to precision $b$. After receiving $p_e x$ at precision $b$ as an input, $M$ attempts to compute $x$ up to precision $b$ by testing points until it finds a point that:
\begin{itemize}
\item Agrees with $x$ up to precision $a$, 
\item Has nearly the same projection in the direction $e$ as $x$, and 
\item Is no more complex, at precision $b$, than $x$.
\end{itemize}
However, the machine may output a ``false positive'', that is, some approximation of $w\neq x$. The second condition of the lemma will guarantee that even the false positives are relatively close to $x$.

\begin{proof}
If $b\leq a+2$, then $K_{b, b, a}^{A, B}(x\mid e, x)\leq O(\log b)$ by \eqref{eq:caseLutz} and we are done. Assuming otherwise, define an oracle Turing machine $M$ that does the following given oracle $A, B$ and inputs $\sigma = \sigma_1\sigma_2\sigma_3\sigma_4$ and $(\theta, q_1)\in [0, 2\pi]\cap\mathbb{Q}\times \mathbb{Q}^2$ such that $U^B(\sigma_2)=s_1$, $U^B(\sigma_3)=s_2$, and $U^B(\sigma_4)=(\zeta, \iota)$. 

First, $M$ computes $U^{A, B}(\sigma_1, (\theta, q_1))=p$. Then, $M$ calculates $U^{B}(\pi)=q_2$ for all programs $\pi$ of length less than or equal to $\iota s_2 + \zeta\lceil\log s_2\rceil$ in parallel. For each $q_2$, $M$ checks whether the following are satisfied. 
\begin{enumerate}
    \item[\textup{(C1)}] $\vert q_2-q_1\vert \leq 2^{-(s_1 +1)}$
    \item[\textup{(C2)}] $\vert p_\theta q_2 - p\vert \leq 2^{-(s_2-1)}$
\end{enumerate}
\noindent $M$ outputs the first $q_2$ satisfying both of these conditions that it finds. 

In what follows, we will need to work with $x, e, $ and $p_ex$ up to certain precisions. To that end, let $\sigma_1$ testify to $K^{A, B}_{b+2, b+2+\lceil\log \vert x\vert \rceil, a+2}(p_e x\vert e, x)$, $\sigma_2$ testify to $K^B(a)$, $\sigma_3$ testify to $K^B(b)$, and $\sigma_4$ testify to $K^B(c_1, \eta)$. Let $\theta$ be a rational such that $\vert\theta - e\vert<2^{-(b+2+\lceil\log \vert x\vert \rceil)}$ and let $q_1$ be a rational such that $\vert q_1 - x\vert<2^{-(a+2)}$. First, we show that $M$ halts on this input. By the first condition of the lemma, the string $\pi_x$ that testifies to $K^B_b(x)$ has length no more than $\eta b + c_1\log (b)$, hence $M$ either checks the output of this string or halts before doing so. If $M$ checks this string, we have
\begin{equation*}
    \vert q_2-q_1 \vert \leq \vert x - q_2 \vert + \vert q_1-x \vert \leq 2^{-b}+ 2^{-(a+2)} \leq 2^{-(a+1)} 
\end{equation*}
So (C1) is satisfied. Similarly, 
\begin{align*}
\vert p_\theta q_2 - p\vert &\leq \vert p_e x - p_\theta q_2\vert + \vert p - p_e x \vert\\
&\leq \vert p_e x - p_\theta x\vert + \vert p_\theta x - p_\theta q_2\vert + \vert p - p_e x \vert\\
&\leq \vert p_e x - p_\theta x\vert + \vert x - q_2\vert + \vert p - p_e x \vert\\
&\leq  2^{-(b+1)} + 2^{-b} + 2^{-(b+2)}\\
&\leq 2^{-(b-1)}
\end{align*}
so (C2) is satisfied and $M$ halts on input $\sigma$ and $(\theta$, $q_1)$. (C1) and (C2), along with observation 3.2 in \cite{LutStu24} gives us the existence of some $w\in B_{2^{-a}}(x)$ such that $p_ew=p_ex$ and $\vert w - q_2\vert \leq 2^{-(b-1)}$. The proximity of $q_2$ to $w$, along with several applications of \eqref{eq:caseLutz}, ensures that
\begin{align*}
    K^{B}_{b, b, a}(w\mid e, x)&\leq K^{B}_{b-1, b+2+\log \vert x\vert, a+2}(w\mid e, x) + O_{\vert x \vert}(\log b)\\
    &\leq \vert \sigma\vert + c_M+ O_{\vert x \vert}(\log b)\\
    &\leq K^{A, B}_{b, b, a}(p_e x\mid e, x) + K^B(b) + K^B(a) + K^B(\eta, c) + O_{\vert x \vert}(\log b) \\
    &\leq K^{A, B}_{b, b, a}(p_e x\mid e, x) + K^B(\eta, c) + O_{\vert x \vert}(\log b).
\end{align*}
By symmetry of information, the definition of $s$, and another application of \eqref{eq:caseLutz}, 
\begin{align*}
    K^{A, B}_{b, b, a}(x\mid e, x)&\leq K^{A, B}_{b}(x\mid w) + K^{A, B}_{b, b, a}(w\mid e, x)+ O_{\vert x\vert} (\log b)\\
    &\leq K^{B}_{b}(x\mid w) + K^{B}_{b, b, a}(w\mid e, x) + O_{\vert x\vert} (\log b)\\
    &\leq K^{B}_{b}(x\mid w) +  K^{A, B}_{b, b, a}(p_e x\mid e, x) + K^B(\eta, c)+ O_{\vert x\vert} (\log b)\\
    &\leq K^{B}_{b, s}(x) +  K^{A, B}_{b, b, a}(p_e x\mid e, x) + K^B(\eta, c)+ O_{\vert x\vert} (\log b)\\
    &\leq 2(b-s) +  K^{A, B}_{b, b, a}(p_e x\mid e, x) + K^B(\eta, c)+ O_{\vert x\vert} (\log b)
\end{align*}
So it remains to bound $(b-s)$, which we can do using condition 2 of the lemma. Because $M$ only tests strings of length no more than $\eta b + c_1\log b$, $K_{b-1}^B(w)\leq \eta b + c_1\log b$. Hence, for sufficiently large $b$
\begin{equation*}
K_{b}^B(w)\leq \eta b + (c_1+2)\log b< \eta b + c_2(\log b)^2,
\end{equation*}
 which contradicts condition 2 of the lemma unless $c_2(\log b)^2> \sigma(b-s) - c_1 \log b$. So we may assume $K_{b}^B(w)\geq \eta b + \sigma (b-s) - c_1\log b$. Comparing the upper and lower bounds on $K_b^B(w)$ gives
\begin{equation*}
    b - s\leq \frac{2c_2+2}{\sigma}\log b
\end{equation*}
which completes the proof. 

\end{proof}

Next, we state a geometric lemma of Lutz and the second author, which will enable us to show that the second condition of the enumeration lemmas is met on yellow and teal intervals. 

\begin{lem}[\cite{LutStu20}]\label{lem:intersectionLemmaProjections}
Let $A\subseteq\N$, $x \in \R^2$, $e \in S^{1}$, and $b \in \N$. Let $w \in \R^2$ such that  $p_e x = p_e w$ up to precision $b$. Then 
\begin{equation*}
    K^A_b(w) \geq K^A_b(x) + K^A_{b-a,b}(e\mid x) + O(\log b)\,,
\end{equation*}
where $a := -\log \vert x-w\vert$.
\end{lem}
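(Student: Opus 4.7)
The plan is a two-step argument: a geometric step that recovers the direction $e$ from the pair $(x, w)$ at precision $b - a$, and a symmetry-of-information step that converts this into the marginal lower bound on $K^A_b(w)$.

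For the geometric step I will show
$$K^A_{b-a, b}(e \mid x, w) \leq O(\log b).$$
The hypotheses $p_e x = p_e w$ (at precision $b$) and $|w - x| = 2^{-a}$ together force the component of $w - x$ along $e$ to have magnitude at most $O(2^{-b})$, so the unit vector $(w - x)/|w - x|$ makes an angle of at most $O(2^{-(b-a)})$ with the line perpendicular to $e$. An oracle Turing machine given $x$ and $w$ at precision $b$ can therefore form the rational difference, normalize (dividing the absolute error $2^{-b}$ by the length $2^{-a}$ to get relative precision $b - a$), rotate by $\pi/2$, and output $e$ at precision $b - a$. The overhead is $O(\log b)$, covering the precisions, the scale $a$ (which fits in $O(\log b)$ bits since $a \leq b$), and $O(1)$ bits to disambiguate the sign.

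For the symmetry-of-information step I will combine the geometric bound with the conditional chain rule
$$K^A_{b-a, b}(e, w \mid x) = K^A_b(w \mid x) + K^A_{b-a, b}(e \mid x, w) + O(\log b),$$
and the trivial inequality $K^A_{b-a, b}(e, w \mid x) \geq K^A_{b-a, b}(e \mid x) - O(\log b)$, to conclude $K^A_b(w \mid x) \geq K^A_{b-a, b}(e \mid x) - O(\log b)$. An unconditional invocation of symmetry of information then yields
$$K^A_b(w) + K^A_b(x \mid w) = K^A_b(x) + K^A_b(w \mid x) + O(\log b),$$
which rearranges to the claim once the $K^A_b(x \mid w)$ term is absorbed into the error. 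This last step is justified by the symmetric geometric reconstruction: given $w$ at precision $b$ together with $e$ at precision $b - a$ (itself already determined in this argument) and the scale $a$, one recovers $x = w \pm 2^{-a} e^{\perp}$ at precision $b$, so $K^A_b(x \mid w)$ is bounded by $K^A_{b-a, b}(e \mid w) + O(\log b)$ and is absorbed at the precision the lemma is used.

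The main obstacle is the precision bookkeeping. Two distinct precisions are in play throughout ($b$ for the points and $b - a$ for the direction), and each conditional chain rule application must be checked to ensure the total $O(\log b)$ error does not inflate. The delicate point is that the conditioning precision on $x$ in the expression $K^A_{b-a, b}(e \mid x)$ is $b$, not $b - a$, so the version of symmetry of information invoked must be the one whose conditioning precision matches the outer precision $b$ of the enclosing complexities; a sloppy substitution would leak an $O(b - a)$ term and destroy the bound.
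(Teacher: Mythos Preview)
The paper does not prove this lemma; it is quoted from \cite{LutStu20}. More importantly, the statement as printed here carries a typo: the right-hand side should have $K^A_a(x)$ in place of $K^A_b(x)$, i.e.,
\[
K^A_b(w) \;\geq\; K^A_a(x) + K^A_{b-a,b}(e \mid x) - O(\log b).
\]
This is confirmed by how the lemma is invoked two paragraphs later in the proof of Lemma~\ref{lem:pseudoYellow} (there with $s$ playing the role of $a$). The version you are trying to prove is in fact false: take $x$ random relative to $A$, take $e$ random relative to $(A,x)$, and set $w = x + 2^{-a}e^\perp$ with $a \ll b$. Then $K^A_b(x) + K^A_{b-a,b}(e\mid x) \approx 2b + (b-a) = 3b-a$, which exceeds the trivial ceiling $K^A_b(w) \leq 2b + O(\log b)$.

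Your geometric reconstruction of $e$ from $(x,w)$ and the chain-rule step yielding $K^A_b(w\mid x) \geq K^A_{b-a,b}(e\mid x) - O(\log b)$ are correct and are exactly the right ideas. The gap is the last move. After reaching
\[
K^A_b(w) \;\geq\; K^A_b(x) + K^A_{b-a,b}(e\mid x) - K^A_b(x\mid w) - O(\log b),
\]
you bound $K^A_b(x\mid w)$ by $K^A_{b-a,b}(e\mid w) + O(\log b)$ and then assert that this is ``absorbed at the precision the lemma is used.'' That assertion is unjustified; $K^A_{b-a,b}(e\mid w)$ can be of order $b-a$. The correct bound is simpler: since $|x-w| = 2^{-a}$, knowing $w$ at precision $b$ already pins down $x$ at precision $a$, so
\[
K^A_b(x\mid w) \;\leq\; K^A_{b,a}(x\mid x) + O(\log b) \;=\; K^A_b(x) - K^A_a(x) + O(\log b).
\]
Substituting this into your displayed inequality cancels $K^A_b(x)$ and yields the corrected statement directly.
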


\noindent Now, we are able to prove a preliminary lemma. 

\begin{lem}\label{lem:pseudoYellow}
    Let $x\in\mathbb{R}^2$, $e\in S^1$, $\sigma\in \mathbb{Q}\cap(0, 1]$, $c\in\mathbb{N}$, $A\subseteq\mathbb{N}$ and $a<b\in \mathbb{N}$. Assume $b$ is sufficiently large depending on $e$, $x$, and $\sigma$, and the following conditions are satisfied:
\begin{enumerate}
    \item $K_{s, b}^{A}(e\mid x)\geq \sigma s - c \log b$ for all $s\leq b-a$
    \item $K^A_{s, a}(x)\geq \sigma (s - a) - c \log b $ for all $a\leq s\leq b$
\end{enumerate}
Then
\begin{equation*}
K^A_{b,b, b, a}(x\mid p_e x, e, x)\leq K^A_{b, a}(x) - \sigma(b - a) + \frac{4c}{\sigma}\log b +3 (\log b)^2.
\end{equation*}
\end{lem}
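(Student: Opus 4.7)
The plan is to use Lemma \ref{lem:finite-oracle} to artificially cap the complexity of $x$ at precision $b$ at roughly $\sigma(b-a)$ bits, apply the enumeration lemma above to the reduced instance, and then undo the reduction using part (4) of Lemma \ref{lem:finite-oracle}. After the reduction, $x$ is "maximally constrained" at the yellow rate $\sigma$ on the interval $[a, b]$, which is precisely the regime in which the enumeration lemma furnishes near-optimal control.

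First, I select $\eta \in \Q^+$ with $\eta b = \sigma(b-a) - (c+2)\log b$. Assumption (2) at $s = b$ guarantees $\eta b < K^A_b(x)$, so applying Lemma \ref{lem:finite-oracle} to $(A, 2, b, x, \eta)$ produces an oracle $D$ with $K^{A,D}_b(x) = \eta b + O(\log b)$ (part (1)) and $K^{A,D}_{s,b}(e \mid x) = K^A_{s,b}(e \mid x) + O(\log b)$ (part (2)).

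Next I verify the hypotheses of the enumeration lemma with $B = D$ and $c_1 = c + O(1)$. Condition (1) is immediate. For condition (2), any $w \in B_{2^{-a}}(x)$ with $p_e w = p_e x$ and $s := -\log|x-w| \in (a, b]$ automatically satisfies $s \geq a$, so by assumption (1) of the present lemma and the part (2) transfer above,
\[
K^{A,D}_{b-s,b}(e \mid x) \geq \sigma(b-s) - (c + O(1))\log b.
\]
Combining with Lemma \ref{lem:intersectionLemmaProjections} relativized to $(A, D)$ yields
\[
K^{A,D}_b(w) \geq \eta b + \sigma(b-s) - O(\log b),
\]
so condition (2) holds. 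The enumeration lemma now gives
\[
K^{A,D}_{b,b,a}(x \mid e, x) \leq K^{A,D}_{b,b,a}(p_e x \mid e, x) + \tfrac{4(c+1)}{\sigma}\log b + O((\log b)^2).
\]
Since $p_e x$ is computable from $(x,e)$ at precision $b$ up to an $O(\log b)$ error, conditional symmetry of information converts this into
\[
K^{A,D}_{b,b,b,a}(x \mid p_e x, e, x) \leq \tfrac{4(c+1)}{\sigma}\log b + O((\log b)^2).
\]

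To conclude, I apply part (4) of Lemma \ref{lem:finite-oracle} to strip the oracle $D$; the cost is $K^A_{b,a}(x) - \eta b + O(\log b) = K^A_{b,a}(x) - \sigma(b-a) + O(\log b)$. Adding this to the preceding display and absorbing lower-order terms yields the claimed bound $K^A_{b,a}(x) - \sigma(b-a) + \tfrac{4c}{\sigma}\log b + 3(\log b)^2$. The main technical obstacle is this last step: part (4) is phrased for $w$ of uniform precision, whereas our target $K^A_{b,b,b,a}(x \mid p_e x, e, x)$ has mixed precisions ($b$ for $p_e x$ and $e$, $a$ for $x$). The proof therefore requires either a careful choice of the auxiliary $w$ in part (4) combined with a final monotonicity-in-precision estimate, or a direct multi-precision variant of part (4). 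The $(\log b)^2$ in the final error is inherited from the $c_2(\log b)^2$ branch of the $\min$ in condition (2) of the enumeration lemma, and the $\tfrac{4c}{\sigma}\log b$ term comes from the $c_1$-dependent bookkeeping in that same lemma.
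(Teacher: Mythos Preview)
Your overall strategy matches the paper's exactly: reduce the complexity of $x$ with the oracle $D$ of Lemma~\ref{lem:finite-oracle}, verify the two hypotheses of the enumeration lemma, apply it, pass to the conditional form via symmetry of information, and then remove $D$ via part~(4). The gap is in your choice of $\eta$.

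You set $\eta b = \sigma(b-a) - (c+2)\log b$, but the paper sets $\eta b = K^A_a(x) + \sigma(b-a) - 2(\log b)^2$. The missing $K^A_a(x)$ term is not cosmetic. When you strip $D$, the cost is governed by the information content of $D$, namely $K^A_b(x) - \eta b$ (this is exactly how the paper applies part~(4)). With your $\eta$ this equals
\[
K^A_b(x) - \sigma(b-a) + (c+2)\log b \;=\; K^A_a(x) + K^A_{b,a}(x) - \sigma(b-a) + O(\log b),
\]
which exceeds the target $K^A_{b,a}(x) - \sigma(b-a)$ by $K^A_a(x)$, a quantity that can be of order $a$ (hence linear in $b$). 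Your line ``the cost is $K^A_{b,a}(x) - \eta b + O(\log b)$'' is therefore incorrect; part~(4) does not hand you $K^A_{b,a}(x)$ in place of $K^A_b(x)$, and no monotonicity-in-precision trick recovers the lost $K^A_a(x)$.

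The paper's choice also matters for the verification of condition~(2) of the enumeration lemma. The working form of Lemma~\ref{lem:intersectionLemmaProjections} used in the proof gives $K^{A,D}_b(w)\geq K^{A,D}_s(x)+K^{A,D}_{b-s,b}(e\mid x)-O(\log b)$, and one must split on whether $\min\{\eta b, K^A_s(x)\}$ equals $\eta b$ or $K^A_s(x)$. In the latter case one only obtains $K^{A,D}_b(w)\geq K^A_a(x)+\sigma(b-a)-O(\log b)$, and it is precisely the inclusion of $K^A_a(x)$ in $\eta b$ together with the $-2(\log b)^2$ slack that turns this into $\eta b + (\log b)^2$, satisfying the $c_2(\log b)^2$ branch of condition~(2). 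Your $(c+2)\log b$ slack and your $\eta$ do not suffice here when $K^A_a(x)$ is small.

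In short: replace your $\eta b$ by $K^A_a(x)+\sigma(b-a)-2(\log b)^2$ and redo the case analysis for condition~(2); after that your sketch becomes the paper's proof.
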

\begin{proof}
We may assume $c\leq \frac{b}{\log b}$; otherwise the inequality is immediate. Let $\eta$ be such that $\eta b = K^A_a(x) + \sigma (b - a) - 2 (\log b)^2$ and let $D$ be the corresponding oracle of Lemma \ref{lem:finite-oracle}. First, note that there is some $C_1$ such that 
\begin{equation*}
    K^{A, D}_b(x) \leq \eta b + C_1 \log b,
\end{equation*}
so the first condition of the enumeration lemma is satisfied. To see that the second is also satisfied, for each $w\in B_{2^{-a}}(x)$ such that $w$ agrees with $x$ up to precision $s$, 
\begin{align*}
    K^{A, D}_b(w)&\geq K_s^{A, D}(x) + K_{b-s, b}^{A, D}(e\mid x) - O(\log b)\\
    &\geq K_s^{A, D}(x) + K_{b-s, b}^{A}(e\mid x) - O(\log b)\\
    &\geq K_s^{A, D}(x) + \sigma (b - s) - c \log b - O(\log b)\\
    &=\min \{\eta b, K_s^A(x) \}+ \sigma (b - s) - c \log b - O(\log b). 
\end{align*}
If the first term is the smaller, letting $C=\max \{C_1, C_2\}$
\begin{align*}
    K^{A, D}_b(w)&\geq \eta b+ \sigma (b - s) - (C_2+c)\log b\\
    &\geq \eta b+ \sigma (b - s) -  (C + c) \log b. 
\end{align*}
So both conditions of the enumeration lemma hold. If the second term is smaller, 
\begin{align*}
    K^{A, D}_b(w)&\geq K_s^A(x)+ \sigma (b - s) - O(\log b)\\
    &= K_a^A(x) +  K_{s, a}^A(x)+ \sigma (b - s) - O(\log b).
\end{align*}
Applying the second condition gives 
\begin{align*}
    K^{A, D}_b(w)&\geq K_a^A(x)+ \sigma (b - a) - O(\log b)\\
    &= \eta b + 2(\log b)^2 - O(\log r)\\
    &\geq \eta b + (\log b)^2
\end{align*}
\noindent and again both conditions of the enumeration lemma hold. Applying it yields
\begin{equation*}
K^{A, D}_{b, b, a}(x\mid e, x)\leq K^{A, D}_{b, b, a}(p_e \mid e, x) + (\frac{4(C+ c+1)}{\sigma} + C_{x, e})\log b + K^{A, D}(\eta, C+ c).
\end{equation*}
Collecting the terms, we have
\begin{align*}
K^{A, D}_{b, b, a}(x\mid e, x)&\leq K^{A, D}_{b, b, a}(p_e x\mid e, x) + \frac{4c}{\sigma}\log b + K^{A, D}(\eta, C+c) + O_{x, e, \sigma}(\log b)\\
&\leq K^{A, D}_{b, b, a}(p_e x\mid e, x)+ \frac{4c}{\sigma}\log b + K(\eta, C+c) + O_{x, e, \sigma}(\log b)\\
&\leq K^{A, D}_{b, b, a}(p_ex \mid e, x)+ \frac{4c}{\sigma}\log b + K(\eta) +  K(C+c) + O_{x, e, \sigma}(\log b)\\
&\leq K^{A, D}_{b, b, a}(p_ex \mid e, x)+ \frac{4c}{\sigma}\log b + K(\eta) + K(c) + O_{x, e, \sigma}(\log b).
\end{align*}
Now observe that, by definition, $\eta$ can be computed from $K^A_a(x)$, $\sigma$, $b$, and $a$. We may assume $b$ is large enough that $K^A_a(x)<3b$. Since $a<b$, with the above and symmetry of information, this implies $K(\eta) \leq O_\sigma(\log b)$. Furthermore, $c\leq \frac{b}{\log b}$ implies $K(c)\leq O(\log b)$, hence
\begin{equation}\label{eq:prelimSoI}
K^{A, D}_{b, b, a}(x\mid e, x)\leq K^{A, D}_{b, b, a}(p_e x\mid e, x) + \frac{4c}{\sigma}\log b+ O_{x, e, \sigma}(\log b).
\end{equation}
By symmetry of information, 
\begin{equation*}
    K^{A, D}_{b,b, b, a}(x\mid p_e x, e, x) = K^{A, D}_{b, b, a}(x\mid e, x) - K^{A, D}_{b, b, a}(p_e x \mid e, x) + O(\log b).
\end{equation*}
In conjunction with \eqref{eq:prelimSoI}, this implies 
\begin{equation}
   K^{A, D}_{b,b, b, a}(x\mid p_e x, e, x) \leq \frac{4c}{\sigma}\log b+ O_{x, e, \sigma}(\log b). 
\end{equation}
It remains to remove the oracle $D$. By the properties of $D$ and the definition of $\eta$, 
\begin{align*}
   K^{A}_{b,b, b, a}(x\mid p_e x, e, x) &\leq   K^{A, D}_{b,b, b, a}(x\mid p_e x, e, x) + K^A_b(x) - \eta b + O(\log b)\\
   &\leq  K^A_b(x) - \eta b + \frac{4c}{\sigma}\log b+ O_{x, e, \sigma}(\log b)\\ 
   &=  K^A_a(x) + K^A_{b, a}(x)  - \eta b + \frac{4c}{\sigma}\log b+ O_{x, e, \sigma}(\log b)\\
    &=  K^A_{b, a}(x) - \sigma(b-a) + 2 (\log b)^2+ \frac{4c}{\sigma}\log b+ O_{x, e, \sigma}(\log b).
\end{align*}
Hence, if $b$ is sufficiently large, 
\begin{equation*}
K^A_{b,b, b, a}(x\mid p_e x, e, x)\leq K^A_{b, a}(x) - \sigma(b - a) + \frac{4c}{\sigma}\log b +3 (\log b)^2.
\end{equation*}
\end{proof}

\noindent Similarly, we have the following lemma, corresponding to teal intervals.

\begin{lem}\label{lem:pseudoTeal}
    Let $x\in\mathbb{R}^2$, $e\in S^1$, $\sigma\in \mathbb{Q}\cap(0, 1]$, $c\in\mathbb{N}$, $A\subseteq\mathbb{N}$ and $a<b\in \mathbb{N}$. Assume $b$ is sufficiently large depending on $e$, $x$, and $\sigma$, and the following conditions are satisfied:
\begin{enumerate}
    \item $K_s^{A}(e\mid x)\geq \sigma s - c \log b$ for all $s\leq b-a$
    \item $K^A_{b, s}(x)\leq \sigma (b - s) + c \log b $ for all $a\leq s\leq b$
\end{enumerate}
Then
\begin{equation*}
K^A_{b,b, b, a}(x\mid p_e x, e, x)\leq \frac{4c}{\sigma}\log b +3 (\log b)^2
\end{equation*}
\end{lem}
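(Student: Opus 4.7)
The plan is to follow the template used in the proof of Lemma \ref{lem:pseudoYellow}, adjusting only the choice of $\eta$ to suit the teal (rather than yellow) complexity profile of $x$. Since the teal hypothesis gives an upper bound on $K^A_{b,a}(x)$ rather than a lower bound, it is natural to set $\eta b \approx K^A_b(x)$, which makes the oracle-removal step at the end essentially free; this matches the fact that the conclusion no longer contains a $K^A_{b,a}(x) - \sigma(b-a)$ term.

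Specifically, I would choose $\eta \in \mathbb{Q}^+$ with $\eta b \geq K^A_b(x) + 2\log b$ and $\eta b - K^A_b(x) = O(\log b)$, noting that $K(\eta) = O(\log b)$ since $\eta$ is computable from $K^A_b(x)$ and $b$. Let $D = D(A, 2, b, x, \eta)$ be the oracle supplied by Lemma \ref{lem:finite-oracle}. Property (1) of that lemma gives $K^{A,D}_b(x) = K^A_b(x) + O(\log b) \leq \eta b + O(\log b)$, verifying the first hypothesis of the enumeration lemma. For the second, fix $w \in B_{2^{-a}}(x)$ with $p_e w = p_e x$ and $s = -\log|x-w| \in (a, b]$. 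Combining Lemma \ref{lem:intersectionLemmaProjections}, property (2) of Lemma \ref{lem:finite-oracle}, and hypothesis (1) of the present lemma yields
\begin{equation*}
K^{A,D}_b(w) \geq K^{A,D}_b(x) + \sigma(b - s) - (c + O(1))\log b \geq \eta b + \sigma(b - s) - (c + O(1))\log b,
\end{equation*}
which verifies the second hypothesis with $c_1 = c + O(1)$ and any sufficiently large $c_2$.

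Applying the enumeration lemma, invoking symmetry of information (to cancel the $K^{A,D}_{b,b,a}(p_e x \mid e, x)$ term), and then using property (4) of Lemma \ref{lem:finite-oracle} to remove $D$ gives
\begin{equation*}
K^A_{b,b,b,a}(x \mid p_e x, e, x) \leq \frac{4c}{\sigma}\log b + O_{x, e, \sigma}(\log b) + \left(K^A_b(x) - \eta b\right).
\end{equation*}
By the choice of $\eta$, the last term is $O(\log b)$, so for $b$ sufficiently large the right-hand side is at most $\frac{4c}{\sigma}\log b + 3(\log b)^2$.

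The main subtlety, compared with the yellow case, is that the teal profile does not furnish a useful lower bound on $K^A_s(x)$ for intermediate $s \in (a, b)$, so the case analysis in Lemma \ref{lem:pseudoYellow} (which splits on whether the minimum in property (1) of Lemma \ref{lem:finite-oracle} equals $\eta b$ or $K^A_s(x)$) is unavailable. Choosing $\eta b$ near $K^A_b(x)$ bypasses this by routing the lower bound on $K^{A,D}_b(w)$ through the intersection lemma applied at precision $b$ directly, and the resulting small oracle-removal cost is absorbed by the $3(\log b)^2$ slack in the stated bound.
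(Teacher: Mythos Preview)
There is a genuine gap in the verification of condition~(2) of the enumeration lemma. You invoke Lemma~\ref{lem:intersectionLemmaProjections} to obtain $K^{A,D}_b(w)\geq K^{A,D}_b(x)+\sigma(b-s)-(c+O(1))\log b$, but that lemma (as actually applied in the proof of Lemma~\ref{lem:pseudoYellow}) yields only
\[
K^{A,D}_b(w)\;\geq\;K^{A,D}_s(x)+K^{A,D}_{b-s,b}(e\mid x)-O(\log b),
\]
with $K^{A,D}_s(x)$, not $K^{A,D}_b(x)$, on the right. (The displayed statement of Lemma~\ref{lem:intersectionLemmaProjections} in the paper has $K^A_b(x)$, but this is evidently a misprint: take $e$ computable, $x=(x_1,x_2)$ with $x_1$ computable and $x_2$ random, and $w=(x_1,0)$ to see the $K_b(x)$ version is false.) With the correct form and your choice $\eta b\approx K^A_b(x)$ (which renders $D$ essentially trivial), the best you get is $K^A_b(w)\geq K^A_s(x)+\sigma(b-s)-O(\log b)$. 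Feeding in the teal bound $K^A_s(x)\geq K^A_b(x)-\sigma(b-s)-O(\log b)$ gives only $K^A_b(w)\geq K^A_b(x)-O(\log b)\approx \eta b - O(\log b)$, which falls \emph{below} $\eta b$ and so cannot meet the enumeration lemma's requirement $K^{A,D}_b(w)\geq \eta b+\min\{c_2(\log b)^2,\sigma(b-s)-c_1\log b\}$. A further symptom: your argument never uses hypothesis~(2) of the lemma, and your claim that the teal profile ``does not furnish a useful lower bound on $K^A_s(x)$'' is incorrect---symmetry of information turns the upper bound on $K^A_{b,s}(x)$ into exactly such a lower bound.

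The paper's hint $\eta b=K^A_b(x)-2(\log b)^2$ is chosen precisely to create the missing margin. With this $\eta$, in the case $K^A_s(x)\geq\eta b$ one gets $K^{A,D}_b(w)\geq\eta b+\sigma(b-s)-(c+O(1))\log b$ directly; in the case $K^A_s(x)<\eta b$, the teal hypothesis gives $K^{A,D}_b(w)\geq K^A_b(x)-O(\log b)=\eta b+2(\log b)^2-O(\log b)\geq\eta b+(\log b)^2$, furnishing the $c_2(\log b)^2$ alternative. The oracle-removal step then costs $K^A_b(x)-\eta b=2(\log b)^2$, which is absorbed into the $3(\log b)^2$ term in the conclusion.
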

We omit the proof, since once one picks $\eta$ such that $\eta b = K^A_b(x) - 2(\log b)^2$, the argument is very similar to that of Lemma \ref{lem:pseudoYellow}. With these two lemmas, we are in a position to prove Lemma \ref{lem:projectionYellowTeal}. 

\begin{proof}
    Lemma \ref{lem:projectionYellowTeal} assumes the first condition of Lemmas \ref{lem:pseudoYellow} and \ref{lem:pseudoTeal}, so it remains to check the second. Assume $b$ is sufficiently large and that $s\in[\lceil a\rceil, \lceil b\rceil]$. If $[a, b]$ is $(\sigma, c)$-yellow, then 
\begin{equation*}
    K^A_{s,\lceil a\rceil}(x\mid x) \geq K^A_{s, a}(x\mid x) + \log b \geq  \sigma(s-a) - (c+1) \log b,
\end{equation*}
    If $[a, b]$ is $(\sigma, c)$-teal, then  
\begin{equation*}
       K^A_{\lceil b\rceil, s}(x\mid x) \leq K^A_{b, s}(x\mid x)+ \log b\leq \sigma(b-s) + (c+1) \log b,
\end{equation*}
In both cases, we apply our the respective preliminary lemmas, and note that when we can assume $b$ is sufficiently large depending on $x, e$, and $\sigma$, we have that
\begin{equation*}
    \frac{4c}{\sigma}\log b +3 (\log b)^2\leq O_c(\log b)^2.
\end{equation*}
\end{proof}

\section{Small sets of directions}\label{sec:smallsets}
In this section, we will define a family of small sets of directions. These will be the universal sets described in the introduction.

We will fix the following rapidly increasing sequence of natural numbers. Let $r_1 = 2$, and inductively define $r_{n+1} = 2^{2^{r_n}}$. Define the set 
\begin{center}
    $\ML := \{\theta \in (0, \pi) \mid (\exists c) \; K_r(\theta) \geq r - c\log r \text{ for every } r\in\N\}$
\end{center}

We will now use construct sets of directions by modifying the bits of the angles in $\ML$.  We begin by defining $\mathcal{D}_0$. For each $\theta \in \ML$, let $d_\theta$ to be the real whose binary expansion is given by 
\begin{align*}
d_\theta[r] = \begin{cases}
0 &\text{ if } r_n < r \leq n r_n \text{ for some } n \in \N \\
\theta[r] &\text{ otherwise}
\end{cases}
\end{align*}
Let $\mathcal{D}_0 \subseteq \mathcal{S}^1$ be the set 
\begin{equation}
    \mathcal{D}_0 = \{ e \in \mathcal{S}^1 \mid e = (\cos d_\theta, \sin d_\theta), \; \theta \in \ML\}
\end{equation}

Let $A\subseteq\N$ be an oracle. We write $e \in \mathcal{D}_0(A)$ if there is a constant $c$ such that following conditions hold.
\begin{enumerate}
    \item For every $n\in\N$, and every $r \leq r_n$,
    \begin{center}
    $K^{A}_r(e) \geq r - c\log r_n$
\end{center}
    \item For every $n\in\N$ and every $nr_n \leq r \leq r_{n+1}$,
    \begin{center}
    $K^{A}_r(e) \geq r - (n-1)r_n - c\log r$.
\end{center}
\end{enumerate}

Our next lemma collects the facts about $\mathcal{D}_0$ we will need in the proof of Theorems \ref{thm:BourgainUniversal} and \ref{thm:universalSetsADRegular}.
\begin{lem}\label{lem:propertiesOfD0}
The lower box (and hence Hausdorff) dimension of $\mathcal{D}_0$ is zero. For every countable sequence of oracles $A_i\subseteq \N$, there is a direction $e\in \mathcal{D}_0$ such that $e \in \mathcal{D}_0(A_i)$ for every $i\in \N$.
\end{lem}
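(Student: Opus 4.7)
The plan splits naturally into proving the box-dimension bound and then constructing the witness direction $e$.

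For $\underline{\dim}_B(\mathcal{D}_0)=0$, I would exploit the specific scale $\delta_n = 2^{-nr_n}$. At this scale the first $nr_n$ bits of $d_\theta$ are entirely determined by the bits of $\theta$ at the ``free'' positions $\{1,\ldots,nr_n\} \setminus \bigcup_{m=2}^n (r_m, mr_m]$, since the bits inside the zeroed intervals are forced to $0$. A direct count shows at most $r_n$ free positions, because the zeroed bits up through $nr_n$ total $\sum_{m=2}^n (m-1)r_m = (n-1)r_n + O(r_{n-1})$. Hence there are at most $2^{r_n}$ distinct length-$nr_n$ prefixes of $\{d_\theta : \theta \in \ML\}$, and composing with the $O(1)$-Lipschitz map $t \mapsto (\cos t, \sin t)$ yields a cover of $\mathcal{D}_0$ by $O(2^{r_n})$ balls of radius $O(2^{-nr_n})$. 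Along this subsequence, $\log N(\mathcal{D}_0,\delta_n)/\log(1/\delta_n) \leq (r_n+O(1))/(nr_n \log 2) \to 0$, so $\underline{\dim}_B(\mathcal{D}_0) = 0$.

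For the oracle statement, I would select $\theta$ by a measure-theoretic argument. For each fixed $i$, the set of $\theta \in (0,\pi)$ satisfying $K^{A_i}_r(\theta) \geq r - c\log r$ for all $r$ (with $c$ depending on $\theta$ and $i$) has full Lebesgue measure; the intersection over the countable family $\{A_i\}$ therefore also has full measure, so I can fix $\theta$ lying in it (necessarily $\theta \in \ML$) and set $e := (\cos d_\theta, \sin d_\theta) \in \mathcal{D}_0$. The central complexity inequality is then
\[K^{A_i}_r(d_\theta) \;\geq\; K^{A_i}_r(\theta) - Z(r) - O(\log r),\]
where $Z(r)$ is the number of zeroed positions in $\{1,\ldots,r\}$. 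This is immediate from the fact that $\theta$ at precision $r$ can be reconstructed from $d_\theta$ at precision $r$ together with the length-$Z(r)$ bitstring listing $\theta$'s values at the zeroed positions (with $O(\log r)$ overhead to encode $r$ and the construction rule). Since $(\cos,\sin)$ is smooth and bi-Lipschitz on neighborhoods away from its critical points, $K^{A_i}_r(e) = K^{A_i}_r(d_\theta) + O(\log r)$, and the lower bound propagates to $e$.

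It then suffices to check that $Z(r)$ is small enough in each regime. The key arithmetic input is $\log r_{n+1} = 2^{r_n}$, which makes $\log r_n$ dominate any polynomial in $r_{n-1}$ uniformly in $n$. For condition (1), if $r \leq r_n$ then $Z(r) \leq \sum_{m=2}^{n-1}(m-1)r_m = O(nr_{n-1}) = O(\log r_n)$ uniformly in $n$. For condition (2), if $nr_n \leq r \leq r_{n+1}$ then $Z(r) = (n-1)r_n + O(r_{n-1})$, and because $\log r \geq \log r_n \geq r_{n-1}$ the residual is $O(\log r)$. Plugging into the displayed inequality and absorbing the $c_i\log r$ term produced by the $A_i$-randomness of $\theta$ gives (1) and (2) with a single constant $c$ depending on $i$.

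The main obstacle is the bookkeeping of constants: the constant $c$ in the definition of $\mathcal{D}_0(A_i)$ must work uniformly in $n$ for both conditions simultaneously, while also absorbing the randomness constant $c_i$. The super-exponential tower $r_{n+1} = 2^{2^{r_n}}$ is precisely what creates enough slack between $(n-1)r_n$ and $\log r_n$ (and between the residual zeroing costs and $\log r$) for such a uniform $c=c(i)$ to exist; identifying and verifying this scale separation is the real content of the argument.
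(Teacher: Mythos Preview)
Your proposal is correct and follows essentially the same approach as the paper's proof. Both arguments cover $\{d_\theta\}$ at scale $2^{-nr_n}$ by $2^{r_n}$ intervals for the box-dimension claim, pick $\theta$ random relative to every $A_i$ via a measure/countable-intersection argument, and then transfer complexity lower bounds from $\theta$ to $d_\theta$ (and hence to $e$) by accounting for the zeroed bits; the only cosmetic difference is that for condition~(2) the paper splits $K^{A_i}_r(d_\theta)$ as $K^{A_i}_{r,nr_n}(d_\theta)+K^{A_i}_{r_n}(d_\theta)$ and identifies each piece with the corresponding $\theta$-complexity, whereas you bound everything at once via the single zero-count $Z(r)$.
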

\begin{proof}
    To see that the lower box dimension of $\mathcal{D}_0$ is 0, note that the map from $d_\theta $ to $e$ is bi-Lipschitz. Hence, it suffices to show that the lower box dimension of $\{d_\theta: \theta\in (0, \pi)\}$ is 0. By the definition of $d_\theta$, this set can be covered by $2^{r_n}$ intervals of length $2^{-n r_n}$ for all $n$. Since $\frac{r_n}{n r_n}$ goes to 0, the lower box dimension is indeed 0. 
    
    Note that, for any $a\in (0, \pi)$, if $e \in \mathcal{S}^1$ such that $e = (\cos a, \sin a)$, then $K_r(e) = K_r(a) +O(\log r)$. Let $\{A_i\}$ be a countable sequence of oracles. Then there is an angle $\theta \in \ML$ and constants $c^\prime_i$, $i\in\N$, such that $K^A_r(\theta) \geq r-c^\prime_i\log r$ for every $r\in\N$. Let $e \in \mathcal{D}_0$ such that $e = (\cos d_\theta, \sin d_\theta)$. Fix an $i \in\N$, and let $r\in \N$. Suppose that $r \leq r_n$, for some $n\in\N$. Then we see that
    \begin{align*}
        K^{A_i}_r(e) &\geq K^{A_i}_r(d_\theta) - O(\log r)\\
        &\geq K^A_r(\theta) - O(\log r_n)\\
        &\geq r - O(\log r_n).
    \end{align*}
    Now suppose that $nr_n \leq r \leq r_{n+1}$. 
    \begin{align*}
        K^A_r(e) &\geq K^A_r(d_\theta) - O(\log r)\\
        &\geq K^A_{r, nr_n}(d_\theta) +K^A_{r_n}(d_\theta) - O(\log r)\\
        &\geq K^A_{r, nr_n}(\theta) +K^A_{r_n}(\theta) - O(\log r)\\
        &\geq r -(n-1)r_n - O(\log r),
    \end{align*}
    and the proof is complete.
\end{proof}

Let $0 < s < 1$. For each $\theta \in \ML$, let $d^s_\theta$ to be the real whose binary expansion is given by 
\begin{align*}
d^s_\theta[r] = \begin{cases}
0 &\text{ if } r_n < r \leq \lfloor r_n / s \rfloor \text{ for some } n \in \N \\
\theta[r] &\text{ otherwise}
\end{cases}
\end{align*}
Let $\mathcal{D}_s \subseteq \mathcal{S}^1$ be the set 
\begin{equation}
    \mathcal{D}_s = \{e = (\cos d^s_\theta,\sin d^s_\theta) \mid \theta \in \ML\}
\end{equation}

Let $A\subseteq\N$ be an oracle. We write $e \in \mathcal{D}_s(A)$ if there is a constant $c$ such that following conditions hold.
\begin{enumerate}
    \item For every $n\in\N$, and every $r \leq r_n$,
    \begin{center}
    $K^{A}_r(e) \geq r - c\log r_n$
\end{center}
    \item For every $n\in\N$ and every $\lfloor r_n/s\rfloor \leq r \leq r_{n+1}$,
    \begin{center}
    $K^{A}_r(e) \geq r - \lfloor \frac{1-s}{s}\rfloor r_n - c\log r$.
\end{center}
\end{enumerate}
Our next lemma collects the properties of $\mathcal{D}_s$ we will need in the proofs of Theorem \ref{thm:CountablyUniversalWeaklyRegular}. 
\begin{lem}\label{lem:propertiesOfDs}
    The lower box dimension of $\mathcal{D}_s$ is $s$. For every countable sequence of oracles $A_i\subseteq \N$, there is a direction $e\in \mathcal{D}_s$ such that $e \in \mathcal{D}_s(A_i)$ for every $i\in \N$.
\end{lem}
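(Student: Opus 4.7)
The plan is to mirror the structure of Lemma \ref{lem:propertiesOfD0}, handling the box dimension upper bound, the existence claim, and the box dimension lower bound in turn. For the upper bound on $\underline{\dim}_B \mathcal{D}_s$, I would observe that the map $d^s_\theta \mapsto (\cos d^s_\theta, \sin d^s_\theta)$ is bi-Lipschitz, reducing the problem to bounding the lower box dimension of $\{d^s_\theta : \theta \in (0,\pi)\}$. Since the bits of $d^s_\theta$ on each interval $(r_m, \lfloor r_m/s\rfloor]$ are forced to zero, this set can be covered by $2^{r_n}$ intervals of length $2^{-\lfloor r_n/s\rfloor}$ for every $n$. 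Because $r_n / \lfloor r_n/s\rfloor \to s$, this gives $\underline{\dim}_B \mathcal{D}_s \leq s$.

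For the existence claim, I would first invoke a standard measure-theoretic argument: the set of reals that are Martin-L\"of random relative to a single oracle has full Lebesgue measure, and a countable intersection of such sets is still of full measure. Hence there exists $\theta \in \ML$ with constants $c'_i$ such that $K^{A_i}_r(\theta) \geq r - c'_i \log r$ for every $i \in \N$ and every $r$. Set $e = (\cos d^s_\theta, \sin d^s_\theta) \in \mathcal{D}_s$; we then need to verify both complexity conditions in the definition of $\mathcal{D}_s(A_i)$.

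The crucial estimate is that the total length of zero intervals contained in $[1, r_n]$ is $\sum_{m<n}(\lfloor r_m/s \rfloor - r_m) = O(n r_{n-1}) = O(\log r_n)$, where the last equality uses the doubly-exponential growth $r_n = 2^{2^{r_{n-1}}}$ (so $r_{n-1} = \log \log r_n$). Consequently, from $d^s_\theta$ at precision $r \leq r_n$ one can recover $\theta$ at precision $r$ using $O(\log r_n)$ extra bits, giving $K^{A_i}_r(e) \geq K^{A_i}_r(\theta) - O(\log r_n) \geq r - O(\log r_n)$, which verifies condition (1). For condition (2), at $\lfloor r_n/s\rfloor \leq r \leq r_{n+1}$, I would apply symmetry of information to split at precision $\lfloor r_n/s\rfloor$: the bits of $d^s_\theta$ on $(\lfloor r_n/s\rfloor, r]$ are genuine $\theta$-bits, so this block contributes $r - \lfloor r_n/s\rfloor - O(\log r)$ by the randomness of $\theta$, while the lower-precision piece contributes $r_n - O(\log r_n)$ by the argument just given. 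Summing, $K^{A_i}_r(e) \geq r - (\lfloor r_n/s\rfloor - r_n) - O(\log r)$, which is the desired bound.

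Finally, the lower bound on lower box dimension follows from $\dim_H \leq \underline{\dim}_B$. Applying the second claim to the constant sequence $A_i = A$ for any fixed oracle $A$ produces $e \in \mathcal{D}_s(A)$; the two complexity conditions imply $\liminf_r K^A_r(e)/r = s$, with the minimum ratio approached at precisions near $r = \lfloor r_n/s\rfloor$. Hence $\dim^A(e) = s$ for every $A$, and the point-to-set principle (Theorem \ref{thm:pointToSet}) gives $\dim_H(\mathcal{D}_s) \geq s$. The main obstacle I anticipate is tight bookkeeping of the rounding and error terms, especially matching the stated form $r - \lfloor (1-s)/s\rfloor r_n - c\log r$ in the definition of $\mathcal{D}_s(A)$ with the natural estimate $r - (\lfloor r_n/s\rfloor - r_n) - O(\log r)$ coming from the construction.
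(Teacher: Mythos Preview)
Your proposal is correct and follows essentially the same route as the paper: bi-Lipschitz reduction for the upper bound, the point-to-set principle for the lower bound, and the same complexity splitting for the existence claim. Your explicit computation that the total zeroed length inside $[1,r_n]$ is $O(n r_{n-1}) = O(\log r_n)$ is exactly what the paper uses implicitly, and your flagged mismatch between $\lfloor (1-s)/s\rfloor r_n$ and $\lfloor r_n/s\rfloor - r_n$ is a genuine bookkeeping wrinkle in the paper's statement (the paper's own proof writes ``$r - (n-1)r_n$'' at that step, evidently a carry-over from Lemma~\ref{lem:propertiesOfD0}; the intended quantity is $r - \tfrac{1-s}{s}r_n - O(\log r)$).
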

\begin{proof}
    
Repeating the first portion of the argument in Lemma \ref{lem:propertiesOfD0} implies that this set has lower box dimension at most $s$. To see the lower box dimension is exactly $s$, it suffices to lower bound the Hausdorff dimension by $s$. let $A\subseteq \N$ be an oracle. Let $\theta \in ML$ and $c>0$ such that $K^A_r(\theta) > r -c\log r$ for every $r\in\N$. Note that one exists. Let $e \in \mathcal{D}_s$ such that $e = (\cos d^s_\theta, \sin d^s_\theta)$. The same argument as the previous paragraph shows that 
    \begin{align*}
         K^A_{r_n}(e) &= K_{r_n}(\theta) + O(\log(r_n))\\
         &= r_n + O(\log r_n),
    \end{align*}
    for every $n\in\N$. Let $r\in\N$ be sufficiently large. We first suppose that $r_n \leq r < r_n/s$. Then we see from the above equality that $K^A_{r}(e) \geq r_n + O(\log r_n)$, and so $ K^A_r(e) \geq sr - o(r)$.

    If $r_n/s \leq r < r_{n+1}$, then 
    \begin{align*}
        K^A_r(e) &\geq K^A_{r, r_n/s}(\theta) + K^A_{r_n}(\theta) - O(\log r)\\
        &= r - \left(\frac{1-s}{s}\right)r_n,
    \end{align*}
   and so $ K^A_r(e) \geq sr - o(r)$ in this case as well.

    Let $\{A_i\}$ be a countable sequence of oracles. Then there is an angle $\theta \in \ML$ and constants $c^\prime_i$, $i\in\N$, such that $K^A_r(\theta) \geq r-c^\prime_i\log r$ for every $r\in\N$. Let $e \in \mathcal{D}_s$ such that $e = (\cos d^s_\theta, \sin d^s_\theta)$. Fix an $i \in\N$, and let $r\in \N$. Suppose that $r \leq r_n$, for some $n\in\N$. Then we see that
    \begin{align*}
        K^{A_i}_r(e) &\geq K^{A_i}_r(d_\theta) - O(\log r)\\
        &\geq K^A_r(\theta) - O(\log r_n)\\
        &\geq r - O(\log r_n).
    \end{align*}
    Now suppose that $r_n/s \leq r \leq r_{n+1}$. Then
    \begin{align*}
        K^A_r(e) &\geq K^A_r(d^s_\theta) - O(\log r)\\
        &\geq K^A_{r, r_n/s}(\theta) +K^A_{r_n}(\theta) - O(\log r)\\
        &\geq r -(n-1)r_n - O(\log r),
    \end{align*}
    and the proof is complete.
\end{proof}

\section{Universal sets under regularity assumptions}\label{sec:UniversalSetsRegularity}
In this section, we prove the main results of the paper. Namely, we show that assuming sufficiently regularity of the class gives smaller universal sets of directions for that class.

\subsection{Weakly regular sets}\label{ssec:WeaklyRegular}
We begin with the class of weakly regular sets. Recall that a set $E\subseteq\R^2$ is weakly regular if $\dim_H(E) = \dim_H(P)$. 

As we are using the machinery of effective dimension, our main theorem follows from its point-wise analog.
\begin{prop}\label{prop:effThmWeaklyRegular}
Let $A,B\subseteq\N$, $x\in \R^2$, $e\in \mathcal{S}^1$, $\alpha \in (0,2]$, $0 < s < 1$ and $\ve > 0$. Assume that the following conditions hold.
\begin{enumerate}
    \item $\alpha - \ve \leq \dim^A(x) \leq \Dim^A(x)\leq \alpha$.
    \item $e \in \mathcal{D}_s(A)$.
    \item $K^{A, B, e}_s(x) \geq K^A_r(x) -\ve r$ for every sufficiently large $r\in\N$.
\end{enumerate}
Then 
\begin{center}
    $\dim^{A,B,e}(p_e x) \geq \min\{\alpha, 1\} - 25\ve r$
\end{center}
\end{prop}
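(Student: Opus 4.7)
The plan is to show that $K^{A,B,e}_r(p_e x) \geq (\min\{\alpha,1\} - O(\ve)) r$ for every sufficiently large $r$; by definition this will yield the desired lower bound on $\dim^{A,B,e}(p_e x)$. Symmetry of information reduces this to two estimates: first $K^{A,B,e}_r(x) \geq K^A_r(x) - \ve r$, which is immediate from hypothesis (3), and second the upper bound $K^A_r(x\mid p_e x, e) \leq \max\{\alpha - 1, 0\} r + O(\ve r)$. Hypothesis (1) combined with \eqref{eq:caseLutz} gives $K^A_b(x) \in [(\alpha - 2\ve) b, (\alpha + \ve) b]$ for large $b$, so the whole difficulty lies in bounding $K^A_r(x\mid p_e x, e)$.

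Before doing that I would extract the structural consequences of the hypotheses. The bounds on $K^A_b(x)$ above make every sufficiently long interval $[a, b]$ simultaneously $(\alpha, O(\ve))$-almost yellow and $(\alpha, O(\ve))$-almost teal. From (3) and symmetry of information, $K^A_{t,r}(e\mid x) \geq K^A_t(e) - \ve r - O(\log r)$ for every $t \leq r$. Letting $n$ be the largest integer with $r_n \leq r$, condition (2) gives $K^A_t(e) \geq t - O(\log r_n)$ for $t \leq r_n$, and $K^A_t(e) \geq t - \tfrac{1-s}{s} r_n - O(\log t)$ for $t \in [\lfloor r_n/s\rfloor, r_{n+1}]$; on the ``dead zone'' $t \in (r_n, \lfloor r_n/s\rfloor)$ the direction complexity is only of order $r_n$.

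For the main estimate I would split into two regimes. When $r/r_n$ is bounded by a constant depending on $\alpha$ and $s$, Theorem \ref{thm:conditionalProjectionBounds} applied with $t = r_n$ gives the bound directly: the hypothesis on $e$ is satisfied with slack $O(\ve r)$, the multiplicative error $10(r/r_n)\ve r$ is $O(\ve r)$, and the middle term $(K^A_r(x) - r_n)/2$ is absorbed into $\max\{\alpha-1, 0\} r + O(\ve r)$ because $r_n$ is a definite fraction of $r$. When $r \gg r_n$, I would partition $[0,r]$ into $[0, r_n]$, $[r_n, \lfloor r_n/s\rfloor]$, and $[\lfloor r_n/s\rfloor, r]$ and use Lemma \ref{lem:almostProjectionYellowTeal} on each piece to bound $K^A_{b,b,b,a}(x\mid p_e x, e, x)$. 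On the outer two pieces the direction bound is essentially maximal, so the lemma applies with $\sigma = \min\{\alpha, 1\}$ and yields $O(\ve b)$. On the dead zone piece only $\sigma \leq s$ is valid, but the teal case of the lemma still yields $O(\ve b)$, and the length $r_n(1-s)/s$ of this sub-interval is itself $O(\ve r)$ in the large-$r$ regime thanks to the doubly exponential spacing $r_{n+1} = 2^{2^{r_n}}$. Summing via symmetry of information completes the bound on $K^A_r(x\mid p_e x, e)$.

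The chief obstacle is reconciling the $\log b$ precision on the direction required by Lemma \ref{lem:almostProjectionYellowTeal} with the $\ve r$ slack produced by (3). I would handle this by introducing an auxiliary oracle $D$ via Lemma \ref{lem:finite-oracle} that sharpens $K^A_r(x)$ to a controlled threshold near $(\alpha - \ve) r$, so that the direction inequality can be rewritten with only logarithmic error while (1)--(3) are preserved up to $O(\ve r)$ additive corrections. The other delicate point is choosing $\sigma = s - O(\ve)$ on the dead-zone sub-interval so that the lemma's hypothesis on $e$ is compatible with the almost-teal inequality for $x$ there; with these devices the three-piece summation and the reduction of the first paragraph give the claim.
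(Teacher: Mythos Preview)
Your two-regime split does not cover all $r$, and the failure is in Regime~1. Take $\alpha = 1$. Then $\max\{\alpha - 1, 0\} = 0$, while the middle term of Theorem~\ref{thm:conditionalProjectionBounds} is $(K^A_r(x) - r_n)/2 \approx (r - r_n)/2$. For this to be $O(\ve r)$ you need $r \leq (1 + O(\ve)) r_n$; ``$r_n$ a definite fraction of $r$'' is not enough. More generally, the absorption you claim forces $r/r_n \leq 1/\min\{\alpha, 2-\alpha\} + O(\ve)$, which collapses to essentially $1$ as $\alpha \to 1$. Meanwhile your Regime~2 needs the dead zone to have length $O(\ve r)$, i.e.\ $r/r_n \geq (1-s)/(s\ve)$. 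For $\alpha$ near $1$ there is a range of $r$ of order $r_n/\ve$ covered by neither regime.

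The paper handles the small-$r$ case with the idea you are missing: it partitions $[r_n, r]$ into roughly $M = \lfloor r/r_n\rfloor$ pieces of length $r_n$ and applies Lemma~\ref{lem:almostProjectionYellowTeal} on each one. The point is that on a piece $[a,b]$ with $b - a = r_n$, the direction hypothesis only needs $s' \leq r_n$, where $K^A_{s'}(e) \geq s' - O(\log r_n)$ holds by the definition of $\mathcal{D}_s(A)$. This reuses the randomness of $e$ on every piece and never produces the lossy ``half'' term that Theorem~\ref{thm:conditionalProjectionBounds} gives. In the large-$r$ case the paper does the opposite of what you propose: rather than a three-piece partition it observes that once $r \geq \ve^{-2} r_n/s$ one has $K^A_t(e) \geq t - \ve^2 r$ for \emph{all} $t \leq r$, and Theorem~\ref{thm:LutStu22} applies directly.

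A smaller issue: your proposed $D$-oracle fix does not do what you want. Property~(2) of Lemma~\ref{lem:finite-oracle} says $K^{A,D}_{t,r}(e\mid x) = K^A_{t,r}(e\mid x) + O(\log r)$, so the $\ve r$ slack in the direction inequality coming from condition~(3) is unchanged after adjoining $D$; lowering the complexity of $x$ does not sharpen $K(e\mid x)$.
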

\begin{proof}
    We first note that, by condition (3) and the symmetry of information, 
    \begin{align*}
        K^A_r(e\mid x) &= K^A_r(e) + K^A_{r}(x\mid e) - K^A_r(x) - O(\log r)\\
        &\geq K^A_r(e) + K^{A,e}_{r}(x) - K^A_r(x) - O(\log r)\\
        &\geq K^A_r(e) - \ve r - O(\log r),
    \end{align*}
    for all sufficiently large $r$.
    
    Let $r\in\N$ be sufficiently large. Recalling the sequence defined in \ref{sec:smallsets}, we first assume that $r_n \leq r < \frac{1}{\ve^2}\lfloor r_n / s \rfloor$. Let $M = \lfloor r/r_n\rfloor$, and define the sequence $r_i = ir_n$ for $i = 1, \ldots, M$, and $r_{M+1} = r$. Note that $[r_i, r_{i+1}]$ is both $(\alpha, \ve)$-almost teal and $(\alpha, \ve)$-almost yellow. 
    
    Using symmetry of information, and Lemma \ref{lem:almostProjectionYellowTeal}, with $\sigma = \min\{\alpha, 1\}$ we conclude that 
    \begin{align*}
        K^{A, B, e}_r(x \mid p_e x) &\leq \sum\limits_{i=1}^{M} K^{A, e}_{r_{i+1}, r_{i+1}, r_i}(x \mid p_e x, e) + O(\log r_i)\\
        &\leq K^A_r(x) - \sigma r+ 4M\ve r+ CM\log r,
    \end{align*}
    for some fixed constant $C$.

    Using the symmetry of information to rearrange the above inequality and using condition (3) of the present proposition, we have
    \begin{align*}
        K^{A,B,e}_r(p_e x) &\geq K^{A, B, e}_r(x) - \left(K^A_r(x) - \sigma r+ 4M\ve r+ O(log r)\right)\\
        &\geq \sigma r - (4M+1)\ve r - O(\log r)\\
        &= \min\{\alpha, 1\} - (4M+1)\ve r - O(\log r).
    \end{align*}
    By the definition of $M$ and our assumption on $r$, we conclude that
    \begin{equation}
         K^{A,B,e}_r(p_e x) \geq \min\{\alpha, 1\} - 5\ve r - O(\log r).
    \end{equation}

    For the second case, assume that $r \geq \frac{1}{\ve^2}\lfloor r_n / s \rfloor$. Let $t \leq r$. If $t \leq r_n$, then we immediately have $K^A_t(e) \geq t - O(\log r)$. If $r_n < t \leq r_n/s$, then 
        \begin{align*}
            K^A_t(e) &\geq r_n - O(\log r_n)\\
            &> t - \ve^2 r - O(\log r).
        \end{align*}
        Finally, if $r_n/s < t \leq r$,
        \begin{align*}
            K^A_t(e) &\geq K^A_{t, r_n/s}(e\mid e) + K^A_{r_n}(e) - O(\log t)\\
            &= t - \frac{r_n}{s} + r_n - O(\log t)\\
            &= t - \ve^2 r - O(\log t).
        \end{align*}
        Therefore, we see that, for all $t\leq r$, 
        \begin{equation}
            K^A_t(e) \geq t - \ve^2 r - O(\log r)
        \end{equation}
        It follows from Theorem \ref{thm:LutStu22} that 
        \begin{center}
            $K^{A,B,e}_r(p_e x) \geq \min\{r, K^A_r(x)\} - 25\ve r - O(\log r)$.
        \end{center}
         
        We have now established this bound for all $r \in [r_n, r_{n+1})$; the conclusion follows from taking the limit inferior.
\end{proof}

We now use the previous, pointwise, result and the point-to-set principle to establish our main theorem for $\mathcal{C}_{WR}$, the class of weakly regular sets.
\begin{thm}\label{thm:CountablyUniversalWeaklyRegular}
    For every $\ve > 0$, there is a set of directions of lower box counting dimension $\ve$ which is universal for $\mathcal{C}_{WR}$. In particular, for every $0 < s < 1$, the set $\mathcal{D}_s$ is universal for $\mathcal{C}_{WR}$.
\end{thm}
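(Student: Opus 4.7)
The plan is to derive the theorem directly from Proposition~\ref{prop:effThmWeaklyRegular}, Lemma~\ref{lem:propertiesOfDs}, and the point-to-set principle. The crucial observation that makes the argument work is that hypothesis (2) of Proposition~\ref{prop:effThmWeaklyRegular} requires only $e \in \mathcal{D}_s(A)$, without reference to the auxiliary oracle $B$ appearing in hypothesis (3) and the conclusion. Consequently a single direction $e \in \mathcal{D}_s$ can be chosen once and for all for a given weakly regular set $E$, and used uniformly against every oracle $B$ when applying the point-to-set principle to $p_e E$.

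Fix $E \in \mathcal{C}_{WR}$ and let $\alpha = \dim_H(E) = \dim_P(E)$. First I would select an oracle $A$ that is simultaneously an optimal Hausdorff oracle and a packing oracle for $E$. Such an $A$ exists because weakly regular sets have optimal oracles and the join of an optimal Hausdorff oracle with any packing oracle is still optimal (join of optimal with arbitrary) and also a packing oracle (join of arbitrary with packing). Next, applying Lemma~\ref{lem:propertiesOfDs} to the singleton family $\{A\}$, I would choose a direction $e \in \mathcal{D}_s$ such that $e \in \mathcal{D}_s(A)$. I claim this $e$ witnesses universality for $E$.

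Since Lipschitz projections do not increase Hausdorff dimension, $\dim_H(p_e E) \leq \min\{\dim_H(E), 1\}$ is immediate. For the reverse inequality, the point-to-set principle reduces the task to showing that for every oracle $B$ and every $\eta > 0$ there exists $x \in E$ with $\dim^B(p_e x) \geq \min\{\alpha, 1\} - \eta$. Fix such a $B$ and $\eta$, and set $\ve = \eta/25$. Since $A$ is optimal Hausdorff, the definition applied with additional oracle $e \oplus B$ furnishes an $x \in E$ satisfying $\dim^{A,B,e}(x) > \alpha - \ve$ and $K^{A,B,e}_r(x) \geq K^A_r(x) - \ve r$ for all sufficiently large $r$, which is precisely hypothesis (3) of Proposition~\ref{prop:effThmWeaklyRegular}. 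Since $A$ is a packing oracle, $\Dim^A(x) \leq \alpha$, and $\dim^A(x) \geq \dim^{A,B,e}(x) > \alpha - \ve$, so hypothesis (1) also holds; hypothesis (2) was arranged above. The proposition yields $\dim^{A,B,e}(p_e x) \geq \min\{\alpha,1\} - 25\ve = \min\{\alpha,1\} - \eta$, and since extra oracle information only lowers complexity, $\dim^B(p_e x) \geq \dim^{A,B,e}(p_e x)$, as required. The ``in particular'' clause follows by taking $s = \ve$, since Lemma~\ref{lem:propertiesOfDs} gives $\mathcal{D}_s$ lower box dimension $s$.

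I expect no major obstacle: the two hard results (the pointwise projection bound and the construction of a small $\mathcal{D}_s$ with the correct complexity profile relative to countably many oracles) are already packaged into Proposition~\ref{prop:effThmWeaklyRegular} and Lemma~\ref{lem:propertiesOfDs}. The only subtle point is the decoupling of $e$ from $B$: because $A$ is determined by $E$ and does not depend on $B$, we can freeze $e$ up front using only $A$, and then let $x \in E$ depend on $B$ through the optimality of $A$. This is precisely why the pointwise proposition was formulated with condition~(2) depending on $A$ alone.
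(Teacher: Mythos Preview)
Your proposal is correct and follows essentially the same approach as the paper: both fix an optimal-and-packing oracle $A$ for $E$, select $e \in \mathcal{D}_s(A)$ via Lemma~\ref{lem:propertiesOfDs}, and then, for an arbitrary (or Hausdorff) oracle $B$ for $p_e E$, use optimality of $A$ against $(B,e)$ to produce an $x \in E$ meeting the hypotheses of Proposition~\ref{prop:effThmWeaklyRegular}. The only cosmetic difference is that the paper fixes $B$ as a Hausdorff oracle for $p_e E$ at the outset, whereas you quantify over all $B$ and invoke the monotonicity $\dim^B \geq \dim^{A,B,e}$; also note that in your final sentence it is the \emph{first} clause of the theorem (existence of a dimension-$\ve$ universal set) that follows from the ``in particular'' clause by setting $s = \ve$, not the other way around.
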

\begin{proof}
    Fix an $s \in (0,1)$. Let $E$ be a weakly regular subset of $\R^2$. Let $\alpha = \dim_H(E) = \dim_P(E)$. Note that, since $E$ is weakly regular, it necessarily has an optimal oracle $A$. Without loss of generality, we assume that $A$ is also a packing oracle for $E$. Using Lemma \ref{lem:propertiesOfDs}, choose $e \in D_s$ to be a direction such that $e \in \mathcal{D}_s(A)$. 

    Let $B$ be a Hausdorff oracle for $p_e E$. To complete the proof, it suffices to show that, for every $\ve > 0$, there is a point $x\in E$ such that
    \begin{equation}
        \dim^{A, B, e}(p_e x) \geq \min\{\dim_H(E), 1\} - \ve.
    \end{equation}

    Fix $\ve > 0$. Since $A$ is optimal for $E$, there is a point $x\in E$ such that $\dim^{A, B, e}(x) > \dim_H(E) - \frac{\ve}{100}r$ and
    \begin{equation*}
        K^{A, B, e}_r(x)\geq K^{A}_r(x) -\frac{\ve}{100}r
    \end{equation*}
    for all sufficiently large $r\in\N$. We also note that, since $A$ is a packing oracle for $E$, $\Dim^{A}(x) \leq \dim_P(E)$. By our choice of $e$ and $x$, we see that the conditions of Proposition \ref{prop:effThmWeaklyRegular} hold, and so we have
    \begin{center}
        $\dim^{A, B, e}(p_e x) \geq \min\{\dim_H(E), 1\} - \ve$,
    \end{center}
    completing the proof.
\end{proof}

\subsection{AD-regular sets}\label{ssec:ADRegular}
We now establish the existence of dimension zero universal sets for the class of AD-regular sets. We begin by recalling the relevant definitions.

Let $E \subseteq \R^n$ be a closed set, $C\geq 1$ and $\alpha \geq 0$. We say that $E$ is $(\alpha, C)$-AD-regular if
\begin{equation}
    C^{-1}r^\alpha \leq \mathcal{H}^\alpha(E \cap B(x, r))\leq Cr^\alpha, \;\;\; x \in A, 0 < r < \text{diam}(A)
\end{equation}
We say that $E$ is $\alpha$-AD-regular if $E$ is $(\alpha, C)$-AD-regular, for some constant $C \geq 1$.

As in \cite{FieStu24}, we can define a point-wise analog of AD-regular sets. A point $x\in \R^n$ is $(\alpha, C)$-AD-regular with respect to an oracle $A\subseteq \N$ if
\begin{equation}
     \alpha r - C\log r \leq K_r^A(x)\leq  \alpha r + C\log r 
\end{equation}
for every $r\in \N$.

Let $E\subseteq \R^n$ be $\alpha$-AD-regular. We say that an oracle $A \subseteq\N$ is an AD-regular oracle for $E$ if 
\begin{equation}
     \alpha r - O(\log r) \leq K_r^A(x)\leq  \alpha r + O(\log r)  \; \forall r \in \N
\end{equation}
for $\mathcal{H}^\alpha$-a.e. $x\in E$. Note that we immediately have that any AD-regular oracle for $E$ is a Hausdorff oracle for $E$. The next lemma, from \cite{FieStu24}, gives the existence of AD-regular oracles for AD-regular sets, and can thus be interpreted as a sort of point-to-set principle for AD-regularity.\footnote{This lemma was originally stated for compact AD-regular sets, which immediately entails it for closed AD-regular sets. Here, we adopt what seems to be the more common convention that AD-regular sets are assumed to be closed by definition.} 

\begin{lem}\label{lem:ADRegularOraclesExist}
Let $E\subseteq\mathbb{R}^n$ be an $\alpha$-AD-regular set. Then there is an AD-regular oracle $A$ for $E$. Moreover, for any oracle $B\subseteq\mathbb{N}$, the join $(A,B)$ of $A$ and $B$ is also an AD-regular oracle for $E$.
\end{lem}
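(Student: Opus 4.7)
The plan is to build $A$ so that each point of $E$ admits a short description at every scale (yielding the upper bound on $K_r^A$), and to then use a counting/Borel--Cantelli argument to rule out points whose complexity is too low (yielding the matching lower bound). The moreover clause will come essentially for free, because the lower bound argument is purely combinatorial and is therefore uniform in the choice of oracle.

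By $\sigma$-finiteness of $\mathcal{H}^\alpha$ on the AD-regular set $E$, I may reduce to $E \cap B(0, R_0)$ and then assemble the resulting oracles for countably many such bounded pieces by taking a join. I let $A$ be a standard computable name of the closed set $E$, so that relative to $A$ a Turing machine can produce, for each $r \in \N$, a finite list of rational points $\{q_1, \ldots, q_{N_r}\}$ intersecting every dyadic $2^{-r}$-cube that meets $E$. The upper AD-regularity bound, together with a Vitali-type packing argument, forces $N_r \leq O(2^{\alpha r})$; since every $x \in E$ lies within $O(2^{-r})$ of some $q_j$, naming the index $j$ costs $\alpha r + O(\log r)$ bits. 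This gives $K_r^A(x) \leq \alpha r + O(\log r)$ for every $x \in E$ and every $r \in \N$. Because $K_r^{A, B}(x) \leq K_r^A(x) + O(\log r)$ for any oracle $B$, the same upper bound transfers to $(A, B)$ without any further work.

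For the lower bound, fix an arbitrary oracle $D$ (to be taken later as $A$ or as $(A, B)$). For each pair $(r, s) \in \N^2$, the set $F_{r, s} := \{x \in \R^n : K_r^D(x) \leq \alpha r - s\}$ is covered by at most $2^{\alpha r - s}$ balls of radius $2^{-r}$, since only that many $D$-programs of length at most $\alpha r - s$ can produce a $2^{-r}$-approximation to any given point. Each such ball either misses $E$ entirely or is contained in a ball of radius $O(2^{-r})$ centered at a point of $E$, so the upper AD-regularity bound contributes at most $O(2^{-\alpha r})$ of $\mathcal{H}^\alpha$-mass per ball; summing, $\mathcal{H}^\alpha(E \cap F_{r, s}) \leq O(2^{-s})$. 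Taking $s = s(r) = 2\log r$ makes $\sum_r \mathcal{H}^\alpha(E \cap F_{r, s(r)})$ finite, so by Borel--Cantelli, $\mathcal{H}^\alpha$-almost every $x \in E$ belongs to $F_{r, 2\log r}$ for only finitely many $r$. Absorbing the finitely many exceptional scales into a constant $C_x$ then yields $K_r^D(x) \geq \alpha r - C_x \log r$ for every $r \in \N$. Applying this with $D = A$ proves the first assertion; applying it with $D = (A, B)$ proves the moreover clause.

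The only delicate point I anticipate is the effective encoding of $E$ into the oracle $A$: one needs a Turing-machine-readable description of the closed set $E$ from which the enumeration of $2^{-r}$-cubes meeting $E$ is uniformly computable. This is standard in computable analysis (via a closed-set name, or equivalently a recursive enumeration of rationals with appropriate density properties), and once this is in place every remaining step is either measure-theoretic (Vitali, Borel--Cantelli) or a purely counting estimate that commutes with adjoining any auxiliary oracle $B$.
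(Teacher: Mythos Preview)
The paper does not prove this lemma; it is quoted from \cite{FieStu24} (the footnote notes that the original statement was for compact AD-regular sets, which immediately yields the closed case). Your argument is the standard one and is correct: encode into $A$ enough information to list, at each scale $r$, a $2^{-r}$-net of (a bounded piece of) $E$ of cardinality $O(2^{\alpha r})$, giving the uniform upper bound $K_r^A(x)\le \alpha r+O(\log r)$; and obtain the lower bound for an arbitrary oracle $D$ by counting short programs, bounding the $\mathcal{H}^\alpha$-mass of $E\cap F_{r,s}$ via the upper Ahlfors inequality, and applying Borel--Cantelli.

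Two small imprecisions are worth flagging, though neither is a genuine gap. First, the bound $N_r=O(2^{\alpha r})$ on the number of $2^{-r}$-cubes meeting a bounded piece of $E$ uses \emph{both} inequalities in the AD-regularity hypothesis, not just the upper one: lower regularity gives each net point a definite amount of $\mathcal{H}^\alpha$-mass, and upper regularity makes the total mass of a bounded piece finite. Second, a ``closed-set name'' in the sense of computable analysis (an enumeration of basic balls in the complement) does not by itself let a machine list the cubes that \emph{do} meet $E$; what you actually want is effective compactness of the bounded pieces, or simply to hard-code the nets into $A$ directly. You essentially concede this in your final paragraph, and either fix works.
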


As before, we will establish our main result by first proving its point-wise analog.
\begin{prop}\label{prop:effThmADRegular}
Let $A\subseteq \N$, $x\in \R^2$, $e \in \mathcal{S}^1$, $\alpha \in (0, 2]$ and $C \in \N$. Suppose that $x$ is $(\alpha, C)$-AD-regular relative to $A$, and $K^{A, e}_r(x) \geq K^A_r(x) - C\log r$ for every $r\in\N$ and $e\in\mathcal{D}_0(A)$. Then
\begin{center}
    $\dim^{A,e}(p_e x) \geq \min\{\alpha, 1\}$.
\end{center}
\end{prop}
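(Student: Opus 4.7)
The plan is to lower bound $K^{A,e}_r(p_e x)$ for every sufficiently large $r$ and then take the liminf. Fix a rational $\sigma < \min\{\alpha,1\}$ and small $\ve > 0$; for each large $r$, let $n$ be the unique index with $r \in [r_n, r_{n+1})$. The argument splits into a \emph{short} range $r \in [r_n, nr_n/\ve]$ --- which contains the bad interval $(r_n, nr_n]$ in the definition of $\mathcal{D}_0$, where the bits of $e$ are zeroed out --- and a \emph{long} range $r \in [nr_n/\ve, r_{n+1}]$.

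In the long range I apply Theorem \ref{thm:LutStu22} directly with parameter $s = \sigma$. Splitting $t\leq r$ into $t \leq r_n$, $r_n < t \leq nr_n$, and $nr_n < t \leq r$, the defining properties of $\mathcal{D}_0(A)$ (together with the monotonicity bound $K^A_t(e) \geq K^A_{r_n}(e)$ on the middle sub-interval) each yield $K^A_t(e) \geq \sigma t - \ve r$; the key point is that the condition $r \geq nr_n/\ve$ absorbs the loss $(n-1)r_n$ into $\ve r$. Combined with the hypothesis $K^{A,e}_r(x) \geq K^A_r(x) - C\log r$, the theorem produces $K^{A,e}_r(p_e x) \geq \sigma r - 10\sqrt{\ve}\, r - O(\log r)$.

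In the short range I first anchor at $r_n$: Theorem \ref{thm:LutStu22} applied at precision $r_n$ with parameter $s = 1$ and $\ve_n = O(\log r_n/r_n)$ gives $K^{A,e}_{r_n}(p_e x) \geq \min\{\alpha,1\} r_n - o(r_n)$. Then I partition $[r_n, r]$ into $M \leq n/\ve$ sub-intervals $[t_j,t_{j+1}]$ of length $r_n$; by AD-regularity each such interval is $(\sigma, O(1))$-yellow. To verify the direction hypothesis of Lemma \ref{lem:projectionYellowTeal}, I combine the two decompositions of $K^A_{s,b}(e,x)$ furnished by symmetry of information,
\[ K^A_{s, b}(e, x) \approx K^A_b(x) + K^A_{s, b}(e \mid x) \approx K^A_s(e) + K^A_{b, s}(x \mid e), \]
with the bound $K^A_{b, s}(x \mid e) \geq K^{A,e}_b(x) - O(\log b) \geq K^A_b(x) - O(\log b)$ that follows from the hypothesis, to obtain $K^A_{s, b}(e \mid x) \geq K^A_s(e) - O(\log b) \geq \sigma s - O(\log b)$ for every $s \leq r_n$. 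Applying the yellow case of Lemma \ref{lem:projectionYellowTeal} on each sub-interval, telescoping $\sum_j K^A_{t_{j+1}, t_j}(x \mid x) = K^A_r(x) - K^A_{r_n}(x) + O(M\log r)$, and reorganizing via symmetry of information for $x$ yields the identity
\[ K^{A,e}_r(p_e x) \geq K^{A,e}_{r_n}(p_e x) + \sigma(r - r_n) - O\!\left(M(\log r)^2\right), \]
which combined with the anchor gives $K^{A,e}_r(p_e x) \geq \sigma r - o(r)$.

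Taking the liminf over both ranges gives $\dim^{A,e}(p_e x) \geq \sigma - 10\sqrt{\ve}$ for every $\ve > 0$, hence $\geq \sigma$; letting $\sigma \nearrow \min\{\alpha, 1\}$ through rationals finishes the argument. The main technical obstacle is controlling the short-range partitioning error $O(M(\log r)^2)$: since $M \leq n/\ve$ and $\log r = O(\log r_n)$ throughout, we need $n(\log r_n)^2/r_n \to 0$, which relies essentially on the doubly exponential growth $r_{n+1} = 2^{2^{r_n}}$.
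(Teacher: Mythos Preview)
Your proposal is correct and follows essentially the same two-case strategy as the paper: a short range handled by partitioning into length-$r_n$ intervals and applying the yellow case of Lemma~\ref{lem:projectionYellowTeal}, and a long range handled directly by Theorem~\ref{thm:LutStu22}. The differences are cosmetic. The paper places the cutoff at $(nr_n)^2$ rather than $nr_n/\ve$, which avoids carrying the auxiliary $\ve$ through the short range; and in the long range the paper applies Theorem~\ref{thm:LutStu22} with $s=1$ (showing $K^A_t(e)\geq t-\tfrac{\ve}{100}r$ for all $t\leq r$) to obtain $\min\{\alpha,1\}$ directly, whereas you use $s=\sigma$ and take a final limit $\sigma\nearrow\min\{\alpha,1\}$. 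Your choice to work with a rational $\sigma<\min\{\alpha,1\}$ is actually slightly more careful, since Lemma~\ref{lem:projectionYellowTeal} formally requires $\sigma\in\mathbb{Q}$; the paper suppresses this point. Your explicit anchor at $r_n$ via Theorem~\ref{thm:LutStu22} is also fine---the paper instead (implicitly) absorbs the initial interval into the partition---and your verification of the direction hypothesis $K^A_{s,b}(e\mid x)\geq\sigma s - O(\log b)$ via symmetry of information is exactly the step the paper spells out in the proof of Proposition~\ref{prop:effThmBourgain} but elides here.
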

\begin{proof}
    Let $A, x, e, \alpha$ and $C$ satisfy the conditions. Let $\ve > 0$ and let $r\in\N$ be sufficiently large. Let $n\in\N$ such that $r_n \leq r < r_{n+1}$. There are two cases to consider. For the first, assume that $r \leq (nr_n)^2$. Let $M = \lfloor r / r_n \rfloor$, and define the sequence $r_i = i r_n$ for every $i = 1,\ldots M$ and $r_{M+1} = r$. Since $x$ is $(\alpha, C)$-AD-regular relative to $A$ we have that $[r_i, r_{i+1}]$ is $(\alpha, C)$-yellow. Hence, we may apply Lemma \ref{lem:projectionYellowTeal}, which implies
    \begin{align*}
        K^{A, e}_r(x \mid p_e x) &\leq \sum\limits_{i=1}^{M+1} K^{A, e}_{r_{i+1}, r_{i+1}, r_i}(x \mid p_e x, e) + O(\log r_i)\\
        &\leq \min\{\alpha - 1, 0\} r + O(M \log r).
    \end{align*}
    By the assumption that $K^{A, e}_r(x) \geq K^A_r(x) - C\log r$ for every $r\in\N$, we have
    \begin{align*}
        \min\{\alpha - 1, 0\} r + O(\log r) &= K^{A, e}_r(x \mid p_e x)\\
        &= K^{A, e}_r(x) - K^{A, e}_r(p_e x) - O(\log r)\\
        &= \alpha r - K^{A, e}_r(p_e x) - O(M \log r).
    \end{align*}
    Rearranging, and applying the definition of $M$ we conclude that 
    \begin{equation}
        K^{A,e}_r(p_e x) \geq \min\{\alpha, 1\}r - o(r).
    \end{equation}
    
    For the second case, assume that $r \geq (nr_n)^2$. Note that, by assumption $K^A_t(e) \geq t - O(\log r)$ for all $t\leq r_n$. For every $r_n < t \leq r$, 
    \begin{align*}
        K^A_t(e) &\geq t - (n-1)r_n - O(\log r)\\
        &\geq t - \frac{\ve}{100}r - O(\log r).
    \end{align*}
    Hence, by Theorem \ref{thm:LutStu22}, we conclude that 
    \begin{center}
        $K^{A,e}_r(p_e x) \geq \min\{\alpha, 1\} r - \ve r$.
    \end{center}
    Since $\ve$ can be chosen to be arbitrarily small in the second case, the conclusion follows.
\end{proof}

We are now able to prove our main result on the existence of small universal sets of directions for AD-regular sets. 
\begin{thm}\label{thm:universalSetsADRegular}
There is a set of directions of lower box counting dimension zero which is universal for $\mathcal{C}_{AD}$. In particular, the set $\mathcal{D}_0$ is a universal set of directions for the class $\mathcal{C}_{AD}$.
\end{thm}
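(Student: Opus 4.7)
The plan is to follow the template of Theorem \ref{thm:CountablyUniversalWeaklyRegular} from the preceding subsection, with the closure property of Lemma \ref{lem:ADRegularOraclesExist} playing the role that optimality plays there. Given $E \in \mathcal{C}_{AD}$ with $\dim_H(E) = \alpha$, extract an AD-regular oracle $A$ for $E$ via Lemma \ref{lem:ADRegularOraclesExist}, and choose a direction $e \in \mathcal{D}_0$ with $e \in \mathcal{D}_0(A)$ via Lemma \ref{lem:propertiesOfD0}. Let $B$ be a Hausdorff oracle for $p_e E$. By the point-to-set principle,
\[\dim_H(p_e E) \;=\; \sup_{y\in p_e E}\dim^B(y) \;\geq\; \dim^B(p_e x) \;\geq\; \dim^{A,B,e}(p_e x)\]
for any $x\in E$, so, the matching upper bound being trivial, it suffices to produce some $x\in E$ with $\dim^{A,B,e}(p_e x) \geq \min\{\alpha,1\}$.

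The \emph{moreover} clause of Lemma \ref{lem:ADRegularOraclesExist} is crucial here: it guarantees that $A \oplus B \oplus e$ is itself an AD-regular oracle for $E$. Intersecting the full-$\mathcal{H}^\alpha$-measure sets on which $A$ and $A\oplus B\oplus e$ witness AD-regularity, we may select $x\in E$ for which both $K^A_r(x) = \alpha r + O(\log r)$ and $K^{A,B,e}_r(x) = \alpha r + O(\log r)$ hold for every $r$. This simultaneously supplies (i) the AD-regularity of $x$ relative to $A$, and (ii) the complexity hypothesis $K^{A,B,e}_r(x) \geq K^A_r(x) - C\log r$ required for the argument of Proposition \ref{prop:effThmADRegular}.

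Finally, we apply Proposition \ref{prop:effThmADRegular} while carrying the auxiliary oracle $B$ along, exactly as $B$ is carried along in the proof of Theorem \ref{thm:CountablyUniversalWeaklyRegular}; inspection of the two cases shows $B$ enters only monotonically, through $K^{A,B,e}_r(x\mid p_e x)\leq K^{A,e}_r(x\mid p_e x)$ in Case 1 and through the built-in second-oracle slot of Theorem \ref{thm:LutStu22} in Case 2, with condition (ii) driving the final symmetry-of-information step. This yields $\dim^{A,B,e}(p_e x)\geq \min\{\alpha,1\}$, and the chain of inequalities above then gives $\dim_H(p_e E)\geq \min\{\alpha,1\}$. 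The bound on the lower box dimension of $\mathcal{D}_0$ is the first half of Lemma \ref{lem:propertiesOfD0}.

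The main conceptual obstacle is the circularity lurking in the choice of $B$: since $B$ depends on $e$, one cannot demand $e\in\mathcal{D}_0(A\oplus B)$ outright. The hypotheses of Proposition \ref{prop:effThmADRegular} sidestep this by asking only that $e\in\mathcal{D}_0(A)$, allowing $B$ to enter \emph{separately} through the complexity condition on $x$, which the closure of AD-regular oracles under joining makes easy to verify without any control over $K^{A\oplus B}_r(e)$.
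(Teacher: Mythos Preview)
Your proof is correct and follows the overall shape of the paper's argument, but it handles the Hausdorff oracle for $p_e E$ differently. The paper arranges at the outset that $E$ is computably compact relative to $A$; it then uses the standard fact that the projection of a computably compact set is computably compact relative to $(A,e)$, so $(A,e)$ is \emph{already} a Hausdorff oracle for $p_e E$ and no auxiliary $B$ is needed. This lets the paper apply Proposition \ref{prop:effThmADRegular} exactly as stated, without threading an extra oracle through the two cases.

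Your route instead takes an arbitrary Hausdorff oracle $B$ for $p_e E$ and absorbs it via the closure clause of Lemma \ref{lem:ADRegularOraclesExist}, then observes that the proof of Proposition \ref{prop:effThmADRegular} tolerates $B$ monotonically. This is legitimate, and your remark about the circularity (that $B$ depends on $e$, so one cannot demand $e\in\mathcal{D}_0(A\oplus B)$) correctly identifies why the hypotheses of the proposition are phrased as they are. The trade-off: the paper's computable-compactness move is cleaner and uses the proposition verbatim, while your approach avoids invoking computable compactness at all and mirrors the weakly-regular template more closely, at the cost of re-examining the proposition's proof.
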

\begin{proof}
    Let $E$ be an AD-regular subset of $\R^2$. Let $A$ be an AD-regular oracle for $E$, guaranteed by Lemma \ref{lem:ADRegularOraclesExist}. We may assume that $E$ is computably compact relative to $A$. Using Lemma \ref{lem:propertiesOfD0}, choose $e \in \mathcal{D}_0$ to be a direction such that $e \in \mathcal{D}_0(A)$. 

    Since each $E$ is computably compact relative to $A$, the oracle $(A, e)$ is a Hausdorff oracle for $p_e E$. To complete the proof, it suffices to show that, for every $\ve > 0$, there is a point $x\in E$ such that
    \begin{equation}
        \dim^{A, e}(p_e x) \geq \min\{\dim_H(E), 1\} - \ve.
    \end{equation}

    To this end, fix $\ve > 0$. Since $A$ is an AD-regular oracle for $E$, there is a point $x\in E$ such that $\dim^{A, e}(x) = \dim_H(E)$ and
    \begin{equation*}
        K^{A, e}_r(x)\geq K^{A}_r(x) -O(\log r),
    \end{equation*}
    for all sufficiently large $r\in\N$. By our choice of $e$ and $x$, it is easy to verify that the conditions of Proposition \ref{prop:effThmADRegular} hold, and so we have
    \begin{center}
        $\dim^{A, e}(p_e x) \geq \min\{\dim_H(E), 1\} - \ve$,
    \end{center}
    completing the proof.
\end{proof}

\begin{remark}
    A nearly identical argument allows us to establish Proposition \ref{prop:packingRegular}. We omit the details but outline the modifications. It is straightforward to modestly generalize the constructions in Section 4 by defining 
    \begin{center}
    $\ML(s) := \{\theta \in (0, \pi) \mid (\exists c) \; s r + c\log r \geq K_r(\theta) \geq s r - c\log r \text{ for every } r\in\N\}$
\end{center}
and considering the corresponding sets $\mathcal{D}_{\ve, s}(A)$. Applying the point to set principle, these sets have packing dimension $s$. If $E$ is (weakly or AD) regular and has dimension no more than $s<1$, then the maximum complexity growth rate of $x\in E$ on any interval is essentially $s$, so we can partition $[1, r]$ into many $s$-teal intervals. Since we want to use $s$-teal intervals and not $1$-teal intervals, Lemmas \ref{lem:projectionYellowTeal} and \ref{lem:alternateProjectionYellowTeal} only require that the directions have complexity at least $s r$. Hence, we can choose $e\in \mathcal{D}_{\ve, s}(A)$ and still obtain the optimal bounds after partitioning. 
\end{remark}

\section{Bourgain universal sets}
In this section, we prove that there are Bourgain universal sets of lower box counting dimension zero, for the class of analytic sets. Recall that $\mathcal{D}\subseteq\mathcal{S}^1$ is Bourgain universal for the class of analytic sets if, for every analytic $E\subseteq\R^2$, there is a direction $e\in \mathcal{D}$ such that
\begin{center}
    $\dim_H(p_e E) \geq \frac{\dim_H(E)}{2}$.
\end{center}

We begin with the point-wise analog of this theorem. 
\begin{prop}\label{prop:effThmBourgain}
    Let $A,B\subseteq\N$, $x\in\R^2$, $e\in\mathcal{S}^1$, and $\ve > 0$. Suppose that $\ve$ is sufficiently small, and the following conditions hold.
    \begin{enumerate}
        \item $e \in \mathcal{D}_0(A)$.
        \item $K^{A,B, e}_r(x) \geq K^A_r(x) - c\log r$ for some fixed constant $c > 0$ and all sufficiently large $r\in\N$.
    \end{enumerate}
    Then,
    \begin{equation}
        \dim^{A,B,e}(p_e x) \geq \frac{\dim^A(x)}{2}.
    \end{equation}
\end{prop}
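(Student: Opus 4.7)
The plan is to adapt the two-regime strategy of Propositions \ref{prop:effThmADRegular} and \ref{prop:effThmWeaklyRegular} to the weaker Bourgain target $\alpha/2 := \dim^A(x)/2$, exploiting the fact that $\min\{\alpha, 1\} \geq \alpha/2$ for every $\alpha \leq 2$. Before entering the dichotomy, the first step is to convert hypothesis (2) into a conditional-complexity statement for $e$ given $x$: using the symmetry of information at mixed precisions together with the fact that adding or removing the oracle $B$ changes any Kolmogorov complexity by at most an additive $O(\log r)$, one deduces
\begin{equation*}
K^A_{s,r}(e \mid x) \geq K^A_s(e) - O(\log r) \qquad \text{for every } s \leq r.
\end{equation*}
This lets us import the (unconditional) complexity profile guaranteed by $e \in \mathcal{D}_0(A)$ into the conditional form required by Theorem \ref{thm:conditionalProjectionBounds}.

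Now fix a small auxiliary $\ve > 0$, take $r$ sufficiently large, and let $n$ be the unique index with $r_n \leq r < r_{n+1}$. Direct inspection of the definition of $\mathcal{D}_0(A)$ gives $K^A_s(e) \geq s - (n-1)r_n - O(\log r)$ for every $s \leq r$, so I would split on whether $r \geq (n-1)r_n/\ve$ or not. In the first (``full-complexity'') regime the bound becomes $K^A_s(e) \geq s - \ve r - O(\log r)$ for all $s \leq r$, and Theorem \ref{thm:LutStu22} with $s = 1$ then yields $K^{A,B,e}_r(p_e x) \geq \min\{\alpha,1\} r - O(\sqrt{\ve})\, r \geq \tfrac{\alpha}{2}r - O(\sqrt{\ve})\, r$. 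In the second regime ($r < (n-1)r_n/\ve$) I would invoke Theorem \ref{thm:conditionalProjectionBounds} with $t = r_n$ and $C = r/r_n \leq (n-1)/\ve$; its ``$\ve$'' parameter can be taken as $O(\log r)/r$ by the preliminary step, so the error $10 C \ve r$ is $O(r \log r / r_n)$, which is $o(r)$ because $r \leq (n-1)r_n/\ve$ forces $\log r = O(\log r_n)$. Combining the resulting upper bound on $K^A_r(x\mid p_e x, e)$ with the symmetry-of-information expansion $K^{A,B,e}_r(p_e x) = K^{A,B,e}_r(x) - K^{A,B,e}_r(x\mid p_e x) + O(\log r)$ and hypothesis (2) gives $K^{A,B,e}_r(p_e x) \geq K^A_r(x)/2 - o(r)$ in this case: each of the three branches of the $\max$ in Theorem \ref{thm:conditionalProjectionBounds} collapses to at least $K^A_r(x)/2$ after rearrangement, using the Case--Lutz bound $K^A_r(x) \leq 2r + O(\log r)$ to handle the first branch.

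To finish, I would use that $\alpha = \liminf_r K^A_r(x)/r$ to conclude $K^A_r(x) \geq (\alpha - \ve) r$ for all sufficiently large $r$; both regimes then yield $K^{A,B,e}_r(p_e x)/r \geq \alpha/2 - O(\sqrt{\ve})$ for every such $r$, and taking liminf followed by $\ve \to 0$ gives $\dim^{A,B,e}(p_e x) \geq \alpha/2 = \dim^A(x)/2$. The main obstacle is calibrating the case split, since the constant $C = r/r_n$ in the conditional-projection error is not absolutely bounded; the threshold $(n-1)r_n/\ve$ is chosen precisely so as to simultaneously control $C$ in the conditional regime and ensure $e$ has nearly full complexity at every precision in the unconditional regime. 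The super-exponential growth of the sequence $(r_n)$ is what makes both ends of this tradeoff compatible, guaranteeing $\log r = O(\log r_n)$ throughout the conditional regime while $(n-1)r_n \leq \ve r$ throughout the unconditional regime; a secondary subtlety is that $\alpha$ being a liminf (rather than a limit) forces us to work with the weaker pointwise bound $K^A_r(x) \geq (\alpha - \ve) r$ and then take liminf at the very end.
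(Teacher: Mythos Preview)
Your argument is correct and takes a genuinely different route from the paper's. Both proofs begin with the same preliminary step, deducing $K^A_{s,r}(e\mid x)\geq K^A_s(e)-O(\log r)$ from hypothesis (2). (Your justification here is slightly misphrased: removing an oracle can increase complexity arbitrarily. What is actually used is that \emph{adding} $B$ can only decrease complexity, so $K^{A,e}_r(x)\geq K^{A,B,e}_r(x)\geq K^A_r(x)-c\log r$; the conclusion you state is correct.) After that the two proofs diverge. The paper splits at the threshold $r=4n^2r_n^2$ and, in the small-$r$ regime, explicitly partitions $[\log r,r]$ into $O(r/r_n)$ yellow/teal intervals and invokes the refined Lemma~\ref{lem:projectionYellowTeal} on each, while in the large-$r$ regime it uses Lemma~\ref{lem:alternateProjectionYellowTeal} on a single teal/yellow pair; this yields a uniform sublinear error $O(\sqrt{r}(\log r)^3)$ with no auxiliary parameter. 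You instead split at $r=(n-1)r_n/\ve$ and appeal only to the black boxes Theorem~\ref{thm:LutStu22} and Theorem~\ref{thm:conditionalProjectionBounds}, incurring an $O(\sqrt{\ve})\,r$ loss in the full-complexity regime that you remove at the very end by sending $\ve\to 0$. Your route is shorter and avoids the new yellow/teal machinery of Section~\ref{sec:effproj} entirely; the paper's route gives a quantitatively sharper bound at every finite scale and is what motivates Lemmas~\ref{lem:projectionYellowTeal}--\ref{lem:almostProjectionYellowTeal} in the first place.
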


\begin{proof}
Let $r$ be sufficiently large. To use Lemmas \ref{lem:projectionYellowTeal} and \ref{lem:alternateProjectionYellowTeal}, we need to ensure that, for all $b\in[1, r]$, $K_{s, b}^A(e\mid x)$ is large. That is, we must show that the information in $x$ does not help in computing $e$, whereas condition two indicates that the information in $e$ is not useful in computing $x$. Using this condition and the symmetry of information, 
\begin{align*}
    K_{s, r}^A(e\mid x)
    &=K^A_{s}(e) + K^{A}_{r,s}(x\mid e) - K^A_{r}(x) - O(\log r)\\
    &\geq K^A_{s}(e) + K^{A,e}_{r}(x) - K^A_{r}(x) - O(\log r)\\
    &\geq K^A_s(e) - O(\log r).
\end{align*}
 Since 
\begin{equation*}
    K_{s, b}^A(e\mid x)\geq K_{s, r}^A(e\mid x) - O(\log r),
\end{equation*}
we see that
\begin{equation}\label{eq:xWontHelpe}
    K_{s, b}^A(e\mid x)\geq K^A_s(e) - O(\log r).
\end{equation}
Recalling the sequence of precisions defined in Section 4, let $n$ be such that $r_n < r\leq r_{n+1}$. There are two cases to consider. 

First, assume $r\in[r_n, 4 n^2 r_n^2]$. By the definition of $D_0(A)$, for all $s<r_n$, 
$K^A_s(e)\geq s - O(\log r_n)$. Applying \eqref{eq:xWontHelpe}, for some $k$ independent of $r$
\begin{equation*}
    K^A_{s, r_n}(e\mid x)\geq s - k\log r_n.
\end{equation*}
We may assume $r$ is large enough that $\log r$ is sufficiently large (as in Lemma \ref{lem:projectionYellowTeal}). We may thus apply Lemma \ref{lem:projectionYellowTeal} on all $(1, k)$-yellow and $(1, k)$-teal intervals $[a, b]$ such that $\log r<a$ and $b-a<r_n$. By Lemma 17 in \cite{Stull22c}, we can partition $[\log r, r]$ into $M\leq 3\lceil \frac{r}{r_n}\rceil$ such intervals; label this sequence $\log r = a_0, a_1, ... a_M=r$ and write $i\in Y$ if $[a_{i-1}, a_i]$ is $(1, k)$-yellow. Let $L$ denote the total length of the yellow intervals. Then
\begin{align*}
    K^A_r(x\mid e, p_e x)&=K^A_{r, r, r, \log r}(x\mid e, p_e x, x) + K^A_{\log r, r, r}(x\mid e, p_e x) + O(\log r)\\
    &\leq \sum_{i=1}^M \left(K^A_{a_i, a_i, a_i, a_{i-1}}(x\mid e, p_e x, x) + O(\log r) \right) + O(\log r)\\
    &\leq \sum_{i\in Y} K^A_{a_i,a_i, a_i, a_{i-1}}(x\mid e, p_e x, x) + \sum_{i\notin Y} K^A_{a_i, a_i, a_i, a_{i-1}}(x\mid e, p_e x, x) + O(M \log r)\\ &\leq \sum_{i\in Y} K^A_{a_i, a_{i-1}}(x) -  L + O(M (\log r)^2)\\
    &\leq \sum_{i\in Y} K^A_{a_i, a_{i-1}}(x) -  L + O(\frac{r}{r_n} (\log r)^2)\\
    &\leq \sum_{i\in Y} K^A_{a_i, a_{i-1}}(x) -  L + O(\frac{n r}{\sqrt r} (\log r)^2).
\end{align*}
Since the sequence $r_n$ increases rapidly, this implies
\begin{equation}\label{eq:effectiveBourgainPartition}
K^A_r(x\mid e, p_e x)\leq \sum_{i\in Y} K^A_{a_i, a_{i-1}}(x) -  L + O(\sqrt{r} (\log r)^3).
\end{equation}

\noindent Now we consider the case that $r>  4 n^2 r_n^2$. Since $e\in D_0(A)$,
\begin{align*}
    K^A_r(e) &\geq r - (n-1) r_n - c\log r \\
    &\geq r - \frac{\sqrt{r}}{2} - c\log r. 
\end{align*}
Applying \eqref{eq:xWontHelpe}, we obtain that for all $s\leq r$, 
\begin{align*}
    K^A_{r}(e\mid x)&\geq s - \frac{\sqrt{r}}{2} - k \log r\\ 
    &\geq s - \sqrt{r}
\end{align*}
since $r$ is assumed to be sufficiently large. Applying Lemma 17 in \cite{Stull22c} (in the case $t=r$), there exists some $a_1$ such that $\log r=a_0\leq a_1\leq a_2=r$, $[a_0, a_1]$ is $(1, k)$-teal, and $[a_1, a_2]$ is $(1, k)$-yellow. Since each of these intervals necessarily has length less than $r$, in conjunction with the above inequality, we may apply Lemma \ref{lem:alternateProjectionYellowTeal} on each interval.
\begin{align*}
K_{r}^A(x\mid p_e x, e)&\leq K_{a_1, a_1, a_1, a_0}^A(x\mid p_e x, e, x) + K_{a_2, a_2, a_2, a_1}^A(x\mid p_e, e, x) + O(\log r)\\
&\leq O(\sqrt{a_1}) + K^A_{a_2, a_1}(x) - \sigma(a_2 - a_1) +O(\sqrt{r}) + O(\log r)\\
&= K^A_{a_2, a_1}(x) - (1 - 2\ve)(a_2 - a_1) +O(\sqrt{r}).
\end{align*}
Again letting $Y$ denote the set of yellow intervals and $L$ the length of these intervals, this implies
\begin{equation*}
K_{r}^A(x\mid p_e x, e)\leq  \sum_{i\in Y} K^A_{a_i, a_{i-1}}(x) - L + O(\sqrt{r} (\log r)^3),
\end{equation*}
which when combined with \eqref{eq:effectiveBourgainPartition} verifies that this bound holds for \emph{all} sufficiently large $r$. Now, observe that
\begin{align*}
    K^A_r(x\mid e, p_e x)&\geq  K^{A, B, e}_r(x\mid p_e x) - O(\log r)\\
    &=K^{A, B, e}_r(x, p_e x) -K^{A, B, e}_r(p_e x) - O(\log r)\\
    &\geq K^{A, B, e}_r(x) -K^{A, B, e}_r(p_e x) - O(\log r)\\
    &\geq K^{A}_r(x) -K^{A, B, e}_r(p_e x) - O(\log r).
\end{align*}
Hence, for all sufficiently large $r$, 
\begin{equation*}
K^{A, B, e}_r(p_e x)\geq K^{A}_r(x) - \sum_{i\in Y} K^A_{a_i, a_{i-1}}(x) +  L  - O(\sqrt{r} (\log r)^3).
\end{equation*}

For the remainder of the argument, we can freely apply symmetry of information, partitioning into at most $M$ intervals on each step. Each partition thus accumulates an error of $O(M \log r)$, but we have already seen this is dominated by $\sqrt{r} (\log r)^3$. Continuing, 

\begin{align*}
K^{A, B, e}_r(p_e x)&\geq \sum_{i\notin Y} K^A_{a_i, a_{i-1}}(x) +  L  - O(\sqrt{r} (\log r)^3)\\
&\geq \sum_{i\notin Y} \frac{K^A_{a_i, a_{i-1}}(x)}{2} +  L  - O(\sqrt{r} (\log r)^3).
\end{align*}
By \eqref{eq:caseLutz}, the complexity growth rate on each yellow interval is essentially bounded by $2$ up to a logarithmic term in $r$, hence
\begin{align*}
K^{A, B, e}_r(p_e x)&\geq \sum_{i\notin Y} \frac{K^A_{a_i, a_{i-1}}(x)}{2} + \sum_{i\in Y}\frac{K^A_{a_i, a_{i-1}}(x)}{2}  - O(\sqrt{r} (\log r)^3)\\
&\geq\frac{K^A_r(x)}{2}-  O(\sqrt{r} (\log r)^3)
\end{align*}
Taking the limit inferior on both sides completes the proof.

\end{proof}

Using the previous proposition, the point-to-set principle and standard facts from geometric measure theory, we conclude the main theorem of this section.
\begin{thm}\label{thm:BourgainUniversal}
    There is a set of directions of lower box counting dimension zero which is Bourgain universal for the class of analytic sets. In particular, the set $\mathcal{D}_0$ is Bourgain universal for the class of analytic sets. 
\end{thm}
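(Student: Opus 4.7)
The dimension claim for $\mathcal{D}_0$ is immediate from Lemma \ref{lem:propertiesOfD0}, so the content of the theorem is Bourgain universality. Fix an analytic set $E\subseteq\R^2$ with $\dim_H(E)=\alpha$. Since analytic sets admit optimal oracles, pick an optimal oracle $A$ for $E$. Apply Lemma \ref{lem:propertiesOfD0} to select $e\in\mathcal{D}_0$ with $e\in\mathcal{D}_0(A)$. Let $B$ be a Hausdorff oracle for $p_e E$. By the point-to-set principle, $\dim_H(p_e E)=\sup_{y\in p_e E}\dim^B(y)$, and since passing to a richer oracle can only decrease Kolmogorov complexity, $\dim^{A,B,e}(y)\leq\dim^{B}(y)$ for every $y$. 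It therefore suffices to exhibit points $x\in E$ whose projections have $\dim^{A,B,e}(p_e x)$ arbitrarily close to $\alpha/2$.

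Given $\varepsilon>0$, optimality of $A$ furnishes a point $x_\varepsilon\in E$ with $\dim^{A,B,e}(x_\varepsilon)>\alpha-\varepsilon$ and $K^{A,B,e}_r(x_\varepsilon)\geq K^A_r(x_\varepsilon)-\varepsilon r$ for all sufficiently large $r$. Applying Proposition \ref{prop:effThmBourgain}, adapted so that condition (2) is replaced by this weaker $\varepsilon r$-loss hypothesis (with a correspondingly weakened conclusion $\dim^{A,B,e}(p_e x_\varepsilon)\geq \dim^A(x_\varepsilon)/2-O(\varepsilon)$), yields
\begin{equation*}
    \dim^{B}(p_e x_\varepsilon)\;\geq\;\dim^{A,B,e}(p_e x_\varepsilon)\;\geq\;\frac{\alpha}{2}-O(\varepsilon).
\end{equation*}
Sending $\varepsilon\to 0$ gives $\dim_H(p_e E)\geq \alpha/2$, as desired.

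The main obstacle is justifying that Proposition \ref{prop:effThmBourgain} indeed tolerates the weaker $\varepsilon r$-loss assumption at the cost of only an additive $O(\varepsilon)$ penalty. Inspecting the proof of that proposition, condition (2) is used in exactly two places: in the opening symmetry-of-information step bounding $K^A_{s,r}(e\mid x)$ below by $K^A_s(e)-O(\log r)$ (which becomes $K^A_s(e)-\varepsilon r-O(\log r)$), and in the closing symmetry-of-information computation relating $K^{A,B,e}_r(p_e x)$ to $K^A_r(x)$ (which incurs an additional $-\varepsilon r$ term). The yellow/teal partitioning of $[\log r,r]$ via Lemma 17 of \cite{Stull22c} and the applications of Lemmas \ref{lem:projectionYellowTeal} and \ref{lem:alternateProjectionYellowTeal} are unchanged, since these depend on the complexity profile of $e$ rather than on condition (2) directly. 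Both sources of additional $\varepsilon r$ error remain $o(r)$ after the liminf, producing the additive $O(\varepsilon)$ penalty that vanishes as $\varepsilon\to 0$. This adaptation is what distinguishes the present theorem from Proposition \ref{prop:OOBourgain}: a single $\mathcal{D}_0$ suffices because analyticity lets us drive $\varepsilon$ to zero after the fact, whereas a generic set with optimal oracles requires the weakened hypothesis to be fixed in advance.
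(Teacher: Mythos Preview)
Your argument has a genuine gap. You claim that the applications of Lemmas~\ref{lem:projectionYellowTeal} and~\ref{lem:alternateProjectionYellowTeal} ``are unchanged, since these depend on the complexity profile of $e$ rather than on condition (2) directly.'' This is not correct: the hypotheses of those lemmas are stated in terms of $K^A_{s,b}(e\mid x)$, not $K^A_s(e)$, and the only way to pass from the latter to the former is precisely the symmetry-of-information step you acknowledge now carries the extra $-\varepsilon r$. So the lemmas, as stated, no longer apply. If you trace the proof of Lemma~\ref{lem:pseudoYellow} with hypothesis $K^A_{s,b}(e\mid x)\geq \sigma s - c\log b - \varepsilon r$, the bound on $b-s$ in the enumeration argument degrades by $O(\varepsilon r/\sigma)$, so each interval contributes an additional $O(\varepsilon r)$ error. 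In the first case of Proposition~\ref{prop:effThmBourgain}, where $r\in[r_n,4n^2r_n^2]$, the partition has $M\approx r/r_n$ intervals, and the accumulated error is $O\bigl((r/r_n)\varepsilon r\bigr)$; dividing by $r$ gives $O((r/r_n)\varepsilon)$, which for $r$ near $4n^2 r_n^2$ is $O(n^2 r_n\,\varepsilon)$ and tends to infinity with $n$. Thus the liminf bound $\dim^A(x)/2 - O(\varepsilon)$ simply does not follow.

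The paper's own remark immediately following the theorem makes exactly this point: with the $\varepsilon r$-loss hypothesis, ``the low complexity intervals of $e$ need to be shorter,'' so the argument succeeds only for $e\in\mathcal{D}_\varepsilon(A)$, not $e\in\mathcal{D}_0(A)$; this is why Proposition~\ref{prop:OOBourgain} gives only dimension-$\varepsilon$ universal sets, and the paper leaves open whether $\mathcal{D}_0$ is Bourgain universal for all sets with optimal oracles. The actual proof instead exploits the analytic hypothesis more sharply: by inner regularity one passes to compact $E_i\subseteq E$ with $0<\mathcal{H}^{s-1/i}(E_i)<\infty$, chooses $A$ so that each $E_i$ is effectively compact and $\mu_i=\mathcal{H}^{s-1/i}\vert_{E_i}$ is computable relative to $A$, and then invokes a lemma (Lemma~36 of \cite{Stull22c}) to find, for $\mu_i$-almost every $x$, the genuine bound $K^{A,e}_r(x)\geq K^A_r(x)-8\log r$. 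This recovers condition~(2) of Proposition~\ref{prop:effThmBourgain} with the tight logarithmic error, and the proposition applies as written.
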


\begin{proof}
    Let $E$ be an analytic set of Hausdorff dimension $s$. Since Hausdorff measures are inner regular for analytic sets, there is a sequence $E_i\subseteq E$ of compact sets such that $\dim_H(E_i) = s-\frac{1}{i}$ and $0<\mathcal{H}^{s-\frac{1}{i}}(E_i)<\infty$. Let $A_i$ be an oracle relative to which $E_i$ is effectively compact and relative to which he measure $\mu_i = \mathcal{H}^{s-\frac{1}{i}}\vert_{E_i}$ is computable. Let $A$ denote the join of these oracles, observing that every $E_i$ is effectively compact relative to $A$. Using Lemma \ref{lem:propertiesOfD0}, choose $e \in \mathcal{D}_0$ to be a direction such that $e \in \mathcal{D}_0(A)$. Note that by \cite{LutStu24}, $(A, e)$ is a Hausdorff oracle for $p_e E_i$, so by the point to set principle, for any $x\in E_i$
    \begin{equation}\label{eq:BourgainReductionBound}
        \dimH(p_e E)> \dim_H(p_e E_i) \geq \dim^{A,e}(p_e x)
    \end{equation}
     To complete the proof, we must lower bound the last term using Proposition \ref{prop:effThmBourgain}. To this end, fix an $i\in\N$. In our application, we do not need $B$ to contain any extra information, so it can be taken to be any oracle computable from $A$ and $e$. By our choice of $e$, the first condition holds, so it remains to show that there exists some $x\in E_i$ and $c$ such that the following inequality holds when $r$ is sufficiently large:
\begin{equation*}
K_r^{A, e}(x)\geq K_r^A(x) - c\log r
\end{equation*}
The smaller error term than in our other effective theorems implies we will need to use somewhat more sophisticated machinery to find such an $x$. We will refine $E_i$, first letting
\begin{equation*}
N_i = \{x\in E_i: \exists^\infty r\in \mathbb{N} \text{ satisfying } K^{A, e}(x) < K^A(x) - 8 \log r\}
\end{equation*}
By Lemma 36 of \cite{Stull22c}, since $E_i$ is effectively compact relative to $A$ and $\mu_i$ is computable relative to $A$, this set has $H^{s-\frac{1}{i}}$ measure 0. Hence, for any $x\in E_i \setminus N_i$, the conditions to apply Proposition \ref{prop:effThmBourgain} are satisfied. Finally, note that since $E_i$ has positive $\mathcal{H}^{s - \frac{1}{i}}$-measure, $E_i \setminus N_i$ has Hausdorff dimension $s-\frac{1}{i}$. Hence, by the point-to-set principle, we can choose $x\in E_i\setminus N_i$ to have effective Hausdorff dimension relative to $A$ at least $s - \frac{2}{i}$. Applying Proposition \ref{prop:effThmBourgain} and \eqref{eq:BourgainReductionBound} gives
\begin{align*}
 \dimH(p_e E)&> \dim^{A,e}(p_e x)\\
 &\geq \frac{\dim^{A}(x)}{2}\\
 &= \frac{s}{2} - \frac{1}{i}
 \end{align*}
 Since $i$ was arbitrary, this implies
 \begin{equation}
 \dimH(p_e E)\geq \frac{\dimH(E)}{2}
 \end{equation}
which completes the proof.

\end{proof}

\begin{remark}
    Proposition \ref{prop:OOBourgain} follows from a very similar argument. In the modified proof, one would use the optimal oracles property to establish a weaker version of the second condition in the effective theorem, namely
    \begin{equation*}
        K^{A,B, e}_r(x) \geq K^A_r(x) - \ve r
    \end{equation*}
    This would in turn imply
    \begin{equation*}
        K_{s, b}^A(e\mid x)\geq K^A_s(e) - \ve r -  O(\log r).   
    \end{equation*}
    The worse error term above implies the low complexity intervals of $e$ need to be shorter to apply Lemma \ref{lem:projectionYellowTeal} and consequently obtain the bound of $\dim^A(p_e x)\geq \frac{\dim^A(x)}{2}$ with our proof strategy. Therefore this approaches gives the desired effective projection theorem if $e\in \mathcal{D}_\ve(A)$ but not if $e\in \mathcal{D}_0(A)$. Finally, when applying the effective theorem, let $B$ be a Hausdorff oracle for $p_e E$. We consider it an interesting problem to either generalize Theorem \ref{thm:BourgainUniversal} to the class of sets with optimal oracles, or show that it fails for this larger class. 
\end{remark}

\section{Sets with optimal oracles}

In this section we show that Kaufman's exceptional set estimate generalizes to sets with optimal oracles (defined precisely at the end of Section 2). As described in the introduction, this immediately implies the existence of small universal sets for the class $\mathcal{C}_{OO}$.
\begin{thm}\label{thm:exceptionalsetOptimalOracles}
    Let $E\subseteq\R^2$ be a set with optimal oracles. Let $0 < s \leq \min\{\dim_H(E), 1\}$. Then,
    \begin{center}
        $\dim_H(\{e\in\mathcal{S}^1\mid \dim_H(p_e E) < s\}) \leq s$.
    \end{center}
\end{thm}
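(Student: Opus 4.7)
\begin{proofof}{proposal}
The plan is to reduce the theorem to a pointwise effective statement via the point-to-set principle, and then apply Theorem \ref{thm:LutStu22} together with the defining properties of an optimal oracle. Let $D_s = \{e \in \mathcal{S}^1 \mid \dim_H(p_e E) < s\}$ denote the exceptional set. By Theorem \ref{thm:pointToSet}, it suffices to exhibit a single oracle $A$ such that $\dim^A(e) \leq s$ for every $e \in D_s$. I would take $A$ to be an optimal oracle for $E$, which exists by hypothesis.

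The strategy is then to prove the contrapositive: if $e \in \mathcal{S}^1$ satisfies $\dim^A(e) > s$, then $\dim_H(p_e E) \geq s$, so $e \notin D_s$. Fix such an $e$ and let $B$ be a Hausdorff oracle for $p_e E$. For any $\varepsilon > 0$, optimality of $A$, applied to the oracle $(B,e)$, furnishes a point $x \in E$ such that $\dim^{A,B,e}(x) > \dim_H(E) - \varepsilon \geq s - \varepsilon$ and
\[
    K^{A,B,e}_r(x) \geq K^A_r(x) - \varepsilon r
\]
for all sufficiently large $r$. This is precisely condition (2) of Theorem \ref{thm:LutStu22}.

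To verify condition (1), namely $K^A_t(e) \geq st - \varepsilon r$ for every $t \leq r$, I would use that $\dim^A(e) > s$ gives some $\delta > 0$ and $t_0 \in \mathbb{N}$ with $K^A_t(e) \geq (s + \delta/2)\,t$ for all $t \geq t_0$. For $t < t_0$ one has $K^A_t(e) \geq 0 \geq st - \varepsilon r$ as soon as $r \geq s t_0/\varepsilon$, so condition (1) holds for all sufficiently large $r$. Applying Theorem \ref{thm:LutStu22} then yields
\[
    K^{A,B,e}_r(p_e x) \geq \min\{\dim^A(x), s\}\, r - 10\varepsilon^{1/2} r - O(\log r).
\]
Because adding oracles cannot increase complexity, $\dim^A(x) \geq \dim^{A,B,e}(x) > s - \varepsilon$, so $\min\{\dim^A(x), s\} > s - \varepsilon$. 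Taking the liminf gives $\dim^{A,B,e}(p_e x) \geq s - \varepsilon - 10\varepsilon^{1/2}$, and since complexity is monotone in oracles, $\dim^B(p_e x) \geq \dim^{A,B,e}(p_e x)$. As $B$ is a Hausdorff oracle for $p_e E$ and $p_e x \in p_e E$, the point-to-set principle gives $\dim_H(p_e E) \geq s - \varepsilon - 10\varepsilon^{1/2}$. Letting $\varepsilon \to 0$ yields $\dim_H(p_e E) \geq s$, so $e \notin D_s$.

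The main obstacle I expect is the mismatch between the liminf nature of effective dimension and the "at every precision" hypothesis of Theorem \ref{thm:LutStu22}. The absorption argument sketched above handles this cleanly provided one is willing to restrict to $r$ larger than some threshold depending on $e$ and $\varepsilon$, which is harmless for a liminf conclusion; the only subtlety is to make sure the threshold does not interact badly with the choice of $x$ coming from optimality, which it does not since the optimality clause is also only required for large $r$. Apart from this bookkeeping, every other step is a direct application of the machinery already developed in the paper.
\end{proofof}
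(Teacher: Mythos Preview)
Your proposal is correct and follows essentially the same approach as the paper: both combine the point-to-set principle, the defining property of an optimal oracle, and Theorem \ref{thm:LutStu22}. The only organizational difference is that the paper argues by contradiction (assuming $\dim_H(D_s(E))>s$, joining with a Hausdorff oracle $A_2$ for $D_s(E)$, and picking a single $e$ of high complexity), whereas you argue directly that the optimal oracle $A$ itself witnesses $\dim^A(e)\leq s$ for every $e\in D_s$; your framing avoids the auxiliary oracle $A_2$ and is slightly cleaner, but the underlying mechanism is identical.
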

\begin{proof}
    We may assume that $s < 1$, since the claim is trivially true otherwise. Let $A_1$ be an optimal oracle for $E$. Suppose that the conclusion is false, i.e., $\dim_H(D_s(E)) > s$, where
    \begin{center}
        $D_s(E) := \{e\in\mathcal{S}^1\mid \dim_H(p_e E) < s\}$.
    \end{center}
    Let $A_2\subseteq\N$ be a Hausdorff oracle for $D_s(E)$. Let $e \in D_s(E)$ such that $\dim^{A_1,A_2}(e) > s$. Let $A_3$ be a Hausdorff oracle for $p_e E$, and let $A = (A_1, A_2, A_3)$. It suffices to show that 
    \begin{equation}
        \sup\limits_{x\in E}\dim^A(p_e x) \geq s.
    \end{equation}
    
    Since $A_1$ is an optimal oracle for $E$, there is a point $x\in E$ such that $\dim^A(x) > \dim_H(E) - \ve$ and 
    \begin{center}
        $K^{A, e}_r(x) > K^{A_1}_r(x) - \ve r$.
    \end{center}
    It is easy to verify that the conditions of Theorem \ref{thm:LutStu22} hold, and so we have
    \begin{center}
        $\dim^{A,e}(p_e x) \geq s - \ve$,
    \end{center}
    and the conclusion follows.
\end{proof}

As a consequence, we can prove the existence of small universal sets of the class of sets with optimal oracles. For every $0 < s \leq 1$, let $\mathcal{C}_{OO}(s)$ be the class of subsets of $\R^2$ with optimal oracles. We will use Theorem \ref{thm:exceptionalsetOptimalOracles} to prove that any essentially $s$-dimensional set of directions is universal for $\mathcal{C}_{OO}$. 

Let $n \geq 1$, and $s \in (0, n]$. We say that $E\subseteq \R^n$ is \textbf{essentially $s$-dimensional} if $\dim_H(E) = s$ and $E$ cannot be written as the countable union of sets $\{E_i\}$ such that $E_i \subseteq E$ and $\dim_H(E_i) < s$.

\begin{cor}
    The following are true.
    \begin{enumerate}
        \item If $\mathcal{D}\subseteq\mathcal{S}^1$ is an essentially $1$-dimensional set of directions, then $\mathcal{D}$ is universal for $\mathcal{C}_{OO}$.
        \item If $\mathcal{D}\subseteq\mathcal{S}^1$ is an essentially $s$-dimensional set of directions, then $\mathcal{D}$ is universal for $\mathcal{C}_{OO}(s)$.
    \end{enumerate}
\end{cor}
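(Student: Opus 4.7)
\begin{proofof}{Proposal for the Corollary}
The plan is to mimic the argument given in the introduction (immediately after Kaufman's theorem) that every essentially $1$-dimensional set of directions is universal for analytic sets, but to use Theorem~\ref{thm:exceptionalsetOptimalOracles} in place of Kaufman's estimate. Both parts will follow from the same basic observation: the full exceptional set of a set with optimal oracles is a countable union of sets of dimension strictly less than the target dimension.

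First I would fix $E$ in the relevant class and set $d = \min\{\dim_H(E), 1\}$. The ``total'' exceptional set for $E$ is
\begin{equation*}
    D(E) \;=\; \{ e \in \mathcal{S}^1 \mid \dim_H(p_e E) < d \} \;=\; \bigcup_{i \in \N} D_{d - 1/i}(E),
\end{equation*}
since $\dim_H(p_e E) < d$ iff $\dim_H(p_e E) < d - 1/i$ for some $i$. For each $i$, Theorem~\ref{thm:exceptionalsetOptimalOracles} (applied with $s = d - 1/i \leq \min\{\dim_H(E),1\}$) gives $\dim_H(D_{d-1/i}(E)) \leq d - 1/i$. Hence $D(E)$ is a countable union of sets each of Hausdorff dimension strictly less than $d$.

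For part~(1), any $\mathcal{D} \subseteq \mathcal{S}^1$ that is essentially $1$-dimensional has $\dim_H(\mathcal{D}) = 1$ and cannot be written as a countable union of sets of dimension $< 1$. Since $d \leq 1$, the decomposition above exhibits $D(E)$ as a countable union of sets of dimension $< d \leq 1$, so $\mathcal{D} \not\subseteq D(E)$. Pick any $e \in \mathcal{D} \setminus D(E)$; then $\dim_H(p_e E) \geq d$, and since projections never increase Hausdorff dimension, $\dim_H(p_e E) = d = \min\{\dim_H(E), 1\}$, as required. Part~(2) is essentially identical: if $E \in \mathcal{C}_{OO}(s)$ then $d \leq s$, so $D(E)$ is a countable union of sets of dimension $< d \leq s$, which an essentially $s$-dimensional $\mathcal{D}$ cannot be contained in. Again choosing $e \in \mathcal{D} \setminus D(E)$ completes the proof.

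There is no real obstacle here beyond bookkeeping; the only subtlety is to notice that one must pass to the countable union $\bigcup_i D_{d-1/i}(E)$ (rather than working with $D_d(E)$ directly) so that each piece has dimension strictly less than $d$, which is exactly what the notion of essential dimension is designed to exploit.
\end{proofof}
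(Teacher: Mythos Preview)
Your proposal is correct and follows essentially the same approach as the paper: decompose the full exceptional set of $E$ as the countable union $\bigcup_i D_{d-1/i}(E)$, apply Theorem~\ref{thm:exceptionalsetOptimalOracles} to bound each piece strictly below $d\leq s$, and conclude that an essentially $s$-dimensional $\mathcal{D}$ cannot be contained in it. The paper phrases this as a proof by contradiction while you argue directly, but the content is identical.
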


\begin{proof}
    We prove (2), the proof of (1) is nearly identical. Let $0 < s\leq 1$. Let $\mathcal{D}\subseteq\mathcal{S}^1$ be an essentially $s$-dimensional set of directions. Suppose that $\mathcal{D}$ is not universal for $\mathcal{C}_{OO}$, so there is a set $E\in\mathcal{C}_{OO}$ such that $\dim_H(p_e E) < \min\{s, \dim_H(E)\}$ for every $e\in \mathcal{D}$. Let $s^\prime := \min\{s, \dim_H(E)\}$. Then, 
    \begin{align*}
        \mathcal{D} &\subseteq \bigcup\limits_{i\in\N}\{e\in\mathcal{S}^1\mid \dim_H(p_e E) < s^\prime - 1/i\},
    \end{align*}
    and we have a contradiction by Theorem \ref{thm:exceptionalsetOptimalOracles}.
\end{proof}

\bibliographystyle{amsplain}
\bibliography{pdss}

 \end{document}